\DeclareMathAlphabet{\mathcalligra}{T1}{calligra}{m}{n}
\newcommand{\NN}{\nonumber}
\newcommand{\eps}{{\varepsilon}}        
\renewcommand{\phi}{{\varphi}}          
\newcommand{\om}{\omega}
\newcommand{\cB}{\mathcal{B}}
\newcommand{\cC}{\mathcal{C}}
\newcommand{\cE}{\mathcal{E}}
\newcommand{\cO}{\mathcal{O}}
\newcommand{\C}{\mathds{C}}
\newcommand{\R}{\mathds{R}}
\newcommand{\N}{\mathds{N}}
\newcommand{\m}{\mathfrak{m}}
\newcommand{\curlym}{\mbox{\Large\( \mathcalligra{m}\)}\mkern 1mu}
\newcommand{\re}{{\mathrm e}}
\newcommand{\ri}{{\mathrm i}}
\newcommand{\dx}{{\mathrm d}}
\newcommand{\I}{\mathds{1}}
\newcommand{\E}{\mathds{E}}
\renewcommand{\P}{\mathds{P}}
\newcommand{\Cov}{\mathrm{Cov}}
\newcommand{\Var}{\mathrm{Var}}
\newcommand{\diag}[1]{\mathrm{diag}(#1)}
\newcommand{\dvec}[1]{\mathrm{dvec}(#1)}
\newcommand{\dist}{\mathop{\mathrm{dist}}}
\newcommand{\Tr}{\mathop{\mathrm{Tr}}}
\newcommand{\Id}{\mathrm{Id}}
\newcommand{\NCA}{\smash{\overrightarrow{NCP}}}
\newtheorem{theorem}{Theorem}[section]         
\newtheorem{lemma}[theorem]{Lemma}             
\newtheorem{corollary}[theorem]{Corollary}
\newtheorem{definition}[theorem]{Definition}
\newtheorem{assumption}[theorem]{Assumption}
\newtheoremstyle{myrem}
  {}
  {}
  {}
  {}
  {\bfseries}
  {.}
  { }
  {\thmname{#1}\thmnumber{ #2}\thmnote{\normalfont{ (#3)}}}
\theoremstyle{myrem}
\newtheorem*{remark}{Remark}
\newtheorem{example}[theorem]{Example}
\numberwithin{equation}{section} 
\begin{document}
\title{\vspace{-2cm} Multi-Point Functional Central Limit Theorem for Wigner Matrices}
\author{Jana Reker\thanks{IST Austria, Am Campus 1, 3400 Klosterneuburg, Austria. E-Mail: jana.reker$@$ist.ac.at.}}
\maketitle

\begin{abstract}
Consider the random variable~$\mathrm{Tr}( f_1(W)A_1\dots f_k(W)A_k)$ where $W$ is an $N\times N$ Hermitian Wigner matrix, $k\in\mathds{N}$, and choose (possibly $N$-dependent) regular functions~$f_1,\dots, f_k$ as well as bounded deterministic matrices~$A_1,\dots,A_k$. We give a functional central limit theorem showing that the fluctuations around the expectation are Gaussian. Moreover, we determine the limiting covariance structure and give explicit error bounds in terms of the scaling of $f_1,\dots,f_k$ and the number of traceless matrices among $A_1,\dots,A_k$, thus extending the results of~\cite{CES-functCLT} to products of arbitrary length $k\geq2$. As an application, we consider the fluctuation of $\mathrm{Tr}(\re^{\ri tW}A_1\re^{-\ri tW}A_2)$ around its thermal value $\Tr(A_1)\Tr(A_2)$ when $t$ is large and give an explicit formula for the variance.
\end{abstract}

\textbf{AMS Subject Classification (2020):} 60B20, 15B52.\\
\textbf{Keywords:} Wigner Matrix, Central Limit Theorem, Fluctuations, Thermalization.

\section{Introduction}\label{sect-intro}		

The eigenvalues $\{\lambda_j\}_{j=1}^N$ of a large $N\times N$ Hermitian random matrix $W$ constitute a strongly correlated system of random points on the real line. Due to the strong dependence, classical central limit theorems (CLTs) aimed at independent or weakly dependent random variables do not apply. However, the linear statistics $\Tr f(W)=\sum_{j=1}^Nf(\lambda_j)$ with a regular test function $f:\R\rightarrow\R$ have a variance of order one (see~\cite{KhorunzhyKhoruzhenkoPastur1995}) and, in fact, satisfy a central limit theorem with a Gaussian limit, as shown, e.g., in~\cite{KhorunzhyKhoruzhenkoPastur1996} for the Wigner case and in~\cite{Johansson1998} for invariant ensembles, see also \cite{SinaiSoshnikov1998A, SinaiSoshnikov1998B}. Remarkably, the effect of the dependent random variables only manifests in the anomalous scaling, and removing the classical $N^{-1/2}$ prefactor fully compensates for the correlations. We emphasize that this question is well-studied for Wigner matrices, see, e.g.,~\cite{Guionnet2002, BaiYao2005, LytovaPasturCLT, Shcherbina2011, SosoeWong2013, BaoHe2021, LandonSosoe2022} and that recent work by Diaz and Mingo~\cite{DiazMingo2022} establishes a CLT for a large class of random matrix models and expresses the limiting covariance structure in terms of a Fréchet integral.

\medskip
Note that the information obtained from a CLT is twofold: It characterizes the fluctuations of the linear statistics around its mean as Gaussian and simultaneously identifies the limiting variance or, more generally, the limiting covariance structure. To generalize the CLT for $\sum_jf(\lambda_j)$, the linear statistics can be modified in different ways. First, one may replace the $N$-independent function $f$ by a function of the build
\begin{equation}\label{eq-mesofunct}
f(x)=g(N^\gamma(x-E))
\end{equation}
where $g$ is a regular $N$-independent function, $E\in\R$ lies in the limiting spectrum of $W$, and $N^{-\gamma}$ is larger than the typical eigenvalue spacing around $E$. Considering the linear statistics for a function $f$ that is concentrated around a value $E$ on a mesoscopic scale allows us to zoom into the spectrum and thus study the problem locally. For Wigner matrices, this problem was studied by He and Knowles in \cite{HeKnowles2017, He2019, HeKnowles2020}, yielding a tracial CLT for the bulk spectrum that spans the entire mesoscopic regime. Similar questions have also been studied for other models, including deformed Wigner matrices \cite{JiLee2020, LiSchnelliXu2021}, generalized Wigner~\cite{LiXu2020}, and Wigner-type~\cite{Vova2023} matrices, sample covariance matrices \cite{BaiSilverstein2004, LiSchnelliXu2021}, Haar distributed random matrices on the classical compact groups \cite{Silverstein1990, Silverstein2022, Soshnikov2000}, $\beta$-ensembles~\cite{BorotGuionnet2013,  Shcherbina2013, BekermanLodhia2018, BekermanLebleSerfaty2018, LambertLedouxWebb2019, BourgardeModyPain2022}, free sums \cite{BaoSchnelliXu2022}, and non-Hermitian random matrices~\cite{CES-nonHermCLT, ErdoesJi2021}. See also~\cite{CES-functCLT} and references therein for a discussion of further examples and previous results.

\medskip
The second generalization addresses that $\sum_{j=1}^Nf(\lambda_j)$ is inherently \textit{tracial}, i.e., the statistics only involve the eigenvalues of the random matrix, but not its eigenvectors. By testing $f(W)$ against a bounded \textit{deterministic} matrix $A$ with $\|A\|\leq1$, i.e., by modifying the centered statistics to the form
\begin{equation}\label{eq-linearstatmatrix}
\Tr[f(W)A]-\E\Tr[f(W)A]=\sum_{j=1}^Nf(\lambda_j)\langle\mathbf{u}_j,A\mathbf{u}_j\rangle-\E[\dots],
\end{equation}
the normalized eigenvectors $\mathbf{u}_1,\dots,\mathbf{u}_N$ of $W$ enter into the problem. In the Wigner case, Lytova~\cite{Lytova2013} obtained a CLT for~\eqref{eq-linearstatmatrix} on macroscopic scales including an explicit formula for the limiting variance. We refer to the CLTs that also involve eigenvectors as \textit{functional} in contrast to the tracial CLTs above. The recent paper~\cite{CES-functCLT} extended these results to all mesoscopic scales and further established that decomposing the matrix $A$ in~\eqref{eq-linearstatmatrix} according to
\begin{displaymath}
A=\langle A\rangle\Id+\mathring{A}_d+\mathring{A}_{od},\quad \langle A\rangle:=\frac{1}{N}\Tr A,
\end{displaymath}
gives rise to three asymptotically independent fluctuation modes. Here, $\Id$ denotes the identity matrix, and $\mathring{A}_d$ and $\mathring{A}_{od}$ denote the diagonal and off-diagonal components of $\smash{\mathring{A}=A-\langle A\rangle\Id}$, the traceless part of $A$, respectively. Moreover, the results in~\cite{CES-functCLT} show that the modes corresponding to the tracial and traceless part of $A$ fluctuate on different scales in the mesoscopic regime and two modes of the build~\eqref{eq-linearstatmatrix} are asymptotically independent if the involved functions live on different scales.

\medskip
In this work, we study a third generalization of the original linear statistics $\sum_{j=1}^Nf(\lambda_j)$, which extends~\eqref{eq-linearstatmatrix}  from involving one (possibly $N$-dependent, mesoscopically scaled) function of $W$ and one (possibly traceless) bounded deterministic matrix to alternating products involving $k\in\N$  functions  and bounded deterministic matrices, respectively. More precisely, we consider the fluctuation of the statistics
\begin{equation}\label{eq-defYsimple}
Y:=\langle f_1(W)A_1f_2(W)A_2\dots f_k(W)A_k\rangle-\E\langle f_1(W)A_1f_2(W)A_2\dots f_k(W)A_k\rangle,
\end{equation}
show that $Y$ satisfies a CLT with a Gaussian limit and give the limiting covariance structure as well as explicit error estimates. This generalizes~\cite[Thm.~2.4]{CES-functCLT} to arbitrary $k\geq1$. We refer to the result as a \emph{multi-point functional CLT}. Similar to the results in~\cite{CES-functCLT}, we further verify that two modes are asymptotically independent if the functions $f_j$ are rescaled to different scales or around different numbers $E_j$ via~\eqref{eq-mesofunct}. However, while the $k=1$ case only allows for two relevant classes of deterministic matrices (corresponding to the tracial and traceless modes, respectively), considering $k\geq2$ further allows us to pinpoint the size of the limiting covariance explicitly in terms of the lengths of the matrix products and the number of traceless matrices involved. We further find that two modes of the build~\eqref{eq-defYsimple} are asymptotically independent whenever the total number of traceless matrices involved is odd.

\medskip
A key ingredient for studying the fluctuation of~\eqref{eq-defYsimple} is information on the $1/N$ correction to $\E\langle f_1(W)A_1\dots F_k(W)A_k\rangle$, which was included in the error terms of previous results (cf.~\cite[Cor.~2.7]{CES-optimalLL}). Before considering the CLT, we hence give an expansion of the expectation. Note that the leading term of this expansion was already identified in~\cite{CES-thermalization,CES-optimalLL}. As the corresponding local laws are obtained by induction, the limiting object naturally arises through a recursion. The explicit form of the expectation obtained in~\cite{CES-thermalization} from solving the recursion mirrors the combinatorics encountered in (\textit{first-order}) free probability, e.g., for the alternating moments $\E \langle W_1D_1\dots W_kD_k\rangle$ of a finite family of independent Wigner matrices $(W_j )_j$ and a finite family of deterministic matrices $(D_j)_j$ (see, e.g., \cite[Sect. ~4.4]{MSBook}). Note, however, that free probability methods are typically restricted to ($N$-independent) polynomials and often require an independent family of Wigner matrices, while the resolvent approach presented in~\cite{CES-thermalization,CES-optimalLL} applies to a much wider class of functions including resolvents and mesoscopically rescaled Sobolev functions. In a similar spirit, the limiting covariance in our CLT also naturally arises through a recursion which can be solved to obtain an explicit formula. We carry out the necessary combinatorics in the companion paper~\cite{JRcompanion} to show that the parallels to free probability identified in~\cite{CES-thermalization} for the expectation continue to hold for the fluctuations. More precisely, the structure of the limiting covariance in our CLT mirrors the combinatorics in \textit{second-order} free probability theory (see \cite[Ch.~5]{MSBook} and \cite{CollinsMingoSniadySpeicher2007} for an introduction) and, in the special case $f_j(x)=x$, correctly reproduces the structure of the fluctuation moments of Wigner and deterministic matrices that was recently computed in~\cite{MaleMingoPecheSpeicher2020}. To avoid introducing additional notation, we work with the recursive definitions in the present paper and only refer to the formulas in~\cite{JRcompanion} for explicit computations and examples.

\medskip
Lastly, as an application of the functional CLT, we consider the special case $f_j(x)=\re^{\ri t_jx}$ with $t_j\in\R$. Interpreting $W$ as the Hamiltonian of a mean-field quantum system and the deterministic bounded matrix $A$ as an observable, the quantity
\begin{displaymath}
A(t):=\re^{\ri tW}A\re^{-\ri t W}
\end{displaymath}
describes the Heisenberg time evolution of $A$. In this context, applying the CLT for the linear statistics~\eqref{eq-defYsimple} yields information about the fluctuations around the equilibrium in certain thermalization problems. For $k=1$, the main interest lies in a CLT for averages of diagonal eigenvector overlaps $\langle \mathbf{u}_j,A\mathbf{u}_j\rangle$ (see~\cite[Thm.~2.3]{CES-functCLT}) due to their connection to the fluctuations in the eigenstate thermalization hypothesis (see~\cite{Deutsch1991}) which is referred to as quantum unique ergodicity in mathematics (see~\cite{RudnickSarnak1994}, further references can be found in~\cite{CES-ETH}). For $k\geq2$, the statistics in~\eqref{eq-defYsimple} translate to the simultaneous time evolution of different observables in the same quantum system. It is expected that two observables $A_1(t)$ and $A_2$ become \emph{thermalized} for $t\gg1$, i.e., that
\begin{displaymath}
\langle A_1(t)A_2\rangle\approx\langle A_1\rangle\langle A_2\rangle
\end{displaymath}
in the large $t$ regime. More precisely, if both $A_1$ and $A_2$ are traceless we have 
\begin{equation}\label{eq-thermal}
\langle A_1(t)A_2\rangle=\langle A_1A_2\rangle\frac{J_1(2t)^2}{t^2}+\frac{\xi(t)}{N}+\cO\Big(\frac{N^\eps}{N^{3/2}}\Big)
\end{equation}
for any fixed $t\in\R$, where $J_1$ denotes a Bessel function of the first kind and $\xi(t)$ is a centered Gaussian random variable with a $t$-dependent variance. The first term of~\eqref{eq-thermal} was established in the recent paper~\cite{CES-thermalization} in the form of a \emph{law of large numbers}-type result with an effective but non-optimal error bound. Applying our functional CLT for $k=2$ shows that the fluctuations around the thermal value are Gaussian and thus gives the second term of the expansion. Considering asymptotics for $t\gg1$ after letting $N\rightarrow\infty$ further yields an explicit expansion for the variance in the regime that is relevant for thermalization.

\medskip
We conclude this section with a brief overview of the paper. After introducing some commonly used notations, we collect our assumptions on the Wigner matrix $W$ in Assumption~\ref{as-Wigner}. We then briefly recall the optimal multi-resolvent local law~\cite[Thm.~2.5]{CES-optimalLL}, which constitutes one of the key tools for the analysis. The main results of the paper are then given in Section~\ref{sect-results2}. We start by giving a precise expansion of the expectation $\E\langle f_1(W)A_1\dots f_k(W)A_k\rangle$ beyond the leading term (Theorem~\ref{thm-Eexp}). Considering the fluctuations of the statistics in~\eqref{eq-defYsimple}, we then establish a CLT and give an explicit formula for the limiting covariance (Theorem~\ref{thm-functCLT}, Corollary~\ref{cor-covarianceLL}). This is followed by a discussion of the result, including the asymptotics in the mesoscopic regime (Theorem~\ref{thm-bulkasympt}), sufficient conditions for two modes to be asymptotically independent (Corollary~\ref{cor-uncorrelated}) as well as the case of multiple independent Wigner matrices. We conclude Section~\ref{sect-results2} by applying the functional CLT to thermalization problems. In Section~\ref{sect-results1}, we consider the special case of the resolvents $f_j(W):=G(z_j)=(W-z_j)^{-1}$ for some suitable spectral parameters $z_j\in\C$, which provides the key ingredient for the proof of our main results. Here, the first step is introducing a recursively defined set function $\cE[\cdot]$ (Definition~\ref{def-E}), which we then identify as the subleading $\frac{1}{N}$ term of the expectation $\E\langle G(z_1)A_1\dots G(z_k)A_k\rangle$. This added resolution is the main tool in proving the CLT in the case that all functions $f_j$ are resolvents (Theorem~\ref{thm-resolventCLTmeso}). The role of the limiting covariance in the theorem is played by a recursively defined set function $\m[\cdot|\cdot]$ (Definition~\ref{def-M}). Lastly, the proofs are given in Section~\ref{sect-proofs}. To keep the presentation concise, some routine calculations are deferred to the appendix.

\medskip
\textbf{Acknowledgements:} I am very grateful to László Erd\H{o}s for suggesting the topic and many valuable discussions during my work on the project. Partially supported by ERC Advanced Grant "RMTBeyond" No.~101020331.

\subsection{Notation and Conventions}\label{sect-prelim}
We start by introducing some notation used throughout the paper. For two positive quantities $f,g$, we write $f\lesssim g$ and $f\sim g$ whenever there exist (deterministic, $N$-independent) constants $c,C>0$ such that $f\leq Cg$ and $cg\leq f\leq Cg$, respectively. We denote the Hermitian conjugate of a matrix $A$ by $A^*$ and the complex conjugate of a scalar~$z\in\C$ by $\overline{z}$. Moreover, $\|\cdot\|$ denotes the operator norm, $\mathrm{Tr}(\cdot)$ is the usual trace and $\langle\cdot\rangle=N^{-1}\Tr(\cdot)$. We further denote the covariance of two complex random variables $Y_1,Y_2$ by $\Cov(Y_1,Y_2)$ and follow the convention
\begin{displaymath}
\Cov(Y_1,Y_2)=\E(Y_1-\E Y_1)\overline{(Y_2-\E Y_2)},
\end{displaymath}
i.e., the covariance is linear in the first and anti-linear in the second entry. For $k,a,b\in \N$ with $a\leq b$, we set $[k]=\{1,\dots,k\}$ and adopt the interval notation $[a,b]=\{a,a+1,\dots,b\}$. We further write $\langle a,b]$ or $[a,b\rangle$ to indicate that $a$ or $b$ are excluded from the interval, respectively. Ordered sets are denoted by $(\dots)$ instead of $\{\dots\}$.

\medskip
Given a matrix $A\in\C^{N\times N}$, the traceless part of $A$ is denoted by $\mathring{A}:=A-\langle A\rangle \Id$ where $\Id$ denotes the identity matrix. Further, $\mathbf{a}:=\diag{A}$ denotes the diagonal matrix obtained from extracting only the diagonal entries of $A$ and $A_1\odot A_2$ denotes the entry-wise (or Hadamard) product of two matrices $A_1$ and $A_2$. For a Hermitian matrix $W$ and $z_1,\dots,z_k\in\C\setminus\R$, we write the corresponding resolvents as $G_j=G(z_j):=(W-z_j)^{-1}$ and index products of resolvents using the interval notation
\begin{displaymath}
G_{[a,b]}:=G_aG_{a+1}\dots G_b
\end{displaymath}
for $a,b\in\N$ with $a\leq b$. Recalling that angled brackets indicate that an edge point of the interval is excluded, we write $G_{\langle a,b]}$ and $G_{[a,b\rangle}$ to exclude $G_a$ or $G_b$ from the product, respectively. Moreover, $G_{\emptyset}$ is interpreted as zero. Note that this notation differs slightly from~\cite{CES-thermalization,CES-optimalLL}. As we often consider alternating products of resolvents with deterministic matrices $A_1,\dots,A_k$, define $T_j:=G_jA_j$ and apply the same interval notation as above to write
\begin{equation}\label{eq-reschain}
T_{[k]}:=T_1\dots T_k=G_1A_1\dots G_kA_k,\quad T_{[a,b]}:=T_aT_{a+1}\dots T_b.
\end{equation}
Again, angled brackets are used to exclude $T_a$ or $T_b$ from the product, respectively, and $T_{\emptyset}$ is interpreted as zero. We call a product of the type~\eqref{eq-reschain} \emph{resolvent chain} of length $k$.

\medskip
Throughout the paper, we assume $W$ to be an $N\times N$ complex\footnote{The same method applies to the real case with only small modifications. For simplicity of the presentation, we restrict the following analysis to the complex case only.} Wigner matrix satisfying the following assumptions.
\begin{assumption}\label{as-Wigner}
The matrix elements of $W$ are independent up to Hermitian symmetry $\smash{W_{ij}=\overline{W_{ji}}}$ and we assume identical distribution in the sense that there is a centered real random variable $\chi_d$ and a centered complex random variable $\chi_{od}$ such that $\smash{W_{ij}\overset{d}{=}N^{-1/2}\chi_{od}}$ for~${i<j}$ and $\smash{W_{jj}\overset{d}{=}N^{-1/2}\chi_{d}}$, respectively. We further assume that $\E|\chi_{od}|^2=\E\chi_d^2=1$ as well as the existence of all moments of $\chi_d$ and $\chi_{od}$, i.e., there exist constants $C_p>0$ for any $p\in\N$ such that
\begin{displaymath}
\E|\chi_d|^p+\E|\chi_{od}|^p\leq C_p.
\end{displaymath}
Lastly, we assume that the pseudo-variance vanishes, i.e.,
\begin{displaymath}
\sigma:=\E\chi_{od}^2=0.
\end{displaymath}
\end{assumption}
We further introduce the notation
\begin{equation}\label{eq-kappa4}
\kappa_4:=\E|\chi_{od}|^4-2
\end{equation}
for the normalized fourth cumulant of the off-diagonal entries. Note that the notation matches~\cite{CES-functCLT}, however, we restrict the model to complex matrices with vanishing pseudo-variance, i.e., $\E W_{ij}^2=0$ for $i\neq j$, for technical simplicity. The more general model from~\cite{CES-functCLT} is studied for macroscopic scales in the companion paper~\cite{JRcompanion}, and the necessary modifications for an extension to mesoscopic scales are sketched.

\medskip
The eigenvalue density profile of $W$ is described by the semicircle law
\begin{equation}\label{eq-scdensity}
\rho_{sc}(x):=\frac{\sqrt{x^2-4}}{2\pi}\I_{[-2,2]}(x)
\end{equation}
which mainly enters our analysis in the form of its Stieltjes transform
\begin{equation}\label{eq-defm}
m(z):=\int\frac{\rho_{sc}(x)}{z-x}\dx x,\quad z\in\C\setminus\R.
\end{equation}
We remind the reader that $m(z)$ is the unique solution of the Dyson equation
\begin{equation}\label{eq-mselfcon}
-\frac{1}{m(z)}=m(z)+z,\quad \Im z\Im m(z)>0
\end{equation}
and that its derivative satisfies
\begin{equation}\label{eq-mselfconderived}
m'(z)=\frac{m(z)^2}{1-m(z)^2}.
\end{equation}
Given fixed $z_1,\dots,z_k\in\C\setminus\R$, set $m_j=m(z_j)$ and $m_j'=m'(z_j)$, respectively, and let
\begin{equation}\label{eq-defq}
q_{i,j}=\frac{m_im_j}{1-m_im_j},
\end{equation}
possibly setting $q_{j,j}=m_j'$ whenever $i=j$. Moreover, we define the \emph{iterated divided difference} for finite multi-sets $\{z_1,\dots, z_k\}\subset\C\setminus\R$ recursively by
\begin{equation}\label{eq-itdivdif}
m[z_1,\dots,z_k]:=\frac{m[z_2,\dots,z_k]-m[z_1,\dots,z_{k-1}]}{z_k-z_1}
\end{equation}
whenever there are two distinct $z_1\neq z_k$ among $z_1,\dots,z_k$ and otherwise set
\begin{displaymath}
m[\underbrace{z,\dots,z}_{k\text{ times}}]:=\frac{m^{(k-1)}(z)}{(k-1)!}
\end{displaymath}
where $m^{(k-1)}$ is the $(k-1)$-th derivative of the function $m$ in~\eqref{eq-defm}. Note that this is well-defined in the sense that $m[z_1,\dots,z_k]$ is independent of the ordering of the multi-set $\{z_1,\dots,z_k\}$. We abbreviate $m[1,\dots,k]:=m[z_1,\dots,z_k]$ and note that $q_{i,j}$ in~\eqref{eq-defq} coincides with $m[i,j]$.

\subsection{Preliminaries: Multi-Resolvent Local Laws}
Before considering the fluctuations, we briefly recall the optimal multi-resolvent local law~\cite[Thm.~2.5]{CES-optimalLL}, which characterizes the deterministic approximation of $\langle T_{[1,k]}\rangle$. We start by introducing the commonly used definition of stochastic domination.

\begin{definition}[Stochastic domination]
Let
\begin{displaymath}
X=\Big\{X^{(N)}(u)\Big|N\in\N,u\in U^{(N)}\Big\} \ \text{and}\ Y=\Big\{Y^{(N)}(u)\Big|N\in\N,u\in U^{(N)}\Big\}
\end{displaymath}
be two families of non-negative random variables that are indexed by $N$ and possibly some other parameter $u$. We say that $X$ is \emph{stochastically dominated} by $Y$, denoted by $X\prec Y$ or $X=\cO_{\prec}(Y)$, if, for all $\eps,C>0$ we have 
\begin{displaymath}
\sup_{u\in U^{(N)}}\P\Big(X^{(N)}(u)>N^\eps Y^{(N)}(u)\Big)\leq N^{-C}
\end{displaymath}
for large enough $N\geq N_0(\eps,C)$.
\end{definition}

Given $z_1,\dots,z_k\in\C$ and matrices $A_1,\dots,A_k$, we define the set function\footnote{Note that $M_{[k]}$ depends on $(z_j)_{j\in[k]}$ and $(A_j)_{j\in[k]}$, i.e., both the spectral parameters and the deterministic matrices are indexed by the same set. We hence interpret $M_{(\cdot)}$ as a function of the (ordered) index set to match the notation in the following sections.} $M_{[k]}=M_{[1,k]}$ through the recursion
\begin{equation}\label{eq-matrixMrec}
M_{[k]}=m_1\Big(A_1M_{[2,k]}+q_{1,k}\langle A_1M_{[2,k]}\rangle+\sum_{j=2}^{k-1}\langle M_{[1,j]}\rangle\big(M_{[j,k]}+q_{1,k}\langle M_{[j,k]}\rangle\big)\Big)
\end{equation}
with initial condition $M_{\emptyset}=0$. We remark that an explicit (non-recursive) formula for $M_{[k]}$ was derived in~\cite[Thm.~2.6]{CES-thermalization}, however, we will not use it in the present paper. Analogously to~\eqref{eq-matrixMrec}, we may define $M_S$ for any (cyclically) ordered set $S=(s_1,\dots,s_k)$ instead of an interval. In this case, we write
\begin{equation}\label{eq-matrixMordered}
M_S=M_{(s_1,\dots,s_k)}.
\end{equation}
The set function $M_{[k]}$ plays the role of the deterministic approximation of $T_{[1,k\rangle} G_k$ in the following multi-resolvent local laws.

\begin{theorem}[Multi-resolvent local law, {\cite[Thm.~2.5]{CES-optimalLL}}]\label{thm-multiG-LL}
Fix $\zeta>0$ and $k\in \N$. Let $z_1,\dots,z_k\in\C\setminus\R$ with $\max_j|z_j|\leq N^{100}$ and $d:=\min_j\mathrm{dist}(z_j,[-2,2])$, deterministic matrices $A_1,\dots,A_k\in\C^{N\times N}$ with $\|A_i\|\lesssim 1$ such that $a$ out of them are traceless. Set further $\eta_*:=\min_j|\Im z_j|\geq N^{-1+\zeta}$. Recalling that $T_j=G_jA_j$, we have the averaged local law
\begin{equation}\label{eq-multiGaveraged}
\langle T_{[1,k]}\rangle=\langle M_{[k]}A_k\rangle+\cO_{\prec}\Big(\frac{1}{N\eta_*\ \eta_*^{k-a/2-1}}\Big),
\end{equation}
and for $\mathbf{x},\mathbf{y}\in \C^N$ with $\|\mathbf{x}\|,\|\mathbf{y}\|\lesssim 1$ we have the isotropic local law
\begin{equation}\label{eq-multiGisotropic}
\langle \mathbf{x},T_{[1,k\rangle}G_k\mathbf{y}\rangle=\langle\mathbf{x},M_{[k]}\mathbf{y}\rangle+\cO_{\prec}\Big(\frac{1}{\sqrt{N\eta_*}\ \eta_*^{k-a/2-1}}\Big).
\end{equation}
\end{theorem}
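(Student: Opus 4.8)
\section*{Proof proposal for Theorem~\ref{thm-multiG-LL}}

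The statement is the optimal multi-resolvent local law, and I would prove it by the dynamical approach, i.e.\ by combining the characteristic (Ornstein--Uhlenbeck) flow with a Green function comparison argument, organised around a self-improving hierarchy of \emph{master inequalities}. The base case $k=1$ (averaged and isotropic single-resolvent local laws, including the $\sqrt{\eta_*}$-gain for a traceless $A_1$) I take as classical. For general $k$, the plan is to introduce the normalised error functionals
\[
\Psi^{\mathrm{av}}_k := \max \frac{\big|\langle T_{[1,k]} - M_{[k]}A_k\rangle\big|}{1/(N\eta_*\,\eta_*^{k-a/2-1})}, \qquad \Psi^{\mathrm{iso}}_k := \max \frac{\big|\langle \mathbf{x}, (T_{[1,k\rangle}G_k - M_{[k]})\mathbf{y}\rangle\big|}{1/(\sqrt{N\eta_*}\,\eta_*^{k-a/2-1})},
\]
the maxima running over all admissible spectral parameters, bounded deterministic matrices with a prescribed number $a$ of traceless ones, and bounded test vectors, and to prove $\Psi^{\mathrm{av}}_k + \Psi^{\mathrm{iso}}_k \prec 1$ by induction on $k$.

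First I would set up the characteristic flow: replace $W$ by $W_t$, obtained by adding an independent GUE component and rescaling so that the semicircle law is preserved, and flow the spectral parameters $z_j \mapsto z_{j,t}$ along the characteristics $\partial_t z = -m(z) - z/2$ of the associated complex Burgers equation. Itô's formula then produces an evolution equation for $\langle T_{[1,k],t}\rangle$ in which the deterministic term $\langle M_{[k],t}A_k\rangle$ is, to leading order, a fixed point; hence the error $\langle T_{[1,k],t} - M_{[k],t}A_k\rangle$ solves a linear ODE driven by a martingale term plus a forcing term that is quadratic (or higher) in the errors of \emph{shorter} chains. The martingale term is controlled through its quadratic variation, which after the Ward identity $G_t(z)G_t(z)^* = \Im G_t(z)/\eta_t$ decomposes into shorter resolvent chains and therefore comes with a genuine gain in $N\eta_t$; the forcing term is bounded inductively by $\Psi_{<k}$. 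Integrating this ODE up to a stopping time of order $\eta_*$ and exploiting that the added Gaussian component regularises the problem at positive time yields the estimate for $W_t$. The second half is the Green function comparison removing the Gaussian component: a cumulant expansion in the entries of $W$ (equivalently, a continuity argument in $t$), in which each derivative hitting a resolvent either splits off a shorter chain — handled by the induction on $k$ — or reproduces the self-consistent structure, yielding the stability operator $1 - m_i m_j \langle \cdot\rangle$ (and its chain analogue), whose invertibility on the relevant subspaces I would check directly from \eqref{eq-mselfcon}--\eqref{eq-defq}. The $\sqrt{\eta_*}$-gain per traceless matrix is tracked throughout by noting that a chain containing some $\mathring{A}_j$ sandwiched between two resolvents is smaller by $\sqrt{\eta_*}$ than its non-traceless counterpart, which reduces via Cauchy--Schwarz and the Ward identity to the single-resolvent isotropic law applied to vectors $\mathring{A}_j\mathbf{e}_\alpha$.

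I expect the main obstacle to be closing the hierarchy with the \emph{optimal} exponent $k - a/2 - 1$ of $\eta_*$. In both the forcing term of the flow equation and the terms generated by the cumulant expansion, a length-$k$ chain with $a$ traceless matrices splits into two shorter chains whose traceless counts $a_1,a_2$ satisfy $a_1 + a_2 \in \{a, a+1, a+2\}$ (the splitting can manufacture extra traceless factors through the projections $\mathring{(\cdot)}$), and one must verify that the product of the two inductive bounds, together with the combinatorial prefactors and the $N\eta_*$-gain from the Ward estimate, never exceeds the target size — the borderline being the balanced splits. Carrying out this bookkeeping tightly while simultaneously controlling the stability operator uniformly in $z_1,\dots,z_k$ (which may be far apart, so $q_{i,j} = \cO(1)$, or nearly coincident, so $q_{i,j}$ degenerates to $m'_j$) is the technical heart of the argument; the edge regime plays no role here since the hypotheses only demand $\eta_* \geq N^{-1+\zeta}$ and $\max_j|z_j| \leq N^{100}$.
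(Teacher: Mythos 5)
Theorem~\ref{thm-multiG-LL} is not proved in this paper: it is quoted (with attribution in the theorem header) from the reference \cite{CES-optimalLL} and used throughout as a black-box input, so there is no in-paper proof against which to compare your sketch.

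Taken on its own merits, your outline does name the real structural ingredients — a self-improving hierarchy of master inequalities $\Psi^{\mathrm{av}}_k,\Psi^{\mathrm{iso}}_k$ closed by induction on $k$, the Ward identity supplying a $(N\eta_*)^{-1}$ gain through the self-energy/quadratic-variation term, the stability factor $1-m_im_j$, and the bookkeeping of the $\sqrt{\eta_*}$-gain per traceless matrix across all chain splittings, with the balanced splits as the borderline case. Two things to sharpen. First, the traceless gain does not reduce to ``Cauchy--Schwarz plus the single-resolvent isotropic law applied to $\mathring{A}_j\mathbf{e}_\alpha$'': the gain is already encoded deterministically, because the $q_{1,k}\langle\cdot\rangle$ term in the recursion~\eqref{eq-matrixMrec} is killed when the relevant $A$ is traceless (this is exactly where the $\lceil a/2\rceil$ in Lemma~\ref{lem-mbounds} comes from), and the genuinely hard part is showing that this gain for $M$ is inherited by the fluctuation after every reduction step in the cumulant expansion or forcing term — that ``reduction inequality'' is the core content of \cite{CES-optimalLL}, and your sketch treats it as a bookkeeping afterthought. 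Second, the edge does not simply disappear: the hypotheses allow $z_j$ near or outside $[-2,2]$, where $\Im m(z_j)$ degenerates; the cited reference proves refined $\rho$- and $d$-dependent bounds of which~\eqref{eq-multiGaveraged}--\eqref{eq-multiGisotropic} are a deliberately coarsened $\eta_*$-only corollary, and any proof along your lines would still have to control the stability operator uniformly, not only in the bulk.
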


As we frequently encounter $\langle M_{[k]}A_k\rangle$ in the following sections, we introduce the notation
\begin{equation}\label{eq-defM}
\m[T_1,\dots,T_k]=\m[z_1,A_1,\dots,z_k,A_k]:=\langle M_{[k]}A_k\rangle
\end{equation}
and remark that the function $\m[\cdot]$ satisfies a recursion similar to \eqref{eq-matrixMrec}. The arguments in the notation $\m[T_1,\dots,T_k]$ indicate the deterministic approximation of $\langle T_1\dots T_k\rangle$. Whenever $A_1=\dots=A_k=\Id$, it follows that the deterministic approximation is given by the iterated divided differences, i.e.,
\begin{equation}\label{eq-1slotdivdif}
\m[G_1,\dots,G_k]=m[1,\dots,k],
\end{equation}
which can be seen from the resolvent identity
\begin{equation}\label{eq-resolventid}
G_jG_{j-1}=\frac{G_j-G_{j-1}}{z_j-z_{j-1}}
\end{equation}
and the averaged local law~\eqref{eq-multiGaveraged}. Note that~\eqref{eq-multiGaveraged} and~\eqref{eq-multiGisotropic} may also be applied for any product $T_{s_1}\dots T_{s_{k-1}}G_{s_k}$ that is indexed by a (cyclically) ordered set $S=(s_1,\dots,s_k)$ instead of an interval. In this case, the deterministic approximation is given by~\eqref{eq-matrixMordered}.

\medskip
We further note the following a priori bounds for $m[\cdot]$, $\m[\cdot]$, and $M_{[\cdot]}$ (cf.~Lemma~2.4 and Appendix~A of~\cite{CES-optimalLL}).
\begin{lemma}\label{lem-mbounds}
Let $k\in\N$, pick spectral parameters $z_1,\dots,z_k$ and deterministic matrices such that $a$ matrices among $A_1,\dots,A_k$ are traceless. Further, set $\eta_*=\min_j |\Im z_j|$ and assume $d:=\min_j\dist(z_j,[-2,2])\leq1$. Then,
\begin{align*}
|m[1,\dots,k]|&\lesssim\frac{1}{\eta_*^{k-1}},\\
|\m[T_1,\dots,T_k]|&\lesssim \frac{1}{\eta_*^{k-1-\lceil a/2\rceil}},\\
\Big|(M_{[k]})_{ij}\Big|\leq\|M_{[k]}\|&\lesssim\frac{1}{\eta_*^{k-1-\lceil a'/2\rceil}},
\end{align*}
where $a'$ denotes the number of traceless matrices among $A_1,\dots,A_{k-1}$ and $\lceil x\rceil$ denotes the upper integrer part of $x\in\R$. The above bounds are sharp\footnote{The bounds are "sharp" in the sense that they are optimal in the class of bounds involving only $\eta_*$ in the small $\eta_*$ regime, see also~\cite{CES-optimalLL}.} whenever not all $\Im z_j$ have the same sign.
\end{lemma}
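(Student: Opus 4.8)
I would establish the three bounds separately. \textbf{The divided-difference bound.} By induction on $k$ from the recursion \eqref{eq-itdivdif} and the definition \eqref{eq-defm} one has $m[z_1,\dots,z_k]=\int_{-2}^{2}\rho_{sc}(x)\prod_{j=1}^{k}(z_j-x)^{-1}\,\dx x$. For real $x$ we have $|z_j-x|\geq|\Im z_j|\geq\eta_*$, and since $|m(z)|\lesssim1$ uniformly, $\int\rho_{sc}(x)\,|z-x|^{-2}\,\dx x=|\Im m(z)|/|\Im z|\leq|m(z)|/\eta_*\lesssim\eta_*^{-1}$. For $k\geq2$ I would bound $k-2$ of the factors in the denominator by $\eta_*^{-1}$ and estimate the remaining two by Cauchy--Schwarz against $\rho_{sc}$; this yields $|m[1,\dots,k]|\lesssim\eta_*^{-(k-1)}$. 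The case $k=1$ is just $|m(z_1)|\lesssim1$.

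\textbf{The bounds for $M_{[k]}$ and $\m[T_1,\dots,T_k]=\langle M_{[k]}A_k\rangle$.} Here I would \emph{not} estimate the recursion \eqref{eq-matrixMrec} directly, because it contains the resonant scalars $(1-m_im_j)^{-1}=1+q_{i,j}$ of size $\eta_*^{-1}$, and several nested such factors make a term-by-term bound overshoot by powers of $\eta_*$, the true size being recovered only after cancellations. Instead I would invoke the explicit (non-recursive) formula for $M_{[k]}$ from \cite[Thm.~2.6]{CES-thermalization}, which writes $M_{[k]}$ as a finite sum of terms, each a product of iterated divided differences $m[z_{i_1},\dots,z_{i_p}]$ over sub-multisets of the spectral parameters, of normalised traces $\langle A_{j_1}\cdots A_{j_m}\rangle$ of products of sub-families of the matrices, and of a single leftover matrix factor. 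The $\eta_*$-power of each summand is then read off from the divided-difference bound above together with $|m_j|\lesssim1$, $\|A_j\|\lesssim1$; the gain from the number $a$ of traceless matrices comes solely from $\langle\mathring A\rangle=0$, since any summand in which a normalised trace encloses exactly one traceless matrix vanishes, so in a surviving summand the traceless matrices are distributed with at least two in every trace block that carries one. Bookkeeping the number of divided-difference factors forced by this constraint yields the exponent $k-1-\lceil a'/2\rceil$ for $\|M_{[k]}\|$ and, after closing the leftover matrix factor against $A_k$ to form $\m[T_1,\dots,T_k]$, the exponent $k-1-\lceil a/2\rceil$; the entrywise bound $|(M_{[k]})_{ij}|\leq\|M_{[k]}\|$ is immediate. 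The cancellation analysis underlying the reduced formula -- equivalently, the fact that the two-body stability operator $X\mapsto X+q_{i,j}\langle X\rangle\Id$ acts as the identity on traceless $X$ -- is the technical core, carried out in \cite[App.~A]{CES-optimalLL}; I would import it, and I expect this resonance bookkeeping to be the main obstacle.

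\textbf{Sharpness.} When the $\Im z_j$ are not all of one sign, take all $z_j\in\{E+\ri\eta,\,E-\ri\eta\}$ with both signs occurring and $E$ in the bulk. Then $(\ri\eta-x)(-\ri\eta-x)=x^2+\eta^2$, so for $k$ even $m[z_1,\dots,z_k]=\int_{-2}^{2}\rho_{sc}(x)(x^2+\eta^2)^{-k/2}\,\dx x\sim\eta^{-(k-1)}$; more generally $1-m(z_i)m(z_j)\sim\eta$ whenever $\Im z_i$ and $\Im z_j$ differ, so every divided difference in the explicit formula is of maximal size. Choosing the traceless $A_j$ to be suitable diagonal sign patterns, so that the surviving traces $\langle\mathring A_{j_1}\cdots\rangle$ are of order one, and the remaining $A_j$ equal to $\Id$, then saturates all three bounds.
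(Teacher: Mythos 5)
The paper gives no proof of this lemma: it is stated as an a priori bound imported verbatim from Lemma~2.4 and Appendix~A of \cite{CES-optimalLL}, so there is no internal argument to compare against. Your sketch is sound and is essentially the cited argument — the integral representation $m[z_1,\dots,z_k]=(-1)^{k-1}\int\rho_{sc}(x)\prod_j(z_j-x)^{-1}\dx x$ with the Cauchy--Schwarz trick for the first bound, and for $M_{[k]}$ the explicit non-crossing formula of \cite{CES-thermalization} together with the observation that a trace block containing a single traceless matrix vanishes (equivalently, that the resonant $q_{i,j}\langle\cdot\rangle$ terms are killed by tracelessness), which is precisely the bookkeeping carried out in the cited appendix. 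You correctly identify that the recursion \eqref{eq-matrixMrec} cannot be estimated term by term, and your sharpness example is in the right spirit; the only point left implicit is that the passage from the $a'$-exponent for $\|M_{[k]}\|$ to the $a$-exponent for $\langle M_{[k]}A_k\rangle$ again uses the structure of $M_{[k]}$ rather than a norm bound, which you at least gesture at.
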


Theorem~\ref{thm-multiG-LL} together with the optimality of the bounds in Lemma~\ref{lem-mbounds} asserts that the deterministic $M_{[k]}$ is indeed the leading order approximation of $T_{[1,k\rangle}G_k$. In particular, the error terms in~\eqref{eq-multiGaveraged} and~\eqref{eq-multiGisotropic} are smaller than the natural upper bound on their leading term by a factor of $(N\eta_*)^{-1}$ and $(N\eta_*)^{-1/2}$, respectively.

\section{Main Results}\label{sect-results2}

The main result of the present paper is a functional CLT for the centered statistics
\begin{equation}\label{eq-defY}
Y^{(k,a)}_{\alpha}:=\langle f_1(W)A_1\dots f_k(W)A_k\rangle-\E\langle f_1(W)A_1\dots f_k(W)A_k\rangle,
\end{equation}
where $\alpha$ is a multi-index containing the deterministic matrices and test functions involved and $a$ denotes the number of traceless matrices among $A_1,\dots,A_k$. Note that we omit the superscripts of $\smash{Y^{(k,a)}_{\alpha}}$ whenever $a$ or $k$ are not used explicitly. The test functions $f_1,\dots,f_k$ are chosen according to the following set of assumptions.

\begin{assumption}[Test functions]\label{as-functions}
For $k,p\in\N$ let $g_1,\dots,g_k\in H^p_0(\R)$ be ($N$-independent) real-valued compactly supported test functions with $\|g_j\|_{H^p_0}\lesssim1$. Fixing $\delta,\gamma\geq0$ as well as $\gamma_1,\dots,\gamma_k\geq0$ either as
\begin{itemize}
\item[(1)] [Macro] $\delta=\gamma=\gamma_1=\dots=\gamma_k=0$ or as
\item[(2)] [Meso] $\delta>0$, $\gamma\in(0,1)$, and $0< \gamma_j\leq\gamma$,
\end{itemize}
we pick ($N$-independent) reference energies $E_j\in[-2+\delta,2-\delta]$ for $j=1,\dots,k$. Lastly, we define the test function rescaled to a scale $N^{-\gamma_j}$ around $E_j$ by
\begin{equation}\label{eq-testfunct}
f_j(x):=g_j(N^{\gamma_j}(x-E_j)).
\end{equation}
\end{assumption}

Note that Assumption~\ref{as-functions} includes both the \textit{macroscopic} scale (Case 1) and the bulk regime for  \textit{mesoscopic} scales (Case 2). We remark that the restriction to real-valued test functions is only for simplicity. Extending the results in this section to complex-valued test functions only requires minor modifications to the proofs in Section~\ref{sect-proofs}. Moreover, as one can decompose any matrix $A_j$ in $Y_\alpha$ as $A_j=\langle A_j\rangle\Id+\mathring{A}_j$, by multi-linearity, it is sufficient to consider $Y_\alpha$ for deterministic matrices $A_j$ that are either traceless or equal to the identity matrix.

\medskip
Throughout the paper, we denote the multi-index $\alpha$ in the form
\begin{displaymath}
\alpha:=((g_1,\gamma_1,E_1,A_1),\dots,(g_k,\gamma_k,E_k,A_k))
\end{displaymath}
with $g_j$, $\gamma_j$, and $E_j$ chosen following Assumption~\ref{as-functions}. Moreover, we introduce $F_j:=f_j(W)A_j$ and use the interval notation
\begin{displaymath}
F_{[i,j]}:=f_i(W)A_i\dots f_j(W)A_j
\end{displaymath}
for $i<j$ as well as $F_{\emptyset}=0$. Note that $(g_j,\gamma_j,E_j,A_j)$ and $F_j$ contain the same information. For this reason, we will occasionally abuse notation and use both quantities interchangeably.

\medskip
We further introduce the random variables
\begin{equation}\label{def-modes}
X^{(k,a)}_{\alpha}:=\langle T_{[1,k]}\rangle-\E \langle T_{[1,k]}\rangle =\langle G_1A_1\dots G_kA_k\rangle-\E \langle G_1A_1\dots G_kA_k\rangle,
\end{equation}
as a special case of~\eqref{eq-defY}. By a suitable functional calculus (cf.~\cite{Davies1995}), information on~\eqref{def-modes} carries over to the general statistics~\eqref{eq-defY}, thus yielding a key tool for the proof of our main results. We, therefore, consider the analog of the results in Sections~\ref{sect-expectation} and~\ref{sect-functCLT} for the resolvent case separately in Section~\ref{sect-results1}. Throughout the paper, we write
\begin{displaymath}
\alpha=((z_1,A_1),\dots,(z_k,A_k))
\end{displaymath}
for the multi-index in~\eqref{def-modes} containing the spectral parameters $z_1,\dots,z_k$ appearing in the resolvents as well as the deterministic matrices. Whenever we do not need the number $k$ of resolvents (resp. deterministic matrices) in the product or the number $a$ of traceless deterministic matrices among $A_1,\dots,A_k$ explicitly, we again omit the superscript, and further occasionally abuse notation to use $(z_j,A_j)$ and $T_j=G_jA_j$ interchangeably. In the context of~\eqref{eq-testfunct}, we may interpret the resolvent $G(z)$ as a function rescaled to scale $|\Im z|^{-1}$ around $\Re z$ (even though the corresponding function $g$ is not compactly supported). The analog of Assumption~\ref{as-functions} for the spectral parameters now reads as follows.

\begin{assumption}[Spectral parameters]\label{as-spectralmeso}
Let $k\in\N$. Fixing $\delta,\zeta\geq0$ either as
\begin{itemize}
\item[(1)] [Macro] $\delta=\zeta=0$ or as
\item[(2)] [Meso] $\delta>0$ and $\zeta\in(0,1)$,
\end{itemize}
pick ($N$-independent) reference energies $E_j\in[-2+\delta,2-\delta]$. We choose the spectral parameters $z_1,\dots,z_k\in\C$ such that $z_j=E_j+\ri\eta_j$ with $|\eta_j|\gtrsim N^{-1+\zeta}$ and $\max_j|z_j|\leq N^{100}$.
\end{assumption}
Note that we consider spectral parameters $z_j$ for which $|\Im z_j|$ is either of order one (macroscopic scale) or only slightly above the typical eigenvalue spacing (mesoscopic scales). Whenever $|\eta_j|$ is small, we further restrict to the bulk regime, i.e., those $z_j$ for which $\Re z_j$ is bounded away from the boundary of the support of the semicircle density at $\pm2$.

\subsection{The $\frac1N$ Term of $\E\langle f_1(W)A_1\dots f_k(W)A_k\rangle$}\label{sect-expectation}
We start our analysis by considering an expansion of $\E\langle F_{[1,k]}\rangle$ which identifies the subleading $1/N$ term. To state the theorem, we introduce a set function $\cE[\cdot]$ that plays the role of the $1/N$ term for the resolvent case $\E\langle T_{[1,k]}\rangle$. Note that $\cE[\cdot]$ characterizes the error of order $1/N$ that is obtained from interchanging $\langle T_{[1,k]}\rangle-\E\langle T_{[1,k]}\rangle$ and $\langle T_{[1,k]}\rangle-\m[T_1,\dots,T_k]$, i.e., it relates $X_\alpha$ in~\eqref{def-modes} to the bounds in the local law~\eqref{eq-multiGaveraged}. The proof of Lemma~\ref{lem-mEexchangemeso} is carried out in Section~\ref{sect-mEexchangemeso}.

\begin{lemma}\label{lem-mEexchangemeso}
Let $k\in\N$, $W$ be a Wigner matrix satisfying Assumption~\ref{as-Wigner}, and fix spectral parameters $z_1,\dots,z_k$ satisfying  Assumption~\ref{as-spectralmeso} as well as deterministic matrices $A_1,\dots,A_k$ with $\|A_j\|\lesssim1$. Moreover, assume that $a$ matrices among $A_1,\dots,A_k$ are traceless. Then there exists a set function $\cE[\cdot]$ (defined recursively in Definition~\ref{def-E} below) such that
\begin{equation}\label{eq-mEexchangemeso}
\E\langle T_1\dots T_k\rangle=\m[T_1,\dots,T_k]+\frac{\kappa_4}{N}\cE[T_1,\dots,T_k]+\cO\Big(\frac{N^\eps}{N\, \sqrt{N\eta_*}\ \eta_*^{k-a/2}}\Big)
\end{equation}
with $\m[\cdot]$ as in~\eqref{eq-defM}, $\kappa_4$ as in~\eqref{eq-kappa4}, and $\eta_*:=\min_j|\Im z_j|$.
\end{lemma}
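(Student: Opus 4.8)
\textbf{Proof proposal for Lemma~\ref{lem-mEexchangemeso}.}

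The plan is to obtain the expansion by a cumulant expansion of the underlying resolvent identity, tracking terms up to and including order $1/N$ while absorbing everything smaller into the error. First I would write the self-consistent equation for $\langle T_{[1,k]}\rangle = \langle G_1A_1\cdots G_kA_k\rangle$ by using $z_1 G_1 = -\Id + W G_1$ and expanding $\E\langle W G_1 A_1 G_2 A_2\cdots G_k A_k\rangle$ via the cumulant expansion $\E[W_{ab}h(W)] = \sum_{\ell\geq 1}\frac{\kappa_{\ell+1}(ab)}{\ell!}\E[\partial_{ab}^\ell h(W)]$, where $\kappa_{\ell+1}$ are the joint cumulants of the entries of $W$. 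The $\ell=1$ term reproduces (after resumming) the structure encoded in the recursion~\eqref{eq-matrixMrec} for $M_{[k]}$, i.e. the leading term $\m[T_1,\dots,T_k]$ together with the self-energy renormalization; the $\ell=2$ term contributes only at higher order in $1/N$ because the third cumulant is controlled by $\sigma=0$ and the diagonal third cumulant; the $\ell=3$ term is where the fourth cumulant $\kappa_4$ enters and, after the dust settles, produces exactly the $\frac{\kappa_4}{N}\cE[T_1,\dots,T_k]$ contribution; all $\ell\geq 4$ terms are subleading. At each stage I would use the multi-resolvent local laws of Theorem~\ref{thm-multiG-LL} (both the averaged and isotropic versions) and the a priori bounds of Lemma~\ref{lem-mbounds} to replace the random resolvent chains appearing after differentiation by their deterministic approximations $M_{(\cdot)}$, picking up errors of size $(N\eta_*)^{-1/2}$ relative to the naive bound each time such a replacement is made.

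The bookkeeping is organized by induction on $k$. For the base case $k=1$ one has $\langle T_1\rangle = \langle G_1 A_1\rangle$, for which the expansion with the explicit $\kappa_4$ correction is essentially classical (it is the content of the single-resolvent local law with its $1/N$ refinement, cf.~the discussion around~\cite[Cor.~2.7]{CES-optimalLL}); this fixes $\cE[T_1]$. For the inductive step, after the cumulant expansion one is left with shorter or split resolvent chains — products of the form $\langle T_{[1,j]}\rangle\langle T_{[j,k]}\rangle$, $\langle T_{[1,k]}\,(\text{something})\rangle$, and isotropic quantities $(G\cdots G)_{xy}$ contracted against the structure of the derivative — and one applies the local law to each factor, using that the total error one can afford on $\E\langle T_{[1,k]}\rangle$ at this level of precision is $N^\eps N^{-1}(N\eta_*)^{-1/2}\eta_*^{-(k-a/2)}$. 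The crucial point is that the $1/N$ terms produced by the $\ell=1$ part of the expansion that are \emph{not} already captured by $\m[\cdot]$ combine with the inductively defined $\cE$ of the shorter chains to give a closed recursion; this recursion is precisely what I would then \emph{take as the definition} of $\cE[\cdot]$ in Definition~\ref{def-E}. Since $\cE[\cdot]$ is defined by that recursion, the lemma becomes the statement that the cumulant expansion, truncated at the stated precision, reproduces it — so the proof is really a verification that the error terms discarded at each step are of the claimed size.

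The main obstacle is the error tracking in the mesoscopic regime: each resolvent factor in a chain of length $k$ carries a potential $\eta_*^{-1}$, and one must check that every intermediate term — in particular the ones coming from the $\ell=3$ ($\kappa_4$) part where derivatives hit several distinct resolvents and one gets products of two or three separate short chains — obeys the power counting in $\eta_*$ dictated by~\eqref{eq-mEexchangemeso}, namely that the genuine error is down by a full factor $(N\eta_*)^{-1}(N\eta_*)^{-1/2}$ relative to the leading $\eta_*^{-(k-1-\lceil a/2\rceil)}$-type size, and that the $\kappa_4$ term itself sits at exactly order $1/N$ with the right $\eta_*$ power (one power of $\eta_*$ better than the leading term, which is why $\cE$ scales as $\eta_*^{-(k-2-\lceil a/2\rceil)}$, consistent with the $1/N$ prefactor). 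A secondary subtlety is the careful treatment of which matrices are traceless: tracelessness of an $A_j$ improves the relevant $\eta_*$-power by roughly one half through the $\langle\mathring A\, M\rangle$-type cancellations, and one must ensure the induction respects the counting of $a$ correctly when chains are split — in particular that a split $\langle T_{[1,j]}\rangle\langle T_{[j,k]}\rangle$ distributes the traceless matrices so that the product of the two a priori bounds still fits under the target error. I expect the proof of Lemma~\ref{lem-mEexchangemeso} (in Section~\ref{sect-mEexchangemeso}) to spend most of its length on exactly this, with the algebraic identification of the recursion for $\cE$ being comparatively short once the expansion is set up.
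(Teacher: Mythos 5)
Your high-level plan — induction on $k$, cumulant expansion of $\E\langle W T_{[1,k]}\rangle$ via $z_1 G_1 = -\Id + WG_1$, local laws to replace chains by their deterministic approximations, with $\kappa_4$ entering from the fourth-cumulant part and the recursion for $\cE[\cdot]$ being exactly what closes the induction — matches the paper's proof. But there are three concrete gaps.

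First, you never perform the reduction to the case where \emph{all} $A_j$ are traceless, which the paper carries out as its very first step and which keeps the power counting clean. If some $A_j=\Id$, the paper does not run a cumulant expansion at all: it uses $G_jG_{j+1} = (G_j-G_{j+1})/(z_j-z_{j+1})$ (when $\Im z_j$, $\Im z_{j+1}$ have opposite signs) or a contour-integral representation of $G_jG_{j+1}$ (when they have the same sign) to reduce the chain length by one and invoke the induction hypothesis directly, together with the corresponding divided-difference/contour identities for $\m[\cdot]$ and $\cE[\cdot]$. Only after this reduction, i.e.\ when $a=k$, does the cumulant expansion begin. If you skipped this and ran the cumulant expansion for general $a$, you would need to track the interaction of traceless and non-traceless matrices through the splitting of chains, which is substantially messier and not what you describe.

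Second, your reason for dismissing the $\ell=2$ (third-cumulant, $n=2$ in the paper's indexing) term — ``controlled by $\sigma=0$ and the diagonal third cumulant'' — is wrong. The assumption $\sigma=\E\chi_{od}^2=0$ concerns the pseudo-variance, not third cumulants; under Assumption~\ref{as-Wigner} the mixed third cumulants of $W_{xy}$ are in general nonzero. In the paper the $n=2$ contribution is estimated and shown to be in the error by \emph{isotropic resummation}: every term produced by $\partial_\nu(T_{[1,k]})_{yx}$ with $|\nu|=2$ contains at least one off-diagonal entry of a resolvent chain, and decomposing chain entries into $M_{(\cdot)}$ plus fluctuations and using~\eqref{eq-multiGisotropic} together with Cauchy--Schwarz on the remaining $x,y$ sum yields the extra $(N\eta_*)^{-1/2}$ gain. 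Relying on the vanishing of third cumulants is simply incorrect and would leave a hole in the argument.

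Third, your claim that $\cE$ scales as $\eta_*^{-(k-2-\lceil a/2\rceil)}$, one power of $\eta_*$ better than the leading $\m$ term, contradicts Lemma~\ref{lem-Esize}: in fact $|\cE[T_1,\dots,T_k]|\lesssim\eta_*^{-(k-1-\lceil a/2\rceil)}$, i.e.\ the \emph{same} $\eta_*$-scaling as $\m[\cdot]$, and this is sharp for $a$ even and mixed signs of $\Im z_j$. The $\kappa_4/N\cdot\cE$ term is smaller than the leading term by a factor $1/N$, not by a factor $1/(N\eta_*)$. This error would propagate into the power counting you describe in the last paragraph.

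Minor remarks: the paper's base case is the empty chain $\E\langle T_\emptyset\rangle=0$, not $k=1$; the formula for $\cE[T_1]$ is then obtained from the recursion, not postulated. And you do not name the renormalization $\underline{W\cdot}$ that cancels the $n=1$ term, though your ``after resumming'' seems to gesture at it — worth being explicit, since that cancellation is what makes the leading $\ell=1$ part disappear into $\m[\cdot]$ cleanly.
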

We remark that (cf. Lemma~\ref{lem-Esize} below)
\begin{displaymath}
\cE[T_1,\dots,T_k]\lesssim\frac{1}{\eta_*^{k-1-\lceil a/2\rceil}},
\end{displaymath}
i.e., the error term in~\eqref{eq-mEexchangemeso} is indeed smaller than the deterministic leading term. A discussion of  the properties of $\cE[\cdot]$ is included in Section~\ref{sect-mEexchange} below. We now give the expansion of $\E\langle F_{[1,k]}\rangle$.

\begin{theorem}\label{thm-Eexp}
Let $k\in\N$ and pick deterministic matrices $A_1,\dots,A_k\in\C^{N\times N}$ with $\|A_j\|\lesssim1$ such that $a$ out of them are traceless. Let further $W$ be a Wigner matrix satisfying Assumption~\ref{as-Wigner} and let $f_1,\dots,f_k$ be test functions satisfying Assumption~\ref{as-functions} with $p=k-\lfloor a/2\rfloor+1$. Then, for any $\eps>0$, we have the expansion
\begin{align}
\E \langle F_{[1,k]}\rangle&=\int_{\R^k}\int_{[0,10]^{k}}\Big[\prod_{j=1}^k(\partial_{\overline{z}}(f_j)_{\C,p})(z_j)\Big]\m[G(z_1)A_1,\dots,G(z_k)A_k]\dx \eta_{[k]}\dx x_{[k]}\NN\\
&\quad+\frac{\kappa_4}{N\pi^k}\int_{\R^k}\int_{[0,10]^{k}}\Big[\prod_{j=1}^k(\partial_{\overline{z}}(f_j)_{\C,p})(z_j)\Big]\cE[G(z_1)A_1,\dots,G(z_k)A_k]\dx \eta_{[k]}\dx x_{[k]}\NN\\
&\quad+\cO\Big(\frac{N^\eps\max_j\|f_j\|_{H^p}}{N^{3/2}}\Big)\label{eq-Eexpansion}
\end{align}
where we write $z_j=x_j+\ri\eta_j$, $\dx x_{[k]}=\dx x_1\dx x_2\dots,\dx x_k$ as well as $\dx \eta_{[k]}=\dx \eta_1\dx \eta_2\dots\dx \eta_k$, and $(f_j)_{\C,p}$ denotes the almost analytic extension of $f_j$ of order $p$.
\end{theorem}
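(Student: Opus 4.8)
The plan is to reduce the statement about general test functions $f_j$ satisfying Assumption~\ref{as-functions} to the resolvent case handled in Lemma~\ref{lem-mEexchangemeso} via the Helffer--Sj\"ostrand formula. First I would recall that for a sufficiently regular function $f$ one has the representation
\begin{equation}\label{eq-HSplan}
f(W)=\frac{1}{\pi}\int_{\C}(\partial_{\ol z}f_{\C,p})(z)\,G(z)\,\dx^2 z,
\end{equation}
where $f_{\C,p}$ is the almost analytic extension of order $p$, $z=x+\ri\eta$, and $\dx^2 z=\dx x\,\dx\eta$. Inserting this for each of $f_1,\dots,f_k$ into $\langle F_{[1,k]}\rangle$ and using Fubini yields
\begin{displaymath}
\E\langle F_{[1,k]}\rangle=\frac{1}{\pi^k}\int_{\R^{2k}}\Big[\prod_{j=1}^k(\partial_{\ol z}(f_j)_{\C,p})(z_j)\Big]\,\E\langle G(z_1)A_1\dots G(z_k)A_k\rangle\,\dx x_{[k]}\dx\eta_{[k]}.
\end{displaymath}
Then I would substitute the expansion~\eqref{eq-mEexchangemeso} from Lemma~\ref{lem-mEexchangemeso} for the integrand $\E\langle T_{[1,k]}\rangle$, producing exactly the three terms in~\eqref{eq-Eexpansion}: the $\m[\cdot]$ leading term, the $\kappa_4 N^{-1}$ term with $\cE[\cdot]$, and an error to be controlled. (The $\pi^{-k}$ normalization and the restriction of the $\eta$-integral to $[0,10]^k$ — after splitting into $\pm$ half-planes and using that $f_j$ is real so the two halves are complex conjugates, plus the compact support bounding $|\Re z_j|$ and the rapid decay of $\partial_{\ol z}f_{\C,p}$ for large $|\eta|$ — is bookkeeping I would spell out but not belabor.)

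The substantive work is the error analysis, which is where I expect the main obstacle to lie. The error term in~\eqref{eq-mEexchangemeso} is $N^\eps (N\sqrt{N\eta_*}\,\eta_*^{k-a/2})^{-1}$, which blows up as $\eta_*=|\eta_j|\to 0$; since the Helffer--Sj\"ostrand integral runs all the way down to $\eta_j=0$, one cannot simply integrate this bound. The standard remedy — used in \cite{CES-functCLT} and the references therein — is to exploit that the weight $|\partial_{\ol z}(f_j)_{\C,p}(z_j)|\lesssim |\eta_j|^{p-1}\|g_j\|_{H^p_0}$ (or the analogous mesoscopic bound with a factor $N^{\gamma_j p}$ absorbed into $\|f_j\|_{H^p}$) vanishes like a power of $|\eta_j|$ near the real axis. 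With $p=k-\lfloor a/2\rfloor+1$ the product of the $k$ weights supplies roughly $\eta_*^{k-\lfloor a/2\rfloor}$ of vanishing, which must beat $\eta_*^{-(k-a/2)-1/2}$ up to the $N^{-1}$ and integration factors; a careful accounting (distinguishing which of the $z_j$ is the minimizer realizing $\eta_*$, and splitting the regime $|\eta_j|\leq N^{-1+\zeta}$, where the local law is not available, from $|\eta_j|\geq N^{-1+\zeta}$, where it is) must show the total is $\cO(N^{\eps}N^{-3/2}\max_j\|f_j\|_{H^p})$. For the very small $\eta_j$ region one instead uses a deterministic a priori bound $\|G(z)\|\leq|\eta|^{-1}$ together with the extra smoothness to integrate; this trade-off between the singularity of the resolvent and the regularity of the test function is the technical heart of the argument.

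Concretely, the key steps in order are: (i) state and apply the Helffer--Sj\"ostrand representation~\eqref{eq-HSplan}, controlling tails in $|\eta_j|$ to truncate to $[0,10]^k$ and folding the lower half-plane onto the upper via real-valuedness; (ii) apply Fubini to pull the deterministic $\partial_{\ol z}(f_j)_{\C,p}$ weights outside the expectation; (iii) insert Lemma~\ref{lem-mEexchangemeso} into the resulting iterated integral, giving the $\m[\cdot]$ and $\cE[\cdot]$ terms verbatim; (iv) bound the error contribution by splitting the $\eta$-integration into the local-law regime and the sub-$N^{-1+\zeta}$ regime, using in each the matching power of $|\eta_j|$ from the almost analytic extension against the inverse power from~\eqref{eq-mEexchangemeso} (resp. from $\|G\|\leq\eta^{-1}$), and collecting all $N^{\gamma_j}$ factors into $\max_j\|f_j\|_{H^p}$; (v) verify that $p=k-\lfloor a/2\rfloor+1$ is exactly the number of derivatives needed for convergence of these integrals. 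I would also note that the bounds on $\m[\cdot]$ and $\cE[\cdot]$ from Lemma~\ref{lem-mbounds} and Lemma~\ref{lem-Esize} guarantee the first two integrals in~\eqref{eq-Eexpansion} are themselves finite, so the expansion is meaningful. The main obstacle, to reiterate, is step (iv): making the power counting in $\eta_*$ tight enough that the error is genuinely of order $N^{-3/2+\eps}$ and not merely $N^{-1+\eps}$, which forces the precise choice of $p$ and a somewhat delicate case analysis of which spectral parameter is the smallest.
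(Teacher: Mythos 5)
Your proposal is correct and takes essentially the same route the paper intends: the paper itself states only that Theorem~\ref{thm-Eexp} ``follows from Lemma~\ref{lem-mEexchangemeso} and the Helffer--Sj\"ostrand formula'' and defers the details to the analogous, more involved proof of Theorem~\ref{thm-functCLT} in Section~\ref{sect-functCLTproof}, which runs exactly through your steps (i)--(v): Helffer--Sj\"ostrand representation, Fubini, insertion of the resolvent expansion, and an error analysis that trades the $\eta_*^{-(k-a/2+1/2)}$ singularity of the error in~\eqref{eq-mEexchangemeso} against the $|\eta_j|^{p-1}$ vanishing of $\partial_{\bar z}(f_j)_{\C,p}$, after using Stokes' theorem (the paper's~\eqref{eq-Stokes}) to push all but one $\eta_j$-contour up to $\eta_0=N^{-1+c}$ and treating the $|\eta_j|<\eta_0$ region by deterministic bounds. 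One small caution: your parenthetical about ``folding the lower half-plane onto the upper via real-valuedness'' does not literally work here, since conjugating a single $z_j$ does not conjugate the multi-resolvent chain (the other $z_i$ are held fixed); the correct bookkeeping is to track all $2^k$ sign choices separately (or work on the full strip $|\eta_j|\le 10$), but this does not affect the substance of the argument, and the paper is equally terse about it.
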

Theorem~\ref{thm-Eexp} follows from Lemma~\ref{lem-mEexchangemeso} and the Helffer-Sjöstrand formula (see~\cite{Davies1995}). As a similar argument will be used for the more involved proof of the multi-point functional CLT in Theorem~\ref{thm-functCLT}, we omit the details here.

\medskip
It follows from (93) in~\cite{CES-functCLT} that $\cE[T_1]$ is given by
\begin{align}
\cE[T_1]=\langle A_1\rangle \frac{m_1^5}{1-m_1^2}=\langle A_1\rangle m_1'm_1^3.\label{eq-Esmallest}
\end{align}
Hence, computing the second integral in~\eqref{eq-Eexpansion} shows that the $1/N$ term $\cE[f_1(W)A_1]$ of $\E\langle f_1(W)A_1\rangle$ is
\begin{displaymath}
\cE[f_1(W)A_1]=\int_{-2}^2f_1(x)\rho_{sc}(x)\dx x+\frac{\kappa_4}{2\pi}\int_{-2}^2\frac{(x^4-4x^2+2)f_1(x)}{\sqrt{4-x^2}}\dx x-\frac{f_1(0)}{2},
\end{displaymath}
where $\rho_{sc}$ denotes the density of the semicircle law in~\eqref{eq-scdensity}. We remark that this formula was already included in~\cite[Thm.~2.4]{CES-functCLT}. Theorem~\ref{thm-Eexp} hence generalizes Equation~(21) in~\cite{CES-functCLT} to arbitrary $k\geq1$ in the setting of Assumptions~\ref{as-Wigner} and~\ref{as-functions}.

\subsection{Statement of the Multi-Point Functional CLT}\label{sect-functCLT}
We now state our main result, the multi-point functional CLT for the statistics $Y_\alpha$ in~\eqref{eq-defY}. To give the limiting covariance structure explicitly, we introduce a set function $\m[\cdot|\cdot]$ to play the role of the deterministic approximation of the (appropriately scaled) covariance\footnote{Note the similarity between the notations $\m[\cdot]$ and $\m[\cdot|\cdot]$, which take one and two resolvent chains as arguments, respectively.} of $\langle T_{[1,k]}\rangle$ and $\langle T_{[k+1,k+\ell]}\rangle$ in the same way that $M_{[k]}$ and $\m[\cdot]$ do for the expectation of $T_{[1,k\rangle}G_k$ (see Theorem~\ref{thm-multiG-LL} as well as~\eqref{eq-defM}). Recall that we use $(z_j,A_j)$ and $T_j$ interchangeably. In particular, we may write
\begin{align*}
\m[\alpha|\beta]=\m[T_1,\dots,T_k|T_{k+1},\dots,T_{k+\ell}]
\end{align*}
where the two multi-indices $\alpha$ and $\beta$ contain the spectral parameters and deterministic matrices in $T_1,\dots,T_k$ and $T_{k+1},\dots,T_{k+\ell}$, respectively.

\begin{lemma}\label{lem-covapproxmeso}
Fix $k,\ell\in\N$, let $\alpha,\beta$ be two multi-indices of length $k$ and $\ell$, respectively, and let $W$ be a Wigner matrix satisfying Assumption~\ref{as-Wigner}. Pick two sets of spectral parameters $z_1,\dots,z_k$ and $z_{k+1},\dots,z_{k+\ell}$ that either both satisfy Case 1 or both satisfy Case~2 of Assumption~\ref{as-spectralmeso}, and denote $\eta_*=\min_j|\Im z_j|$. Moreover, pick deterministic matrices $A_1,\dots,A_{k+\ell}$ with $\|A_j\|\lesssim1$ such that $a$ matrices among $A_1,\dots,A_k$ and $b$ matrices among $A_{k+1},\dots,A_{k+\ell}$ are traceless. Then there exists a set function $\m[\cdot|\cdot]$ (defined recursively in Definition~\ref{def-M} below) such that
\begin{align}
&N^2\E X^{(k,a)}_\alpha X^{(\ell,b)}_\beta=\m[\alpha|\beta]+\cO\Big(\frac{N^\eps}{\sqrt{N\eta_*}\ \eta_*^{k-a/2}\eta_*^{\ell-b/2}}\Big)\label{eq-2ndorderLL1meso}
\end{align}
for any $\eps>0$.
\end{lemma}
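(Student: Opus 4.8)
The plan is to derive both the recursion defining $\m[\cdot|\cdot]$ and the estimate \eqref{eq-2ndorderLL1meso} simultaneously by a cumulant expansion argument applied to $N^2 \E X_\alpha^{(k,a)} X_\beta^{(\ell,b)}$. First I would write $X_\alpha = \langle T_{[1,k]}\rangle - \E\langle T_{[1,k]}\rangle$ and expand one of the two factors using the self-consistent equation for the resolvent: writing $W G_j = \I - z_j G_j - m_j G_j + (\text{fluctuation})$, the underlying move is the standard one of multiplying by $W$ and applying the cumulant expansion (integration by parts) formula $\E[W_{xy} h(W)] = \sum_{r\geq 1} \frac{\kappa_{r+1}}{r!} \E[\partial_{yx}^r h(W)]$, keeping track of which derivatives hit which resolvent chain. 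Because we already have the full multi-resolvent local law (Theorem~\ref{thm-multiG-LL}) and the expansion of the expectation (Lemma~\ref{lem-mEexchangemeso}), the deterministic approximations of all the resulting resolvent chains are known, so the bookkeeping reduces to organizing these contributions. The terms where derivatives act only within one chain reproduce (after using Lemma~\ref{lem-mEexchangemeso} and the local law) the deterministic self-energy structure and close the recursion in the ``diagonal'' direction; the genuinely new terms are the \emph{cross terms}, where a derivative coming from the expansion of the first chain lands on a resolvent of the second chain. These cross terms, of size $N^{-2}$ after the $N^2$ prefactor, are precisely what $\m[\alpha|\beta]$ collects, and they motivate Definition~\ref{def-M}.

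Concretely, I would fix an index, say $i_1$, from the first chain and write $G_1 = m_1 \I + m_1 \underline{G_1 W} G_1 + \dots$ using $G = m(\I + \underline{GW}G + \dots)$ where $\underline{GW} = GW - \E GW$ (the renormalization that makes the expansion converge, as in \cite{CES-optimalLL}). Plugging this into $\langle T_{[1,k]}\rangle$ and pairing the stray $W$ against the rest of the expression via the cumulant expansion produces, at second order (the $\kappa_2$ term), two families: (a) self-pairings inside $\langle T_{[1,k]}\rangle - \E\langle T_{[1,k]}\rangle$, which by the local law and Lemma~\ref{lem-mEexchangemeso} turn into a deterministic factor times a shorter version of the same covariance, and (b) cross-pairings with $X_\beta$, which by the isotropic local law \eqref{eq-multiGisotropic} produce a deterministic $M$-chain that ``bridges'' a gap in the first chain to a gap in the second chain — this is the ladder/bridge structure familiar from second-order freeness and from the covariance formula in \cite[Thm.~2.4]{CES-functCLT}. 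Higher cumulants $\kappa_{r+1}$ with $r \geq 2$ contribute lower-order terms except for one $\kappa_4$ contribution that must be tracked (it enters $\cE[\cdot]$ via Lemma~\ref{lem-mEexchangemeso} and hence is already absorbed there, or appears as an explicit new term). After collecting, one reads off the recursion for $\m[\cdot|\cdot]$ as the stable solution, exactly analogous to how \eqref{eq-matrixMrec} arises, and the remainder is controlled by the a priori bounds of Lemma~\ref{lem-mbounds} together with the error terms in Theorem~\ref{thm-multiG-LL} and Lemma~\ref{lem-mEexchangemeso}.

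For the error bookkeeping, the target $N^\eps (N\eta_*)^{-1/2} \eta_*^{-(k-a/2)} \eta_*^{-(\ell-b/2)}$ says we lose one factor $(N\eta_*)^{-1/2}$ relative to the ``size'' $\eta_*^{-(k-a/2)}\eta_*^{-(\ell-b/2)}$ of $\m[\alpha|\beta]$ itself (note the covariance has size squared compared to a single chain roughly, because each chain contributes $\eta_*^{-(k-a/2)}$ to a fluctuation after the $N$ scaling). Propagating this through the recursion requires checking that each step of the cumulant expansion — reducing chain length by splitting at an index — either gains the full $(N\eta_*)^{-1/2}$ improvement from the local-law error or is already deterministic; the delicate point is that chains appearing in intermediate steps may have a different count of traceless matrices (splitting $\langle T_{[1,k]}\rangle$ can create chains like $\langle T_{[1,j]}\rangle \langle T_{[j,k]}\rangle$ where the matrix $A_j$ at the splitting point is reused), so one must verify the exponent arithmetic $k - a/2$ is respected at every node, using $\lceil a/2 \rceil$ versus $a/2$ carefully as in Lemma~\ref{lem-mbounds}.

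The main obstacle I expect is controlling the cross terms uniformly: when a derivative from the first chain hits the second chain, one gets objects of the form $\langle \mathbf{x}, M\text{-chain}\, \mathbf{y}\rangle$ evaluated on deterministic vectors/matrices built from the other chain, and one must show these combine into exactly the recursive structure $\m[\cdot|\cdot]$ without leftover error that is too large — in particular the naive bound on a cross term might only give $\eta_*^{-(k-1)}\eta_*^{-(\ell-1)}$ rather than the sharper $\eta_*^{-(k-a/2)}\eta_*^{-(\ell-b/2)}$, so exploiting the traceless structure (the $\mathring{A}_j$ improvements, which give the extra half-powers of $\eta_*$ in the multi-resolvent local law) at each cross-pairing is essential and is where the bulk of the technical work sits. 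A secondary subtlety is that $X_\alpha X_\beta$ is a product of two already-centered quantities, so one cannot naively apply the expansion to $\langle T_{[1,k]} \rangle \langle T_{[k+1,k+\ell]}\rangle$; instead one should expand $\E[(\langle T_{[1,k]}\rangle - \E\langle T_{[1,k]}\rangle)\langle T_{[k+1,k+\ell]}\rangle]$ and verify the $\E\langle T_{[1,k]}\rangle \cdot \E\langle\cdot\rangle$ pieces cancel at the relevant order, which they do precisely because of the centering — this is routine but must be done to avoid spurious leading terms. I would defer the purely combinatorial verification that the recursion for $\m[\cdot|\cdot]$ matches the second-order free probability prediction to the companion paper \cite{JRcompanion}, as the excerpt indicates.
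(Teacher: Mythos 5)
Your plan captures the central mechanism of the paper's proof: renormalized cumulant expansion of $\langle \underline{W T_{[1,k]}}\rangle$ against $X_\beta$, with the within-chain derivatives closing a recursion in chain length while the cross-derivatives produce the two new source terms of Definition~\ref{def-M} ($\mathfrak{s}_{GUE}$ from the second cumulant, $\mathfrak{s}_\kappa$ from the mixed fourth-cumulant contribution). Your error bookkeeping picture, including the $(N\eta_*)^{-1/2}$ loss relative to $\m[\alpha|\beta]$ and the need to track traceless counts through each splitting, matches the actual argument.

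What you omit, and what is genuinely structural, is the reduction step that makes the cumulant expansion manageable. The paper proceeds by induction on the chain length and, before doing any cumulant expansion, first disposes of the case where some $A_j=\Id$: there it rewrites $G_jG_{j+1}$ either via the residue-theorem contour integral~\eqref{eq-residueGs} (when $\Im z_j$, $\Im z_{j+1}$ have the same sign) or via the resolvent identity~\eqref{eq-resolventid} (opposite signs), matches with the parallel contour/divided-difference identities for $\m[\cdot|\cdot]$ in Corollary~\ref{cor-mproperties}, and invokes the induction hypothesis directly. The cumulant expansion, the identity~\eqref{eq-m1identity}, and all the isotropic-resummation estimates are run \emph{only} on the residual case $a=k$ (all matrices in the first chain traceless). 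This is exactly what resolves the worry you raise about the exponent arithmetic: in the all-traceless case every splitting point preserves tracelessness on both halves, so Lemma~\ref{lem-mbounds} gives clean half-integer powers of $\eta_*^{-1}$ with no $\lceil\cdot\rceil$ losses, and the error closes at the claimed rate. Trying to run the cumulant expansion on a mixed chain directly would force one to propagate the ceiling/floor bookkeeping through every intermediate chain, and it is not clear that the bound in~\eqref{eq-2ndorderLL1meso} would then be recovered.

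A secondary imprecision concerns the $\kappa_4$ term. The contribution where \emph{all} third-order derivatives land on the first chain is not ``absorbed into $\cE[\cdot]$'' here; it appears multiplied by $\E X^{(\ell)}_\beta=0$ and is therefore error. The surviving $\kappa_4$ contribution to $\m[\cdot|\cdot]$ is the genuinely mixed one with exactly one derivative on $(T_{[1,k]})_{yx}$ and two on $X_\beta$, and it is precisely this case ($|\nu_1|=1$ in the paper's notation) that gives the Hadamard-product source $\mathfrak{s}_\kappa$ in~\eqref{eq-sourcekappa}. Sorting by $|\nu_1|\in\{0,1,2,3\}$ and showing that all but $|\nu_1|=1$ either vanish by centering or are error by isotropic resummation is where the bulk of the work sits, consistent with your expectation, but the precise fate of each case matters.
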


We remark that (cf.~\eqref{eq-msize} below)
\begin{displaymath}
|\m[T_1,\dots,T_k|T_{k+1},\dots,T_{k+\ell}]|\lesssim\frac{1}{\eta_*^{k+\ell-\lceil(a+b)/2\rceil}},
\end{displaymath}
i.e., the error term in~\eqref{eq-2ndorderLL1meso} is smaller than the deterministic leading term at least by a factor $(N\eta_*)^{-1/2}$. The statistics $X_\alpha$ and the corresponding CLT, as well as the properties of the function $\m[\cdot|\cdot]$ are discussed in more detail in Section~\ref{sect-resolventCLT} below. Moreover, explicit (non-recursive) formulas for $\m[\cdot|\cdot]$ are derived in the companion paper~\cite{JRcompanion}. We further recall the following definition.
\begin{definition}
Consider two functions of the Wigner matrix $W$ in Assumption~\ref{as-Wigner}, which we denote as $N$-dependent random variables $X^{(N)}$ and $Y^{(N)}$. We say that $X^{(N)}=Y^{(N)}+\cO(\eps)$ \emph{in the sense of moments} if for any polynomial $\psi$ it holds that
\begin{displaymath}
\E \psi(X^{(N)})=\E \psi(Y^{(N)})+\cO(\eps),
\end{displaymath}
where the implicit constant in $\cO(\cdot)$ only depends on the polynomial $\psi$ and the constants in Assumption~\ref{as-Wigner}.
\end{definition}

The main result of the paper can now be stated as follows.

\begin{theorem}[Multi-point functional CLT]\label{thm-functCLT}
Under the assumptions of Theorem~\ref{thm-Eexp} it holds that, for any $\eps>0$, the centered statistics~\eqref{eq-defY} are approximately distributed~(in the sense of moments) as
\begin{equation}
NY^{(k,a)}_{\alpha}=\xi(\alpha)+\cO\Big(\frac{N^\eps\max_j\|f_j\|_{H^p}}{\sqrt{N}}\Big)
\end{equation}
with a centered ($N$-dependent) Gaussian process $\xi(\alpha)$ satisfying
\begin{align}
&\E[\xi(\alpha)\xi(\beta)]\label{eq-covfunctionsgeneral}\\
&=\frac{1}{\pi^{k+\ell}}\int_{\R^k}\dx x_{[k]}\int_{[0,10]^k}\dx\eta_{[k]}\Big[\prod_{i=1}^k(\partial_{\overline{z}}(f_i)_{\C,p})(z_i)\Big]\int_{\R^\ell}\dx x_{[k+1,k+\ell]}\int_{[0,10]^\ell}\dx\eta_{[k+1,k+\ell]}\NN\\
&\quad\times\Big[\prod_{j=k+1}^\ell(\partial_{\overline{z}}(f_j)_{\C,q})(z_j)\Big]\m[G(z_1)A_1,\dots,G(z_k)A_k|G(z_{k+1})A_{k+1},\dots,G(z_{k+\ell})A_{k+\ell}].\NN
\end{align}
Here, $z_j=x_j+\ri\eta_j$, $\dx x_{[i,j]}=\dx x_i\dx x_{i+1}\dots,\dx x_{j}$ as well as $\dx \eta_{[i,j]}=\dx \eta_i\dx \eta_{i+1}\dots,\dx \eta_{j}$ for $i<j$, and $(f_j)_{\C,p}$ denotes the almost analytic extension of $f_j$ of order $p$. Further, $\beta$ denotes another multi-index of length $\ell$ containing the deterministic matrices $A_{k+1},\dots,A_{k+\ell}$ with $\|A_j\|\lesssim1$ out of which $b$ are traceless, as well as the test functions $f_{k+1},\dots,f_{k+\ell}$ satisfying Assumption~\ref{as-functions} with $q=\ell-\lfloor b/2\rfloor+1$. Recall that $\m[\cdot|\cdot]$ was introduced in Lemma~\ref{lem-covapproxmeso}.
\end{theorem}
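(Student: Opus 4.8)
\textbf{Proof strategy for Theorem~\ref{thm-functCLT}.}

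The plan is to reduce the general statement to the resolvent case via the Helffer--Sj\"ostrand formula, exactly in the spirit of the expansion in Theorem~\ref{thm-Eexp}, and then to prove the resolvent CLT (Theorem~\ref{thm-resolventCLTmeso}, stated in Section~\ref{sect-results1}) by the method of moments. First I would fix $p=k-\lfloor a/2\rfloor+1$ and $q=\ell-\lfloor b/2\rfloor+1$ and write each $f_j(W)=\frac{1}{\pi}\int_{\C}\partial_{\overline{z}}(f_j)_{\C,p}(z)\,G(z)\,\dx^2z$, so that $NY_\alpha^{(k,a)}$ becomes an integral over $k$ spectral parameters of $N X_{\alpha'}$ with $X$ as in~\eqref{def-modes}, plus the contribution of the truncation of the contour to $|\eta_j|\le 10$ (which is controlled by the decay of $m$ and its derivatives, just as in~\cite{CES-functCLT}). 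The restriction to $|\eta_j|\gtrsim N^{-1+\zeta}$ is afforded because below that scale one uses the trivial bound $\|G(z)\|\le |\Im z|^{-1}$ together with the $H^p$-regularity of $g_j$: the almost-analytic extension makes $\partial_{\overline z}(f_j)_{\C,p}(z)=\cO(|\eta_j|^{p-1})$, and the power counting $p-1 = k-\lfloor a/2\rfloor$ against $\eta_*^{-(k-a/2)}$ leaves a positive power of $\eta_*$ to integrate, contributing only to the error. Once this reduction is in place, the covariance formula~\eqref{eq-covfunctionsgeneral} is simply the $H^p\times H^q$ contour integral of the resolvent-level limiting covariance $\m[\cdot|\cdot]$ delivered by Lemma~\ref{lem-covapproxmeso}, and the Gaussianity of $\xi(\alpha)$ is inherited from the resolvent case.

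The core analytic input is therefore the resolvent multi-point CLT, which I would prove by showing that all joint cumulants of order $\ge 3$ of the family $\{NX_{\alpha}\}$ vanish in the limit, while the second-order cumulants converge to $\m[\cdot|\cdot]$. For the second-order statement one uses Lemma~\ref{lem-covapproxmeso} directly: $N^2\E X_\alpha \overline{X_\beta} = \m[\alpha|\beta^*]+\cO_\prec(\dots)$ after taking real/imaginary parts appropriately, and here the vanishing pseudo-variance $\sigma=0$ in Assumption~\ref{as-Wigner} is what makes the relevant off-diagonal two-resolvent covariance factor through $\m[\cdot|\cdot]$ cleanly. The higher cumulants are handled by the standard cumulant-expansion / self-improving-estimate machinery for Wigner matrices: write $\E\prod_i \langle G_1^{(i)}A_1^{(i)}\cdots\rangle$, apply the cumulant expansion in the entries $W_{xy}$, and iterate the local law~\eqref{eq-multiGaveraged}--\eqref{eq-multiGisotropic} to show that each $m$-th cumulant ($m\ge3$) carries a gain of $N^{-(m-2)/2}$ beyond the naive size $\eta_*^{-\sum(k_i - a_i/2)}$. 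The $\kappa_4$-term in Lemma~\ref{lem-mEexchangemeso} only affects the \emph{expectation}, hence is subtracted off in $X_\alpha$ and does not enter the covariance; this is why~\eqref{eq-covfunctionsgeneral} has no explicit $\kappa_4$ dependence even though $\kappa_4$ does appear in the $1/N$ term of the expectation (Theorem~\ref{thm-Eexp}).

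The main obstacle is the precise bookkeeping of $\eta_*$-powers through the cumulant expansion when several resolvent chains of different lengths $k_i$, with different numbers $a_i$ of traceless matrices, are multiplied together --- one must verify that the error terms in~\eqref{eq-2ndorderLL1meso} and in the higher-cumulant bounds are uniformly a factor $(N\eta_*)^{-1/2}$ below the leading $\prod_i\eta_*^{-(k_i-a_i/2)}$, uniformly over the mesoscopic range $\eta_*\gtrsim N^{-1+\zeta}$, so that after integrating against $\prod\partial_{\overline z}(f_j)_{\C,p}$ one lands on the stated error $N^\eps\max_j\|f_j\|_{H^p}/\sqrt N$. A secondary difficulty is establishing the recursion for $\m[\cdot|\cdot]$ (Definition~\ref{def-M}) and checking that the solution of that recursion is exactly what the leading term of $N^2\E X_\alpha \overline{X_\beta}$ produces; this is done at the resolvent level using the resolvent identity~\eqref{eq-resolventid} and the structure of $M_{[k]}$ in~\eqref{eq-matrixMrec}, and I would organize it as in Section~\ref{sect-results1}, deferring the explicit (non-recursive) evaluation and its match with second-order free probability to the companion paper~\cite{JRcompanion}. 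Once the resolvent case and the Helffer--Sj\"ostrand reduction are both in hand, the ``in the sense of moments'' conclusion follows because the method of moments controls all polynomial test functions of $NY_\alpha^{(k,a)}$ simultaneously with the stated error.
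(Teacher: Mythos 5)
Your overall architecture matches the paper's: Helffer--Sj\"ostrand reduction of $NY_\alpha$ to an integral of resolvent chains, truncation of the small-$\eta$ regime, a moment-method CLT for resolvents proved by cumulant expansion and the multi-resolvent local laws, and then the covariance read off by integrating $\m[\cdot|\cdot]$ against the almost-analytic extensions. Your formulation via vanishing joint cumulants of order $\ge 3$ is equivalent to the paper's inductive proof of the asymptotic Wick rule in Theorem~\ref{thm-resolventCLTmeso}.

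There is, however, one genuine error in your reasoning. You assert that the $\kappa_4$-term of Lemma~\ref{lem-mEexchangemeso} ``does not enter the covariance'' and that this explains the absence of $\kappa_4$ from~\eqref{eq-covfunctionsgeneral}. This conflates two different things. It is true that the deterministic $\frac{\kappa_4}{N}\cE[\cdot]$ correction to the expectation is removed by centering. But the limiting covariance $\m[\cdot|\cdot]$ itself \emph{does} depend on $\kappa_4$: see the decomposition~\eqref{eq-Mdecomp} and the source term $\mathfrak{s}_\kappa$ in Definition~\ref{def-M}, and already for $k=\ell=1$ the explicit formula~\eqref{eq-Msmallest} contains $\kappa_4$ terms. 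These arise in the cumulant expansion of $N^2\E\langle\underline{WT_{[1,k]}}\rangle X_\beta$ from the third-order cumulants ($\nu\in\{xy,yx\}^3$) whose derivatives produce four diagonal resolvent-chain entries; the double $x,y$-summation over diagonal entries is not suppressed by isotropic resummation and yields the Hadamard-product terms in~\eqref{eq-sourcekappa}. Equation~\eqref{eq-covfunctionsgeneral} has no \emph{explicit} $\kappa_4$ only because it is written in terms of $\m[\cdot|\cdot]$. If you set up the recursion for the covariance believing $\kappa_4$ drops out, you would obtain only $\m_{GUE}[\cdot|\cdot]$ and the result would be wrong for general Wigner matrices (it is only negligible relative to the leading term on mesoscopic scales, cf.\ Lemma~\ref{lem-sizems}, not on the macroscopic scale). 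A smaller point: your power counting for the regime $|\eta_j|\le N^{-1+\zeta}$ invokes the local-law size $\eta_*^{-(k-a/2)}$, which is not available there; the paper instead pushes all but one contour to $\eta_0=N^{-1+c}$ via Stokes' theorem, uses the trivial bound $\prod_j|\eta_j|^{-1}$ only below $\eta_r=N^{-5k}$, and bridges the intermediate regime with the a priori bound of~\cite[Lem.~6.1]{CES-optimalLL}.
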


The key ingredient for the proof of Theorem~\ref{thm-functCLT} is the case of all $f_j$ being resolvents, which we discuss in detail in Section~\ref{sect-resolventCLT} below. The full multi-point functional CLT is then obtained from the resolvent CLT using the Helffer-Sjöstrand formula (cf.~\cite{Davies1995}). We carry out the argument in Section~\ref{sect-functCLTproof}. 

\subsection{Discussion of the Multi-Point Functional CLT for Mesoscopic Scales}
In this section, we consider the mesoscopic regime of Theorem~\ref{thm-functCLT} (Case 2 of Assumption~\ref{as-functions}) in more detail. We start by noting that the set function $\m[\cdot|\cdot]$ in Lemma~\ref{lem-covapproxmeso} can be decomposed as
\begin{equation}\label{eq-Mdecomp}
\m[\cdot|\cdot]=\m_{GUE}[\cdot|\cdot]+\kappa_4\m_\kappa[\cdot|\cdot],
\end{equation}
with functions $\m_{GUE}[\cdot|\cdot]$ and $\m_{\kappa}[\cdot|\cdot]$ that do not depend on any parameters of the underlying Wigner matrix $W$. Equation~\eqref{eq-Mdecomp} induces a similar decomposition for the limiting covariance in~\eqref{eq-covfunctionsgeneral}. We remark that the two contributions are not of comparable size in the mesoscopic regime and that the summand with prefactor $\kappa_4$ is of lower order. This reduces the leading term in~\eqref{eq-covfunctionsgeneral} to the case $\kappa_4=0$, thus simplifying it considerably. We give a brief example in the resolvent case to illustrate this phenomenon. More general bounds are given in Lemma~\ref{lem-sizems} below. Recall that we may interpret the resolvent $G(z)$ as a function rescaled to scale $|\Im z|^{-1}$ around $\Re z$ to match~\eqref{eq-testfunct}.

\begin{example}\label{ex-mesobounds}
For $k=\ell=1$, we have by (92) of~\cite{CES-functCLT} (or Definition~\ref{def-M} below) that
\begin{align}
\m[T_1|T_2]&:=\langle A_1A_2\rangle\frac{m_1^2m_2^2}{(1-m_1m_2)}+\langle \mathbf{a}_1\mathbf{a}_2\rangle\cdot\kappa_4m_1^3m_2^3\label{eq-Msmallest}\\
&\quad+\langle A_1\rangle \langle A_2\rangle\Big(\frac{m_1'm_2'}{(1-m_1m_2)^2}-\frac{m_1^2m_2^2}{(1-m_1m_2)}+2\kappa_4m_1m_1'm_2m_2'-\kappa_4m_1^3m_2^3\Big),\NN
\end{align}
where $\mathbf{a}_i$ denotes the diagonal part of the matrix $A_i$. Assuming that $\|A_1\|,\|A_2\|\lesssim 1$ and $|\Im z_1|,|\Im z_2|\geq\eta_*$, a brief computation using the explicit explicit form
\begin{displaymath}
m(z)=\frac{-z+\sqrt{z^2-4}}{2}
\end{displaymath}
of the solution to~\eqref{eq-mselfcon} shows that $|(1-m_1m_2)^{-1}|\lesssim \eta_*^{-1}$. Hence, the function $\m[T_1|T_2]$ in~\eqref{eq-Msmallest} satisfies the bounds
\begin{align*}
\m[T_1|T_2]\lesssim \begin{cases} \eta_*^{-2},\quad&\text{if }\langle A_1\rangle\langle A_2\rangle\neq0,\\ \eta_*^{-1},\quad &\text{if }\langle A_1\rangle\langle A_2\rangle=0.\end{cases}
\end{align*}
Both inequalities are sharp if $\Im z_1$ and $\Im z_2$ have opposite signs. We further note that
\begin{displaymath}
\m_{GUE}[T_1|T_2]\lesssim \begin{cases} \eta_*^{-2},\quad&\text{if }\langle A_1\rangle\langle A_2\rangle\neq0,\\ \eta_*^{-1},\quad &\text{if }\langle A_1\rangle\langle A_2\rangle=0,\end{cases}\quad \m_{\kappa}[T_1|T_2]=\cO(1),
\end{displaymath}
i.e., the two parts of $\m[\cdot|\cdot]$ (cf. decomposition in~\eqref{eq-Mdecomp}) do not contribute equally unless $\eta_*\gtrsim1$ (macroscopic regime).
\end{example}

We hence restrict the following discussion to the case $\kappa_4=0$. Even with this simplification, the limiting covariance in Theorem~\ref{thm-functCLT} may be tedious to compute using the recursive definition of $\m[\cdot|\cdot]$ alone. In the companion paper~\cite{JRcompanion}, we consider the recursion defining $\m[\cdot|\cdot]$ in detail and derive explicit formulas. Combining \cite[Thm.~2.4]{JRcompanion} with~\eqref{eq-covfunctionsgeneral} then yields a more direct way of computing the limiting covariance which is fully explicit whenever $\kappa_4=0$. The result is given in Corollary~\ref{cor-covarianceLL} below. We emphasize that $\E\xi(\alpha)\xi(\beta)$ is a sum of terms that decompose into a product of a function of the deterministic matrices $A_1,\dots,A_k$ and an expression in the test functions $f_1,\dots,f_{k+\ell}$, respectively. Moreover, the combinatorics underlying the summation mirror those encountered in second-order free probability theory. The proof of Corollary~\ref{cor-covarianceLL} is given in Section~\ref{sect-bigformula}.

\medskip
To state the result, we denote by $NCP(k)$ the set of non-crossing partitions of $[k]$, by $\NCA(k,\ell)$ the set of non-crossing permutations of the $(k,\ell)$-annulus, and by $K(\pi)$ the Kreweras complement associated with an element $\pi\in NCP(k)$ or $\pi\in\NCA(k,\ell)$, respectively. Moreover, the first and second-order cumulants $h_\circ$ and $h_{\circ\circ}$ associated with set functions $h[\cdot]$ and $h[\cdot|\cdot]$ are computed recursively from the moment-cumulant relations
\begin{align}
h[S]&=\sum_{\pi\in NCP(S)}\prod_{B\in\pi}h_{\circ}[B],\label{eq-mcrelation1}\\
h[S_1|S_2]&=\sum_{\pi\in \NCA(|S_1|,|S_2|)}\prod_{B\in\pi}h_{\circ}[B]+\sum_{\substack{\pi_1\times\pi_2\in NCP(|S_1|)\times NCP(|S_2|),\\ U_1\in\pi_1,U_2\in\pi_2\text{ marked}}} h_{\circ\circ}[U_1|U_2]\prod_{\substack{B\in \pi_1\setminus U_1\\ \cup\pi_2\setminus U_2}}h_{\circ}[B].\label{eq-mcrelation2}
\end{align}

The full definitions and some illustrative examples are, e.g., given in~\cite[Sect.~1]{JRcompanion} or~\cite{MSBook}.

\begin{corollary}\label{cor-covarianceLL}
Consider the setup of Theorem~\ref{thm-functCLT} for $\kappa_4=0$. Then,
\begin{align}
&\E\xi(\alpha)\xi(\beta)=\sum_{\pi\in \NCA(k,\ell)}\Big(\prod_{B\in K(\pi)}\Big\langle \prod_{j\in B}A_j\Big\rangle\Big)\prod_{B\in\pi}\Phi_{\pi,B}(f_j|j\in B)\label{eq-covfunctionsprecise}\\
&\quad+\sum_{\substack{\pi_1\times\pi_2\in NCP(k)\times NCP(\ell),\\ U_1\in\pi_1,U_2\in\pi_2\text{ marked}}}\Big(\prod_{\substack{B_1\in K(\pi_1),\\ B_2\in K(\pi_2)}}\Big\langle \prod_{j\in B_1} A_j\Big\rangle\Big\langle \prod_{j\in B_2} A_j\Big\rangle\Big)\Phi_{\pi_1\times\pi_2,U_1\times U_2}(f_1,\dots,f_{k+\ell}).\NN
\end{align}
The functions $\Phi_{\pi,B}$ and $\Phi_{\pi_1\times\pi_2,U_1\times U_2}$ in~\eqref{eq-covfunctionsprecise} are given by
\begin{equation}\label{eq-phis1}
\Phi_{\pi,B}(f_j|j\in B):=\mathrm{sc}_{\circ}[B],
\end{equation}
where $\mathrm{sc}_{\circ}[\cdot]$ denotes the first-order free cumulant function associated with
\begin{equation}\label{eq-defsc}
\mathrm{sc}[i_1,\dots,i_n]:=\int_{-2}^2\Big[\prod_{j=1}^n f_{i_j}(x)\Big]\rho_{sc}(x)\dx x,
\end{equation}
with $\rho_{sc}$ as in~\eqref{eq-scdensity}, and 
\begin{equation}\label{eq-phis2}
\Phi_{\pi_1\times\pi_2,U_1\times U_2}(f_1,\dots,f_{k+\ell}):=\mathrm{sc}_{\circ\circ}[U_1|U_2]\prod_{\substack{B_1\in \pi_1\setminus U_1,\\ B_2\in\pi_2\setminus U_2}}\mathrm{sc}_{\circ}[B_1]\mathrm{sc}_{\circ}[B_2].
\end{equation}
Here, $\mathrm{sc}_{\circ\circ}[\cdot|\cdot]$ denotes the second-order free cumulants associated with $\mathrm{sc}[\cdot]$ in~\eqref{eq-defsc} and
\begin{equation}\label{eq-defsc2}
\mathrm{sc}[i_1,\dots,i_n|i_{n+1},\dots,i_{n+m}]:=\frac12\int_{-2}^2\int_{-2}^2\Big(\prod_{j=1}^nf_{i_j}(x)\Big)'\Big(\prod_{j=1}^mf_{i_{n+j}}(y)\Big)'u(x,y)\dx x\dx y,
\end{equation}
where the integral kernel $u:[-2,2]\times[-2,2]\rightarrow\R$ is given by
\begin{equation}\label{eq-kernel}
u(x,y):=\frac{1}{4\pi^2}\ln\Big[\frac{(\sqrt{4-x^2}+\sqrt{4-y^2})^2(xy+4-\sqrt{4-x^2}\sqrt{4-y^2})}{(\sqrt{4-x^2}-\sqrt{4-y^2})^2(xy+4+\sqrt{4-x^2}\sqrt{4-y^2})}\Big].
\end{equation}
\end{corollary}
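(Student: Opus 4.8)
The plan is to combine the Helffer--Sjöstrand representation of $\E\xi(\alpha)\xi(\beta)$ from Theorem~\ref{thm-functCLT} with the explicit (non-recursive) formula for the set function $\m[\cdot|\cdot]$ proved in the companion paper. The starting point is \eqref{eq-covfunctionsgeneral}, which already expresses the limiting covariance as a multiple contour integral whose integrand is $\m[G(z_1)A_1,\dots,G(z_k)A_k\,|\,G(z_{k+1})A_{k+1},\dots,G(z_{k+\ell})A_{k+\ell}]$. Into this I would substitute the $\kappa_4=0$ specialization of \cite[Thm.~2.4]{JRcompanion} (equivalently, only the $\m_{GUE}[\cdot|\cdot]$ part of the decomposition \eqref{eq-Mdecomp} survives), which writes $\m[\cdot|\cdot]$ as a finite sum indexed by $\NCA(k,\ell)$ together with a finite sum indexed by pairs of non-crossing partitions of $[k]$ and $[\ell]$ with one marked block in each factor. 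In every summand the dependence on the spectral parameters and on the deterministic matrices factorizes: a product of traces $\langle\prod_{j\in B}A_j\rangle$ over the blocks $B$ of the relevant Kreweras complement carries no dependence on $z_1,\dots,z_{k+\ell}$, while the remaining factor is a product, over the blocks of the non-crossing (permutation resp.\ partition) structure itself, of scalar weights depending on the $z_j$ only through iterated divided differences $m[\cdot]$; by \eqref{eq-1slotdivdif} these weights are precisely the first-order (and, for the marked pair, second-order) free cumulants built from the resolvent moment functions, i.e.\ the iterated divided differences $m[i_1,\dots,i_n]=\m[G_{i_1},\dots,G_{i_n}]$ and, for the marked pair, the two-chain quantities $\m[\cdot|\cdot]$ with all deterministic matrices set to the identity.

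The second step is to carry out the contour integrals term by term. Since the matrix-trace factors are constant in the spectral parameters, the integration acts only on the scalar weights, and the basic input is the scalar identity
\[
\frac{1}{\pi^n}\int_{\R^n}\int_{[0,10]^n}\Big[\prod_{r=1}^n(\partial_{\overline{z}}(f_{i_r})_{\C,p})(z_{i_r})\Big]m[i_1,\dots,i_n]\,\dx\eta_{i_1}\cdots\dx\eta_{i_n}\,\dx x_{i_1}\cdots\dx x_{i_n}=\mathrm{sc}[i_1,\dots,i_n],
\]
which is the all-identity case of the argument used for Theorem~\ref{thm-Eexp} (Helffer--Sjöstrand formula, cf.~\cite{Davies1995}). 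Because this integral is linear and factorizes over disjoint groups of spectral parameters, expanding a free cumulant of the $m[\cdot]$ by the moment--cumulant relation \eqref{eq-mcrelation1} and integrating each monomial turns it into the corresponding free cumulant $\mathrm{sc}_\circ[\cdot]$ of the moments $\mathrm{sc}[\cdot]$ in \eqref{eq-defsc}; the same applies to the unmarked blocks of the disconnected term. This produces the weights $\Phi_{\pi,B}=\mathrm{sc}_\circ[B]$ of \eqref{eq-phis1} and the factors $\mathrm{sc}_\circ[B_1]\mathrm{sc}_\circ[B_2]$ in \eqref{eq-phis2}, while the index sets $\NCA(k,\ell)$ resp.\ $NCP(k)\times NCP(\ell)$, the marked blocks, and the Kreweras-complement matrix traces of \cite[Thm.~2.4]{JRcompanion} pass through unchanged into \eqref{eq-covfunctionsprecise}.

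The remaining and hardest piece is the contribution of the marked pair of blocks, i.e.\ the Helffer--Sjöstrand integral of the second-order resolvent building block, from which the kernel $u(x,y)$ of \eqref{eq-kernel} must emerge. I would handle this by first integrating by parts in each of the two groups of spectral parameters, moving the holomorphic derivatives off the $m$-functions and onto the almost-analytic extensions -- this accounts for the $(\prod f)'(\prod f)'$ structure of \eqref{eq-defsc2} -- then using the explicit solution $m(z)=(-z+\sqrt{z^2-4})/2$ of \eqref{eq-mselfcon} together with identities such as $\partial_z\partial_w[-\log(1-m(z)m(w))]=m'(z)m'(w)/(1-m(z)m(w))^2$ to reduce to a jump-across-the-cut computation over $[-2,2]\times[-2,2]$ that yields the logarithm in \eqref{eq-kernel}; a final Möbius inversion over the non-crossing structure of the marked blocks (inverting \eqref{eq-mcrelation2}) upgrades the resulting second-order moment function to the second-order free cumulant $\mathrm{sc}_{\circ\circ}[\cdot|\cdot]$. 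I expect the two genuine obstacles to be (a) matching the marked-block bookkeeping of \cite[Thm.~2.4]{JRcompanion} to the second-order free-probability template in \eqref{eq-mcrelation2}, and (b) the explicit evaluation of the logarithmic kernel $u$; by contrast, the reduction to scalar integrals and the entire first-order part are routine given linearity of the Helffer--Sjöstrand integral and the identities already established for Theorem~\ref{thm-Eexp}.
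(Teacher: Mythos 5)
Your outline is largely faithful to the paper's actual proof: both start from the Helffer--Sj\"ostrand representation \eqref{eq-covfunctionsgeneral}, substitute the explicit $\NCA$/marked-partition formula for $\m_{GUE}[\cdot|\cdot]$ from the companion paper, observe that the matrix-trace factors $\langle\prod_{j\in B}A_j\rangle$ are constant in the spectral parameters, and reduce to scalar Helffer--Sj\"ostrand integrals of the $m[\cdot]$ and $m[\cdot|\cdot]$ weights which commute with the (first- and second-order) moment--cumulant relations by linearity and factorization over disjoint groups of $z_j$. Your first-order part and the remark that the unmarked blocks of the disconnected term factor off are exactly the paper's argument (via~\cite[Lem.~4.1]{CES-thermalization}).

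Where you diverge is in the derivation of the logarithmic kernel $u(x,y)$ behind the second-order piece. The paper does not derive $u$ inside the proof of this corollary at all: it invokes the integral representation of Corollary~\ref{cor-mintegral}, which already writes $m_{GUE}[1,\dots,k\,|\,k{+}1,\dots,k{+}\ell]$ as a double integral against $u$; the identity $\tfrac1\pi\int\partial_{\bar z}(f_j)_{\C}(z_j)(x-z_j)^{-2}\,\dx^2 z_j = f_j'(x)+\cO(\eta_r)$ then produces the $(\prod f)'(\prod f)'$ structure of \eqref{eq-defsc2} directly by the Leibniz rule, with no integration by parts on the $m$-weights. In turn, Corollary~\ref{cor-mintegral} is proved by first establishing the $k=\ell=1$ case from a known GOE result (\cite{DiazJaramilloPardo2022}, up to a factor $2$ relating GUE and GOE) and then lifting it via the divided-difference structure of Corollary~\ref{cor-mproperties}. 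You instead propose to derive $u$ from scratch, using the identity $\partial_{z}\partial_{w}\bigl[-\log(1-m(z)m(w))\bigr]=m'(z)m'(w)/(1-m(z)m(w))^2$, integration by parts to move derivatives onto the almost-analytic extensions, and a jump-across-the-cut computation. That identity is correct and such a direct computation is plausible, so this is a genuinely different route and would make the argument more self-contained; but it is also substantially harder, and you leave it as an acknowledged gap. You should be aware that the paper avoids exactly this computation by delegating it to the integral representation Corollary~\ref{cor-mintegral}, and that for $|U_1|,|U_2|>1$ the object $m_{GUE}[U_1|U_2]$ is no longer $\partial_z\partial_w[-\log(1-m_zm_w)]$ but its divided difference in both groups of variables, so the jump computation only needs to be done for the base case and the rest should be handled by the divided-difference structure, as the paper does.
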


We remark that the structure of~\eqref{eq-covfunctionsprecise} resembles the formula in~\cite[Thm.~6]{MaleMingoPecheSpeicher2020} for the covariance of alternating products of GUE and deterministic matrices in second-order free probability. This connection is discussed further in the companion paper~\cite{JRcompanion}. In particular, applying Theorem~\ref{thm-functCLT} and~\eqref{eq-covfunctionsprecise} for $f_1(x)=\dots=f_{k+\ell}(x)=x$ reproduces the corresponding formulas in~\cite{MaleMingoPecheSpeicher2020} (cf.~\cite[Cor.~2.11]{JRcompanion}).

\medskip
Theorem~\ref{thm-functCLT} and Corollary~\ref{cor-covarianceLL} identify the limiting process $\xi(\alpha)$ in terms of the test functions $f_1,\dots,f_{k+\ell}$. Similar to~\cite[Sect.~2.3]{CES-functCLT} in the case $k=\ell=1$, we can make use of the mesoscopic scaling~\eqref{eq-testfunct} to give asymptotic formulas in terms of the functions $g_1,\dots,g_{k+\ell}$ whenever $\gamma_j>0$ for all $j$. The key quantities $\mathrm{sc}[\cdot]$ and $\mathrm{sc}[\cdot|\cdot]$ characterizing the covariance structure of two modes $\smash{Y^{(k,a)}_{\alpha}}$ and $\smash{Y^{(\ell,b)}_{\beta}}$ can then be conveniently expressed in terms of the $\smash{L^2}$ and $\smash{\dot{H}^{1/2}}$ inner products
\begin{displaymath}
\langle f,g\rangle_{L^2}:=\int_{\R}f(x)g(x)\dx x,\quad \langle f,g\rangle_{\dot{H}^{1/2}}:=\int_{\R^2}\frac{f(x)-f(y)}{x-y}\frac{g(x)-g(y)}{x-y}\dx x\dx y.
\end{displaymath}

\begin{theorem}[Bulk scaling asymptotics]\label{thm-bulkasympt}
Under the assumptions of Theorem~\ref{thm-functCLT}, pick test functions $f_1,\dots,f_{k+\ell}$ that satisfy Assumption~\ref{as-functions} with some $\delta,\gamma_j>0$.
\begin{itemize}
\item[(i)] Whenever $\gamma_1=\dots=\gamma_{n+m}=\gamma$ and the functions $f_{i_1},\dots,f_{i_{n+m}}$ are rescaled around the same $E_0\in[-2+\delta,2-\delta]$, it holds that
\begin{align*}
N^\gamma\mathrm{sc}[i_1,\dots,i_{n+m}]&=\rho_{sc}(E_0)\Big\langle\prod_{j=1}^ng_{i_j},\prod_{j=n+1}^{n+m}g_{i_j}\Big\rangle_{L^2}+\cO\Big(N^{-\gamma}\Big),\\
\mathrm{sc}[i_1,\dots,i_n|i_{n+1},\dots,i_{n+m}]&=\frac{1}{4\pi^2}\Big\langle\prod_{j=1}^ng_{i_j},\prod_{j=n+1}^{n+m}g_{i_j}\Big\rangle_{\dot{H}^{1/2}}+\cO\Big(N^{-\gamma}\Big).
\end{align*}
\item[(ii)] If $\gamma_{i_1}=\dots=\gamma_{i_{m+n}}=\gamma$, but the functions $f_{i_1},\dots,f_{i_{n+m}}$ are not rescaled around the same energy, we have the bounds
\begin{align*}
N^{\gamma}\mathrm{sc}[i_1,\dots,i_{n+m}]&=\cO\Big(N^{-\gamma}\Big),\\
\mathrm{sc}[i_1,\dots,i_n|i_{n+1},\dots,i_{n+m}]&=\cO\Big(N^{-\gamma}\Big).
\end{align*}
Recall that $E_{i_1},\dots,E_{i_{n+m}}$ are fixed and $N$-independent.
\item[(iii)] If $E_{i_1}=\dots=E_{i_{n+m}}$, but the scales $\gamma_1,\dots,\gamma_{n+m}$ do not all coincide, we have the bounds
\begin{align*}
N^{\gamma_{\min}}\mathrm{sc}[i_1,\dots,i_{n+m}]&=\cO\Big(N^{-(\gamma_{\min,2}-\gamma_{\min}}\Big),\\
\mathrm{sc}[i_1,\dots,i_n|i_{n+1},\dots,i_{n+m}]&=\cO\Big(N^{-(\gamma_{\min,2}-\gamma_{\min}}\Big).
\end{align*}
with $\gamma_{\min}=\min_j\gamma_{i_j}$ and $\gamma_{\min,2}=\min\{\gamma_{i_j}| \gamma_{i_j}>\gamma_{\min}\}$.
\end{itemize}
The implicit constants in the error terms depend only on $\delta$ and the scaling exponents $\gamma_j$ as well as the test functions $g_1,\dots,g_{k+\ell}$ through $\|g_j\|_{H^p_0}$ and $|\mathrm{supp}g_j|$.
\end{theorem}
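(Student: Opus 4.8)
The plan is to reduce the entire statement to two elementary analytic facts about the ingredients of~\eqref{eq-defsc}--\eqref{eq-kernel} and then to track the mesoscopic scaling~\eqref{eq-testfunct} through a change of variables matched to it. First, $\rho_{sc}$ is real-analytic on $(-2,2)$, hence smooth with all derivatives bounded on every compact subinterval; since each $f_j$ from~\eqref{eq-testfunct} is supported in $\{|x-E_j|\lesssim N^{-\gamma_j}\}$ with $E_j\in[-2+\delta,2-\delta]$, for $N$ large all integrals in~\eqref{eq-defsc} and~\eqref{eq-defsc2} are effectively over the bulk window $[-2+\delta/2,2-\delta/2]$, where $\rho_{sc}$ is uniformly smooth. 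Second, one reads off from~\eqref{eq-kernel} that on $[-2+\delta/2,2-\delta/2]^2$ one has $u(x,y)=-\tfrac{1}{2\pi^2}\ln|x-y|+h(x,y)$ with $h$ bounded together with all its derivatives: the only factor inside the logarithm in~\eqref{eq-kernel} that degenerates as $x\to y$ is $(\sqrt{4-x^2}-\sqrt{4-y^2})^{-2}$, and $\sqrt{4-x^2}-\sqrt{4-y^2}=(y^2-x^2)/(\sqrt{4-x^2}+\sqrt{4-y^2})$ has only a simple zero on the diagonal (the seemingly stronger degeneracy at $x=y=0$ cancels against the numerator factor $xy+4-\sqrt{4-x^2}\sqrt{4-y^2}$, as a short computation shows).

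For Case~(i), substitute $x=E_0+N^{-\gamma}v$ in~\eqref{eq-defsc}, so that $\prod_j f_{i_j}(x)$ becomes $F(v):=\prod_j g_{i_j}(v)$; Taylor-expanding $\rho_{sc}(E_0+N^{-\gamma}v)=\rho_{sc}(E_0)+\cO(N^{-\gamma}|v|)$ and using that $F$ is compactly supported gives $N^\gamma\mathrm{sc}[i_1,\dots,i_{n+m}]=\rho_{sc}(E_0)\int_{\R}F+\cO(N^{-\gamma})$, which is the first claim. For~\eqref{eq-defsc2}, substitute $x=E_0+N^{-\gamma}v$, $y=E_0+N^{-\gamma}w$: the two chain-rule factors $N^\gamma$ cancel the Jacobian $N^{-2\gamma}$, and $\ln|x-y|=-\gamma\ln N+\ln|v-w|$, so that with $F:=\prod_{j\le n}g_{i_j}$ and $G:=\prod_{j>n}g_{i_j}$,
\begin{equation*}
\mathrm{sc}[i_1,\dots,i_n|i_{n+1},\dots,i_{n+m}]=\tfrac12\iint F'(v)G'(w)\Big(-\tfrac{1}{2\pi^2}\ln|v-w|+\tfrac{\gamma\ln N}{2\pi^2}+h(E_0,E_0)+\cO\big(N^{-\gamma}(|v|+|w|)\big)\Big)\dx v\,\dx w.
\end{equation*}
Because $\int_{\R}F'=\int_{\R}G'=0$, the $\ln N$ term, the constant $h(E_0,E_0)$, and — after expanding $h$ one step further — the $\cO(N^{-\gamma})$ term all drop out, leaving the $N$-independent integral $-\tfrac{1}{4\pi^2}\iint F'(v)G'(w)\ln|v-w|\,\dx v\,\dx w$ up to $\cO(N^{-\gamma})$. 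A Parseval computation based on $\langle f,g\rangle_{\dot{H}^{1/2}}=-2\iint f'(x)g'(y)\ln|x-y|\,\dx x\,\dx y$ (the $\delta$-contribution to the Fourier transform of $\ln|x|$ is annihilated by the vanishing of $\widehat{F'},\widehat{G'}$ at the origin) then identifies this integral with the claimed multiple of $\langle\prod_{j\le n}g_{i_j},\prod_{j>n}g_{i_j}\rangle_{\dot{H}^{1/2}}$.

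Case~(ii) is a support argument. If two of the $E_{i_j}$ differ, the corresponding $f_{i_j}$ have disjoint supports once $N$ is large, hence $\prod_j f_{i_j}\equiv0$ and $\mathrm{sc}[i_1,\dots,i_{n+m}]=0$; for~\eqref{eq-defsc2}, if two energies disagree within neither block then the relevant product is identically zero, and the only surviving configuration is two blocks rescaled around distinct bulk energies $E\ne E'$, where $u$ is smooth near $(E,E')$, so the change of variables of Case~(i) together with $\int F'=\int G'=0$ again gives $\cO(N^{-\gamma})$. For the first-order quantity in Case~(iii), substituting $v=N^{\gamma_{\min}}(x-E)$ turns each factor into $g_{i_j}(N^{\gamma_{i_j}-\gamma_{\min}}v)$; the factor whose exponent equals $\gamma_{\min,2}$ confines the bounded integrand to $\{|v|\lesssim N^{-(\gamma_{\min,2}-\gamma_{\min})}\}$, whence $N^{\gamma_{\min}}\mathrm{sc}[i_1,\dots,i_{n+m}]=\cO(N^{-(\gamma_{\min,2}-\gamma_{\min})})$.

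The second-order quantity in Case~(iii) is the crux. Decomposing $u=-\tfrac{1}{2\pi^2}\ln|x-y|+h$, the $h$-contribution is handled by one integration by parts in the variable of the block \emph{not} containing the minimiser of the exponents (if the minimum is attained more than once then $\gamma_{\min,2}=\gamma_{\min}$ and the trivial $\cO(1)$ bound suffices): this moves the derivative onto $h$ and leaves a product $P$ with $\int_{\R}|P|\lesssim N^{-\gamma_{\min,2}}$ — all exponents in that block exceed $\gamma_{\min}$, hence are $\ge\gamma_{\min,2}$ — paired against the other block's derivative of total variation $\cO(1)$, so this part is $\cO(N^{-\gamma_{\min,2}})\subseteq\cO(N^{-(\gamma_{\min,2}-\gamma_{\min})})$. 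After the substitution $x=E+N^{-\gamma_{\min}}v$, $y=E+N^{-\gamma_{\min}}w$ and the now-familiar cancellation of the $\ln N$ term via $\int_{\R}\tP_1'=\int_{\R}\tP_2'=0$, the logarithmic part reduces to bounding $\langle\tP_1,\tP_2\rangle_{\dot{H}^{1/2}}$ for the rescaled block products, one of which — say $\tP_2$, after relabelling — is supported on a window of length $\lesssim N^{-(\gamma_{\min,2}-\gamma_{\min})}$. This is the step I expect to be the main obstacle: since $\dot{H}^{1/2}$ is scale-invariant, the naive Cauchy--Schwarz bound $|\langle\tP_1,\tP_2\rangle_{\dot{H}^{1/2}}|\le\|\tP_1\|_{\dot{H}^{1/2}}\|\tP_2\|_{\dot{H}^{1/2}}$ is only $\cO(1)$, and the decay must genuinely come from the scale mismatch. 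The route I would pursue is to split $\int|\xi|\,\overline{\widehat{\tP_1}(\xi)}\,\widehat{\tP_2}(\xi)\,\dx\xi$ into a low- and a high-frequency part at the reciprocal of the small window, using $\|\widehat{\tP_2}\|_\infty\le\|\tP_2\|_{L^1}\lesssim N^{-(\gamma_{\min,2}-\gamma_{\min})}$ on the low part and the $H^p_0$-bounds on the $g_j$ from Assumption~\ref{as-functions} to tame the high-frequency tail — which is precisely why the Sobolev index $p$ shows up in the error estimates. Reconciling the various scale combinations within and across the two blocks and checking that the errors consolidate to the stated powers of $N$ is then routine.
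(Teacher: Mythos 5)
Your overall strategy---localize to the bulk, split $u(x,y)=-\tfrac{1}{2\pi^2}\ln|x-y|+h(x,y)$ with $h$ smooth (your cancellation of the apparent extra degeneracy at the origin is correct, since $(xy+4)^2-(4-x^2)(4-y^2)=4(x+y)^2$), rescale, and exploit $\int F'=\int G'=0$---is exactly the ``careful integration by parts'' the paper alludes to, and parts (i)--(ii) and the first-order bound in (iii) are essentially complete. But there are two genuine problems. First, the constant in (i) is asserted rather than derived: with your (correct) identity $\langle F,G\rangle_{\dot H^{1/2}}=-2\int\int F'(v)G'(w)\ln|v-w|\,\dx v\,\dx w$, the leading term $-\tfrac{1}{4\pi^2}\int\int F'G'\ln|v-w|$ equals $\tfrac{1}{8\pi^2}\langle F,G\rangle_{\dot H^{1/2}}$, \emph{half} the stated $\tfrac{1}{4\pi^2}$. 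This is not a slip in your harmonic analysis but a normalization inconsistency between \eqref{eq-defsc2}--\eqref{eq-kernel} and the theorem: in Chebyshev variables $x=2\cos\theta$, $y=2\cos\phi$ one finds $u=\tfrac{1}{2\pi^2}\ln\big|\sin\tfrac{\theta+\phi}{2}/\sin\tfrac{\theta-\phi}{2}\big|$, hence $\int_{-2}^{2}\int_{-2}^{2}u(x,y)\,\dx x\,\dx y=1$ and \eqref{eq-defsc2} gives $\mathrm{sc}[1|2]=\tfrac12$ for $f_1(x)=f_2(x)=x$, whereas $\lim_N N^2\Cov(\langle W\rangle,\langle W\rangle)=1$ and \eqref{eq-calculationcc} is normalized to that value. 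So the stated $\tfrac{1}{4\pi^2}$ is the correct constant and the prefactor $\tfrac12$ in \eqref{eq-defsc2} (equivalently, the normalization of $u$ in Corollary~\ref{cor-mintegral}) carries a spurious factor of two; a careful proof must either track this or flag it, not paper over it with ``identifies with the claimed multiple.''

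Second, case (iii) for the second-order quantity is not actually proved. Your parenthetical ``if the minimum is attained more than once then $\gamma_{\min,2}=\gamma_{\min}$'' contradicts the definition $\gamma_{\min,2}=\min\{\gamma_{i_j}\,|\,\gamma_{i_j}>\gamma_{\min}\}$: multiplicity of the minimizer never forces $\gamma_{\min,2}=\gamma_{\min}$, so the sub-case where $\gamma_{\min}$ is attained in \emph{both} blocks still demands the full decay $N^{-(\gamma_{\min,2}-\gamma_{\min})}$; there your integration by parts for the $h$-part only yields $\cO(N^{-\gamma_{\min}})$, which is insufficient when $\gamma_{\min,2}>2\gamma_{\min}$. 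More importantly, the central estimate $|\langle\t{P}_1,\t{P}_2\rangle_{\dot H^{1/2}}|\lesssim N^{-(\gamma_{\min,2}-\gamma_{\min})}$ is left as a sketch. The route is viable---for a single pair one has $\int|\xi|\,|\hat g_1(\xi)|\,|\widehat{g_2(\lambda\cdot)}(\xi)|\,\dx\xi\le\lambda^{-1}\|\hat g_2\|_\infty\int|\xi|\,|\hat g_1(\xi)|\,\dx\xi\lesssim\lambda^{-1}$ with $\lambda=N^{\gamma_{\min,2}-\gamma_{\min}}$, no frequency splitting needed---but for products of several $g_j$ living on intermediate scales the Sobolev norms of $\t{P}_1,\t{P}_2$ grow with $N$, and you must verify that the high-frequency tail closes with only $p=k-\lfloor a/2\rfloor+1$ derivatives. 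As written, this is the one assertion of the theorem your proposal does not establish.
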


The proof of Theorem~\ref{thm-bulkasympt} follows from the definition of $\mathrm{sc}[\cdot]$ in~\eqref{eq-defsc} and (careful) integration by parts of~\eqref{eq-defsc2}. We omit the details.

\medskip
Next, we discuss conditions under which two modes $Y_{\alpha}$ and $Y_{\beta}$ in Theorem~\ref{thm-functCLT} are asymptotically independent. Note that the case $k=\ell=1$ of Corollary~\ref{cor-uncorrelated} was already discussed in~\cite[Thm.~2.13]{CES-functCLT}.

\begin{corollary}[Independent modes]\label{cor-uncorrelated}
Under the assumptions of Theorem~\ref{thm-functCLT}, let $\alpha$ and $\beta$ be multi-indices such that the test functions $f_1,\dots,f_k$ associated with $\alpha$ are all rescaled around a common reference energy $E_\alpha$ on the scale $\gamma_\alpha>0$ and the test functions $f_{k+1},\dots,f_{k+\ell}$ associated with $\beta$ are all rescaled around $E_\beta$ on the scale $\gamma_\beta>0$. Further, assume that $\alpha$ and $\beta$ contain $a$ and $b$ traceless matrices, respectively, and denote by $\xi(\alpha)$ and $\xi(\beta)$ the corresponding limiting Gaussian processes  in Theorem~\ref{thm-functCLT}.
\begin{itemize}
\item[(i)] If $E_\alpha\neq E_\beta$, then the processes $\xi(\alpha)$ and $\xi(\beta)$ are asymptotically independent in the sense that
\begin{displaymath}
\ \ \big|\E[\xi(\alpha)\xi(\beta)]\big|\lesssim N^{-\min\{\gamma_\alpha,\gamma_\beta\}}.
\end{displaymath}
\item[(ii)] If $\gamma_\alpha\neq\gamma_\beta$, then the processes $\xi(\alpha)$ and $\xi(\beta)$ are asymptotically independent in the sense that
\begin{displaymath}
\big|\E[\xi(\alpha)\xi(\beta)]\big|\lesssim N^{-|\gamma_\alpha-\gamma_\beta|}.
\end{displaymath}
\item[(iii)] If $a+b$ is odd, then the processes $\xi(\alpha)$ and $\xi(\beta)$ are always asymptotically independent in the sense that
\begin{displaymath}
\E[\xi(\alpha)\xi(\beta)]=\cO\Big(\frac{N^\eps\max_{i\in[k]}\|f_i\|_{H^p}\max_{j\in[k+1,k+\ell]}\|f_j\|_{H^q}}{\sqrt{N}}\Big),
\end{displaymath}
where $p=k-\lfloor a/2\rfloor+1$ and $q=\ell-\lfloor b/2\rfloor+1$, i.e., the leading deterministic term in~\eqref{eq-covfunctionsgeneral} has the same size as the error term in Theorem~\ref{thm-functCLT}.
\end{itemize}
\end{corollary}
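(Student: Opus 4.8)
The plan is to derive all three statements directly from the explicit formula for the limiting covariance in Corollary~\ref{cor-covarianceLL}, reducing everything to size estimates on the scalar quantities $\mathrm{sc}[\cdot]$ and $\mathrm{sc}[\cdot|\cdot]$ (for parts (i) and (ii)) and to a parity/symmetry argument on $\m[\cdot|\cdot]$ (for part (iii)). Throughout I work with $\kappa_4=0$ for parts (i) and (ii), using Example~\ref{ex-mesobounds} and Lemma~\ref{lem-sizems} to control the $\kappa_4$-dependent piece of $\m[\cdot|\cdot]$ separately; in the mesoscopic regime that contribution is genuinely lower order, so it only worsens the stated error bounds by a constant.

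For part (i), I would start from~\eqref{eq-covfunctionsprecise}. Every summand there is a product of a deterministic-matrix factor (bounded since $\|A_j\|\lesssim1$) and a product of factors $\mathrm{sc}_\circ[B]$ or $\mathrm{sc}_{\circ\circ}[U_1|U_2]$. By the moment–cumulant relations~\eqref{eq-mcrelation1}–\eqref{eq-mcrelation2}, each $\mathrm{sc}_\circ[B]$ and $\mathrm{sc}_{\circ\circ}[U_1|U_2]$ is a finite $\Z$-linear combination of products of $\mathrm{sc}[\cdot]$'s and $\mathrm{sc}[\cdot|\cdot]$'s. Since $\alpha$ and $\beta$ each carry a single common energy and scale, every block $B$ appearing in a nonzero contribution to $\E[\xi(\alpha)\xi(\beta)]$ with indices straddling both $\alpha$ and $\beta$ — and in $\NCA(k,\ell)$ there is always at least one such block connecting the two circles, while in the second sum the marked pair $U_1|U_2$ plays that role — falls into case (ii) or (iii) of Theorem~\ref{thm-bulkasympt} because the two energies differ: $E_\alpha\neq E_\beta$. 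Each such factor is then $\cO(N^{-\gamma})$ with $\gamma=\min\{\gamma_\alpha,\gamma_\beta\}$ (after accounting for the $N^{\gamma}$-normalization in Theorem~\ref{thm-bulkasympt}(ii), which is exactly the scaling built into the definition~\eqref{eq-testfunct} and the Helffer–Sjöstrand representation~\eqref{eq-covfunctionsgeneral}), while the remaining factors within $\alpha$ or within $\beta$ are $\cO(1)$ after their own normalization. Collecting the normalizations and noting that the number of summands and blocks is bounded in terms of $k,\ell$ only, one obtains $|\E[\xi(\alpha)\xi(\beta)]|\lesssim N^{-\min\{\gamma_\alpha,\gamma_\beta\}}$. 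Part (ii) is entirely analogous: when $\gamma_\alpha\neq\gamma_\beta$, any block $B$ connecting the two circles (or the marked pair $U_1|U_2$) has test functions of at least two distinct scales among its arguments, so Theorem~\ref{thm-bulkasympt}(iii) applies and each such factor is $\cO(N^{-(\gamma_{\min,2}-\gamma_{\min})})=\cO(N^{-|\gamma_\alpha-\gamma_\beta|})$, since the two scales present are exactly $\gamma_\alpha$ and $\gamma_\beta$.

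For part (iii), I would argue at the level of $\m[\cdot|\cdot]$ rather than through the explicit formula, using its recursive definition (Definition~\ref{def-M}) together with the analogous structure of $\m[\cdot]$ and $M_{[\cdot]}$. The claim is that $\m[T_1,\dots,T_k|T_{k+1},\dots,T_{k+\ell}]$ vanishes identically when the total number $a+b$ of traceless matrices is odd. This is the same parity phenomenon already visible in the a priori bound $|\m[T_1,\dots,T_k]|\lesssim\eta_*^{-(k-1-\lceil a/2\rceil)}$ in Lemma~\ref{lem-mbounds}, where only $\lceil a/2\rceil$ — not $a$ — appears: a single unpaired traceless matrix produces an extra averaged factor $\langle\mathring{A}_j\rangle=0$. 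Concretely, I would show by induction on $k+\ell$ that every term generated by the recursion for $\m[\cdot|\cdot]$ either contains a factor $\langle M_S A_s\rangle$ or $\langle M_{S_1}\dots\rangle$ in which an odd number of traceless matrices is enclosed with no further "pairing" partner — forcing that factor to vanish by~\eqref{eq-matrixMrec} and the fact that $M$-chains with a single traceless insertion have vanishing normalized trace — or else it pairs up all traceless matrices, which is impossible when $a+b$ is odd. Granting this, $\m[\cdot|\cdot]\equiv0$ under the hypothesis of (iii), so the leading term in~\eqref{eq-covfunctionsgeneral} is exactly zero and only the error term in Theorem~\ref{thm-functCLT} survives, giving the stated bound with $p=k-\lfloor a/2\rfloor+1$ and $q=\ell-\lfloor b/2\rfloor+1$.

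The main obstacle is the bookkeeping in part (iii): making the parity induction on the recursion for $\m[\cdot|\cdot]$ fully rigorous requires tracking, through each of the several types of terms produced by Definition~\ref{def-M}, how traceless matrices get distributed among the sub-chains and why an odd leftover always lands inside a normalized trace of an $M$-chain (which vanishes) rather than inside an isotropic $M$-factor (which need not). For parts (i) and (ii) the only mild subtlety is confirming that in both sums of~\eqref{eq-covfunctionsprecise} there is genuinely always at least one factor "connecting" $\alpha$ and $\beta$ — for the annular sum this is immediate from the definition of $\NCA(k,\ell)$, and for the second sum it is the marked pair $U_1|U_2$ carrying the $\mathrm{sc}_{\circ\circ}$ factor — so that the decay from Theorem~\ref{thm-bulkasympt}(ii)–(iii) cannot be avoided.
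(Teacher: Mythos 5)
Parts (i) and (ii) of your proposal follow the paper's route: the paper's proof is exactly the reduction to Theorem~\ref{thm-bulkasympt} via the explicit covariance formula~\eqref{eq-covfunctionsprecise}, using that every annular permutation has a through-block and every marked-partition term carries the factor $\mathrm{sc}_{\circ\circ}[U_1|U_2]$ connecting the two index sets. Your bookkeeping of the normalizations is loose but the mechanism is the intended one.

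Part (iii), however, rests on a false claim. You assert that $\m[T_1,\dots,T_k|T_{k+1},\dots,T_{k+\ell}]\equiv0$ whenever $a+b$ is odd, on the grounds that "a single unpaired traceless matrix produces an extra averaged factor $\langle\mathring{A}_j\rangle=0$". This is only true when the leftover block of the Kreweras complement contains \emph{exactly one} traceless matrix; an odd total $a+b$ merely forces some block to contain an \emph{odd} number of traceless matrices, and $\langle\mathring{A}_1\mathring{A}_2\mathring{A}_3\rangle$ does not vanish. Concretely, take $k=3$, $\ell=1$ with $A_1,A_2,A_3$ traceless and $A_4=\Id$ (so $a+b=3$), and the annular permutation $\pi=(1\,4)(2)(3)\in\NCA(3,1)$: its Kreweras complement is the single cycle $(1\,2\,3\,4)$, and the corresponding summand in~\eqref{eq-covfunctionsprecise} is $\langle A_1A_2A_3\rangle\,\mathrm{sc}_{\circ}[1,4]\,\mathrm{sc}_{\circ}[2]\,\mathrm{sc}_{\circ}[3]$, which is generically nonzero. (The case $k=\ell=1$, where $\m[T_1|T_2]$ in~\eqref{eq-Msmallest} really does vanish for $a+b=1$, is misleadingly special.) Your claimed identity would also make the decorrelation hold in the macroscopic regime, contradicting the remark below Theorem~\ref{thm-resolventCLTmeso}, which states explicitly that this feature is exclusive to mesoscopic scales. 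The correct mechanism, and the one the paper invokes, is quantitative rather than algebraic: by~\eqref{eq-msize} the covariance is bounded by $\eta_*^{-(k+\ell-\lceil(a+b)/2\rceil)}$, and for odd $a+b$ the ceiling gains a factor $\eta_*^{1/2}$ over the natural normalization $\eta_*^{-(k-a/2)}\eta_*^{-(\ell-b/2)}$ of the two modes; fed through the Helffer--Sj\"ostrand integration down to $\eta_0=N^{-1+c}$, this places the leading term at the size of the error term in Theorem~\ref{thm-functCLT}, which is exactly the (weaker) statement of part (iii).
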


The proof of Corollary~\ref{cor-uncorrelated} is immediate from Theorem~\ref{thm-bulkasympt} as well as the remark on independent modes in the resolvent CLT below Theorem~\ref{thm-resolventCLTmeso}. 

\begin{remark}[Multiple independent Wigner matrices]
The high-probability sense of Theorem~\ref{thm-functCLT} allows us to generalize the result to multiple independent Wigner matrices by resolving the individual matrices iteratively while conditioning on all others. We remark that a similar mechanism has also been applied for computing the deterministic approximation of $\langle f_1(W_{i_1})A_1\dots f_k(W_{i_k})A_k\rangle$ if $W_{i_1},\dots,W_{i_k}$ are taken, possibly with repetitions, from a family of independent Wigner matrices (see~\cite[Ext.~2.13]{CES-thermalization}). We give an example in the case $k=\ell=2$. Let $W,W'$ denote two independent GUE matrices and pick bounded deterministic matrices $A_1,\dots,A_4$ with $\langle A_j\rangle\neq0$ as well as test functions $f_1,\dots,f_4$ satisfying Case 2 of Assumption~\ref{as-functions} with $p=3$. Then,
\begin{align*}
&N^2\E\big[\big(\langle f_1(W)A_1f_2(W')A_2\rangle-\E\langle f_1(W)A_2f_2(W')A_2\rangle\big)\\
&\quad\times\big(\langle f_3(W)A_3f_4(W')A_4\rangle-\E\langle f_3(W)A_3f_4(W')A_4\rangle\big)\big]\\
&=\mathrm{sc}_{\circ\circ}[1|3]\langle A_1f_2(W')A_2\rangle\langle A_3f_4(W')A_4\rangle+\mathrm{sc}_{\circ}[1,3]\langle A_1f_2(W')A_2A_3f_4(W')A_4\rangle\\
&\quad+\mathrm{sc}_{\circ}[1]\mathrm{sc}_{\circ}[3]N^2\E\big[\big(\langle A_1f_2(W')A_2\rangle-\E\langle A_2f_2(W')A_2\rangle\big)\big(\langle A_3f_4(W')A_4\rangle-\E\langle A_3f_4(W')A_4\rangle\big)\big]\\
&\quad+\cO\Big(\frac{N^\eps\max\{\|f_1\|_{H^3},\|f_3\|_{H^3}\}}{\sqrt{N}}\Big)\\
&=\langle A_1A_2\rangle\langle A_3A_4\rangle(\mathrm{sc}_{\circ\circ}[1|3]\mathrm{sc}_{\circ}[2]\mathrm{sc}_{\circ}[4]+\mathrm{sc}_{\circ\circ}[2|4]\mathrm{sc}_{\circ}[1]\mathrm{sc}_{\circ}[3])+\langle A_1A_4\rangle\langle A_2A_3\rangle\mathrm{sc}_{\circ}[1,3]\mathrm{sc}_{\circ}[2,4]\\
&\quad+\langle A_1A_2A_3A_4\rangle\mathrm{sc}_{\circ}[1,3]\mathrm{sc}_{\circ}[2]\mathrm{sc}_{\circ}[4]+\langle A_2A_1A_4A_3\rangle\mathrm{sc}_{\circ}[1]\mathrm{sc}_{\circ}[2,4]\mathrm{sc}_{\circ}[3]\\
&\quad+\cO\Big(\frac{N^\eps\max_j\|f_j\|_{H^3}}{\sqrt{N}}\Big).
\end{align*}
In the first step, we conditioned on $W'$ and applied Corollary~\ref{cor-covarianceLL}, treating $W'$, and hence $f_j(W')$, as deterministic. After computing the leading term, Corollary~\ref{cor-covarianceLL} is applied again for $W'$. Lastly, the remaining terms are identified using the local law~\cite[Cor.~2.7]{CES-optimalLL}, which yields a total of five summands. In contrast, if $W=W'$, all terms on the right-hand side of~\eqref{eq-covfunctionsgeneral} may contribute. For $k=\ell=2$, this yields 27 terms in total (cf. \cite[Ex.~1.18]{JRcompanion}). Analogous statements hold for an arbitrary number of independent Wigner matrices with possible repetitions. We remark that the underlying combinatorial structure for  $n$ independent GUE matrices is given by the so-called \emph{non-mixing} annular non-crossing permutations resp. \emph{non-mixing} marked partitions for $n$ colors, which was also established in~\cite{MaleMingoPecheSpeicher2020} for the special case $f_1(x)=\dots=f_k(x)=x$ and general Wigner matrices.
\end{remark}

\subsection{Application to Thermalization Problems}
We now specialize Theorem~\ref{thm-functCLT} to the functions $f_j(x)=\re^{\ri t_j x}$ with ($N$-independent) numbers $t_j\in\R$. Recall that
\begin{equation}\label{eq-Heisenberg}
A(t):=\re^{\ri tW}A\re^{-\ri t W}
\end{equation}
describes the Heisenberg time evolution of an observable $A$ and that it follows from~\cite[Cor.~2.9]{CES-thermalization} that the observables $A_1(t)$ and $A_2$ become thermalized for $t\gg1$, i.e., that
\begin{displaymath}
\langle A_1(t)A_2\rangle\approx\langle A_1\rangle\langle A_2\rangle
\end{displaymath}
in the large $t$ regime. Fixing $t\sim1$ and using Theorem~\ref{thm-functCLT}, we readily conclude that the fluctuations around the thermal value are Gaussian and can give the leading terms of the variance explicitly by taking an $t\rightarrow\infty$ limit after letting $N\rightarrow\infty$. As the randomness in $\langle A_1(t)A_2\rangle$ cancels out if either $A_1=\Id$ or $A_2=\Id$, we omit the deterministic term $\langle A_1\rangle\langle A_2\rangle$ from the following result and assume w.l.o.g. that $\langle A_1\rangle=\langle A_2\rangle=0$.

\begin{corollary}\label{cor-thermalization}
Let $\kappa_4=0$, $\langle A_1\rangle=\langle A_2\rangle=0$, and $\|A_1\|,\|A_2\|\lesssim1$. Then,
\begin{displaymath}
\langle A_1(t)A_2\rangle=\langle A_1A_2\rangle\frac{J_1(2t)^2}{t^2}+\frac{\xi(t)}{N}+\cO\Big(\frac{N^\eps}{N^{3/2}}\Big),
\end{displaymath}
where $A_1(t)$ is as defined in~\eqref{eq-Heisenberg}, $J_1$ is a Bessel function of the first kind, and $\xi(t)$ is a centered Gaussian random variable. In the $t\rightarrow\infty$ limit, the variance of $\xi(t)$ satsifies the asymptotics
\begin{align}
\Var[\xi(t)]&=\langle |A_1|^2\rangle\langle |A_2|^2\rangle+\cO\Big(\frac{1}{t^2}\Big)\label{eq-varlimit}
\end{align}
and we further obtain
\begin{align}
\E\xi(t_1)\overline{\xi(t_2)}&=\langle |A_1|^2\rangle\langle |A_2|^2\rangle \Big(\frac{J_1(2(t_1-t_2))}{t_1-t_2}\Big)^2+\cO\Big(\frac{1}{(\min\{t_1,t_2\})^2}\Big).\label{eq-covlimit}
\end{align}
In particular, $\xi(t_1)$ and $\xi(t_2)$ are asymptotically uncorrelated if we take an $|t_1-t_2|\rightarrow\infty$ limit after letting $N\rightarrow\infty$.
\end{corollary}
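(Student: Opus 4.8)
\textbf{Proof proposal for Corollary~\ref{cor-thermalization}.}

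The plan is to apply Theorem~\ref{thm-functCLT} with $k=\ell=2$ and the specific choice $f_1(x)=\re^{\ri t x}$, $f_2(x)=\re^{-\ri t x}$ (and analogously $f_3(x)=\re^{\ri t_2 x}$, $f_4(x)=\re^{-\ri t_2 x}$ for the covariance statement), together with the traceless matrices $A_1,A_2$. Since $\langle A_1\rangle=\langle A_2\rangle=0$, we have $a=b=2$, so $p=q=2$ in the assumptions of Theorem~\ref{thm-Eexp}, and the exponentials $\re^{\ri t_j x}$ are smooth with all Sobolev norms bounded (for fixed $t_j$), so Assumption~\ref{as-functions} is satisfied in the macroscopic Case~1. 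This immediately yields that $N(\langle A_1(t)A_2\rangle - \E\langle A_1(t)A_2\rangle)$ converges, in the sense of moments, to a centered Gaussian $\xi(t)$ with an $\cO(N^{-1/2})$ error, establishing the Gaussianity claim. For the deterministic part, I would combine Theorem~\ref{thm-Eexp} (with $\kappa_4=0$) with the known leading-order formula: the main term of $\E\langle A_1(t)A_2\rangle$ for traceless $A_1,A_2$ is $\langle A_1 A_2\rangle J_1(2t)^2/t^2$ as established in~\cite{CES-thermalization} (it equals $\m[f_1(W)A_1,f_2(W)A_2]$ spelled out via the $k=2$ case of $\m[\cdot]$ and the Helffer--Sjöstrand representation), and the $\kappa_4/N$ correction vanishes; the remaining error in Theorem~\ref{thm-Eexp} is $\cO(N^\eps N^{-3/2})$, which is absorbed into the stated error term. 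This gives the displayed expansion.

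The substantive part is computing $\Var[\xi(t)] = \E[\xi(\alpha)\overline{\xi(\alpha)}]$ and the cross-covariance $\E\xi(t_1)\overline{\xi(t_2)}$ via Corollary~\ref{cor-covarianceLL} with $\kappa_4=0$, and then extracting the large-$t$ asymptotics. For $k=\ell=2$ with $\alpha=((f_1,A_1),(f_2,A_2))$ and $\beta=((f_3,A_3),(f_4,A_4))$, I would enumerate the terms in~\eqref{eq-covfunctionsprecise}: the sum over $\NCA(2,2)$ contributes terms with a single trace $\langle \prod_j A_j\rangle$ over a Kreweras block, and the marked-partition sum contributes products of two traces. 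Using~\eqref{eq-phis1}--\eqref{eq-kernel}, each term factorizes into a matrix part times an integral over $[-2,2]$ (resp. $[-2,2]^2$) of products of the $f_{i_j}$ (resp. their derivatives against the kernel $u$) weighted by $\rho_{sc}$. With $f_1 = \overline{f_2} = \re^{\ri t x}$ one has $f_1 f_2 \equiv 1$, so many of the $\mathrm{sc}_\circ$ factors collapse: e.g. $\mathrm{sc}[1,2] = \int_{-2}^2 \rho_{sc} = 1$, while $\mathrm{sc}[1] = \int_{-2}^2 \re^{\ri t x}\rho_{sc}(x)\dx x = J_1(2t)/t$, which $\to 0$ as $t\to\infty$. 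Tracking which terms survive the $t\to\infty$ limit: the dominant surviving contribution is the one where $f_1$ pairs with $f_2$ and $f_3$ pairs with $f_4$ (so all oscillatory integrals become $1$), with matrix part $\langle A_1 A_2 \rangle\langle \overline{A_1 A_2}\rangle$-type expressions; for self-adjoint-like combinations this reduces to $\langle |A_1|^2\rangle\langle |A_2|^2\rangle$. All other terms carry at least one factor of $\mathrm{sc}[\cdot]$ that is $\cO(1/t)$ (from a Bessel function $J_1(2t)/t$ or similar oscillatory integral), hence are $\cO(1/t^2)$ or smaller; here I would use standard stationary-phase/Bessel asymptotics $J_1(s) = \cO(s^{-1/2})$ and the fact that $\int_{-2}^2 \re^{\ri s x}\rho_{sc}(x)\dx x$ and $\int \int (\re^{\ri s x})'(\re^{\ri s' y})' u(x,y)$ decay in $s,s'$. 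For the cross-covariance~\eqref{eq-covlimit}, the same bookkeeping applies but now the pairing of $f_1$ (frequency $t_1$) with $f_4$ (frequency $-t_2$) produces $\mathrm{sc}[1,4] = \int_{-2}^2 \re^{\ri(t_1-t_2)x}\rho_{sc}(x)\dx x = J_1(2(t_1-t_2))/(t_1-t_2)$; the surviving leading term is $\langle|A_1|^2\rangle\langle|A_2|^2\rangle (J_1(2(t_1-t_2))/(t_1-t_2))^2$, with all remaining terms controlled by powers of $1/\min\{t_1,t_2\}$. The asymptotic uncorrelatedness then follows since $J_1(2(t_1-t_2))/(t_1-t_2)\to 0$ as $|t_1-t_2|\to\infty$.

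The main obstacle I anticipate is the careful combinatorial bookkeeping in the last step: one must correctly list all terms of~\eqref{eq-covfunctionsprecise} for $k=\ell=2$ (there are on the order of a dozen), correctly identify the Kreweras-complement block structure determining which matrix traces appear, and then verify that every term other than the claimed leading one contains a decaying oscillatory factor — being careful with the conjugation convention in $\Cov$ (linear in the first slot, antilinear in the second), which is what turns $\langle A_1 A_2\rangle\langle A_1 A_2\rangle$-type products into the $\langle|A_1|^2\rangle\langle|A_2|^2\rangle$ appearing in~\eqref{eq-varlimit}. A secondary, more technical point is justifying the interchange of the $N\to\infty$ and $t\to\infty$ limits, i.e. checking that the error term $\cO(N^\eps \max_j\|f_j\|_{H^p}/\sqrt{N})$ in Theorem~\ref{thm-functCLT} is uniform enough in $t$ on the relevant range; since for fixed $t$ the Sobolev norms $\|\re^{\ri t x}\|_{H^p}$ are finite, this is a matter of noting the implicit constants do not blow up, and the statements~\eqref{eq-varlimit}--\eqref{eq-covlimit} are read off from the $N=\infty$ covariance $\E\xi(\alpha)\xi(\beta)$, which is an explicit function of $t$ (resp. $t_1,t_2$) via Corollary~\ref{cor-covarianceLL}. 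I would present the $t\to\infty$ expansion purely at the level of this explicit deterministic formula, so no further probabilistic input is needed beyond Theorem~\ref{thm-functCLT} and Corollary~\ref{cor-covarianceLL}.
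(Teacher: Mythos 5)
Your overall route matches the paper's: apply Theorem~\ref{thm-functCLT} in the macroscopic regime with $f_1=\re^{\ri t x}$, $f_2=\re^{-\ri t x}$ (and $A_3=A_1^*$, $A_4=A_2^*$ coming from the anti\-linearity of $\Cov$), read off the variance from Corollary~\ref{cor-covarianceLL}, and then expand in $t$ using Bessel asymptotics. However, there is a genuine error in the combinatorial identification of the dominant term in $\Var[\xi(t)]$, and it is not merely cosmetic.

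You claim the leading surviving contribution is ``the one where $f_1$ pairs with $f_2$ and $f_3$ pairs with $f_4$, with matrix part $\langle A_1A_2\rangle\langle\overline{A_1A_2}\rangle$, reducing to $\langle|A_1|^2\rangle\langle|A_2|^2\rangle$.'' Two things go wrong here. First, $\langle A_1A_2\rangle\langle A_3A_4\rangle=|\langle A_1A_2\rangle|^2$ does \emph{not} reduce to $\langle|A_1|^2\rangle\langle|A_2|^2\rangle$; Cauchy--Schwarz gives only $|\langle A_1A_2\rangle|^2\leq\langle|A_1|^2\rangle\langle|A_2|^2\rangle$, so the stated answer cannot come from that matrix part. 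Second, and more structurally, the term with matrix part $\langle A_1A_2\rangle\langle A_3A_4\rangle$ comes from the marked-partition sum in~\eqref{eq-covfunctionsprecise} with $\pi_1=\{\{1\},\{2\}\}$, $\pi_2=\{\{3\},\{4\}\}$ (so $K(\pi_1)=\{\{1,2\}\}$, $K(\pi_2)=\{\{3,4\}\}$), and its $\Phi$-factor necessarily contains two singleton free cumulants $\mathrm{sc}_\circ[j]=J_1(2t)/t=\cO(t^{-3/2})$ — the paper shows this entire contribution is $\cO(t^{-2})$, i.e., \emph{subleading}. The permutation $(12)(34)$ you would need for ``$f_1$ pairs with $f_2$'' to give an order-one oscillatory factor is not in $\NCA(2,2)$ and, if interpreted as a marked partition with blocks $\{1,2\}$ and $\{3,4\}$, would have Kreweras complement $\{\{1\},\{2\},\{3\},\{4\}\}$ and hence matrix part $\langle A_1\rangle\cdots\langle A_4\rangle=0$.

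The actual leading term is the annular non-crossing permutation $\pi=(14)(23)\in\NCA(2,2)$, whose Kreweras complement has blocks $\{1,3\}$ and $\{2,4\}$, giving matrix part $\langle A_1A_3\rangle\langle A_2A_4\rangle=\langle A_1A_1^*\rangle\langle A_2A_2^*\rangle=\langle|A_1|^2\rangle\langle|A_2|^2\rangle$, while the $\Phi$-factor is $\mathrm{sc}_\circ[1,4]\mathrm{sc}_\circ[2,3]=(1-J_1(2t)^2/t^2)^2=1+\cO(t^{-3})$ since $f_1f_4=f_2f_3\equiv1$. Notably, your treatment of the cross-covariance \emph{does} use the $(1,4)$/$(2,3)$ pairing — which is the correct one — so the confusion appears specific to the $t_1=t_2$ variance case, where the pairing $\{1,2\}$ happens to give the same oscillatory factor as $\{1,4\}$ but a different (and wrong) matrix part; that internal inconsistency should have been a red flag. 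With the corrected identification the rest of the argument goes through as you sketched.
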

Corollary~\ref{cor-thermalization} follows directly from Corollary~\ref{cor-covarianceLL} by specifying the test functions. We carry out the details in Section~\ref{app-thermalization}. 

\section{Central Limit Theorem for Resolvents}\label{sect-results1}
In this section, we supply the recursive definitions of the set functions $\cE[\cdot]$ and $\m[\cdot|\cdot]$ introduced in Lemmas~\ref{lem-mEexchangemeso} and~\ref{lem-covapproxmeso}, respectively, and study their properties. We further state the analog of Theorem~\ref{thm-functCLT} for the resolvent case, which constitutes the main ingredient for the proof of the multi-point functional CLT.

\subsection{The $\frac1N$ Term of $\E\langle T_{[1,k]}\rangle$}\label{sect-mEexchange}
As a first step, we revisit the function $\cE[\cdot]$, starting with its recursive definition.

\begin{definition}\label{def-E}
Let $(T_1,\dots,T_k)$ be an ordered set of $\C^{N\times N}$ matrices of the form $T_j=G_jA_j$. We define $\cE[\cdot]$ to be the set function taking values in $\C$ that satisfies the linear recursion with a source term (in the last two lines)
\begin{align}
&\cE[T_1,\dots,T_k]\NN\\
&=m_1\Big(\cE[T_2,\dots,T_{k-1},T_kA_1]+q_{1,k}\cE[T_2,\dots,T_{k-1},G_kA_1]\langle A_k\rangle\NN\\
&\quad+\sum_{j=1}^{k-1}\cE[T_1,\dots, T_{j-1},G_j]\big(\m[T_j,\dots,T_k]+q_{1,k}\m[T_j,\dots,T_{k-1},G_k]\langle A_k\rangle\big)\NN\\
&\quad+\sum_{j=2}^k\m[T_1,\dots,T_{j-1},G_j]\big(\cE[T_j,\dots, T_k]+q_{1,k}\cE[T_j,\dots,T_{k-1},G_k]\langle A_k\rangle\big)\NN\\
&\quad+\sum_{1\leq r\leq s\leq t\leq k}\langle M_{[r]}\odot M_{[s,t]}\rangle\langle M_{[r,s]}\odot(M_{[t,k]}A_k)\rangle\NN\\
&\quad+q_{1,k}\sum_{1\leq r\leq s\leq t\leq k}\langle M_{[r]}\odot M_{[s,t]}\rangle\langle M_{[r,s]}\odot M_{[t,k]}\rangle\langle A_k\rangle\Big)\label{eq-Erecursion}
\end{align}
and the initial condition $\cE[\emptyset]=0$. Recall that $\odot$ denotes the Hadamard product, $M_{[\cdot]}$ was defined through the recursion in~\eqref{eq-matrixMrec}, $\m[\cdot]$ was defined in~\eqref{eq-defM}, and $q_{1,k}=\frac{m_1m_k}{1-m_1m_k}$.
\end{definition}

We remark that $M_{[k]}$ is diagonal whenever $A_1=\dots=A_k=\Id$. In this case, the last two lines of~\eqref{eq-Erecursion} are readily evaluated and the recursion simplifies to
\begin{align*}
\cE[G_1,\dots,G_k]=\frac{m_1}{1-m_1m_k}\Big(&\cE[G_2,\dots,G_k]+\sum_{j=1}^{k-1}\cE[G_1,\dots,G_j]m[j,\dots,k]\\
&+\sum_{j=2}^km[1,\dots,j]\cE[G_j,\dots, G_k]\\
&+\sum_{1\leq r\leq s\leq t\leq k}m[1,\dots,r]m[r,\dots,s]m[s,\dots,t]m[t,\dots,k]\Big),
\end{align*}
where $m[\cdot]$ denotes the iterated divided differences in~\eqref{eq-itdivdif}. Note that $\cE[T_1,\dots,T_k]$ is generally not of order one, but its size is given in terms of $\eta_*$ and the number of traceless matrices among $A_1,\dots,A_k$. We give a simple bound in the following lemma. The proof is carried out in Section~\ref{sect-sizeproofs}.
\begin{lemma}\label{lem-Esize}
Under the assumptions of Lemma~\ref{lem-mEexchangemeso}, let $a$ among the matrices $A_1,\dots,A_k$ be traceless. Then,
\begin{equation}\label{eq-Eestimate}
|\cE[T_1,\dots,T_k]|\lesssim \frac{1}{\eta_*^{k-1-\lceil a/2\rceil}}.
\end{equation}
The bound is sharp if not all $\Im z_j$ have the same sign and $a$ is even.
\end{lemma}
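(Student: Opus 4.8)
The plan is to establish \eqref{eq-Eestimate} by strong induction on $k$ from the defining recursion \eqref{eq-Erecursion}, using the a priori bounds of Lemma~\ref{lem-mbounds} for $m[\cdot]$, $\m[\cdot]$ and $\|M_{[\cdot]}\|$, the elementary estimate $|\langle B_1\odot B_2\rangle|\le\|B_1\|\,\|B_2\|$ for the Hadamard traces, and the bulk bounds $|m_j|\lesssim1$, $|q_{i,j}|\lesssim\eta_*^{-1}$. The base cases $k=0$ ($\cE[\emptyset]=0$) and $k=1$, where $\cE[T_1]=\langle A_1\rangle m_1'm_1^3$ by \eqref{eq-Esmallest}, which is $\cO(1)$ if $\langle A_1\rangle\neq0$ and vanishes if $A_1$ is traceless, are immediate.

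For the inductive step one estimates the groups of terms on the right of \eqref{eq-Erecursion}. Each is a product of $m_1$, possibly $q_{1,k}$ and $\langle A_k\rangle$, with an $\cE$-value of a chain of length $<k$, an $\m$-value, or a Hadamard trace $\langle M_{[\cdot]}\odot(\cdots)\rangle$, in which the traceless matrices among $A_1,\dots,A_k$ get distributed over the sub-chains. Here one uses that $\langle A_k\rangle=0$ when $A_k$ is traceless, so that the $q_{1,k}\langle A_k\rangle$-terms contribute only when $A_k$ does not count towards $a$; that a product such as $A_kA_1$ in the first line may fail to be traceless but then loses at most one unit from the count while the chain length also drops by one; and the inequality $\lceil a_1/2\rceil+\lceil a_2/2\rceil\ge\lceil(a_1+a_2)/2\rceil$ to recombine the savings carried by the two factors.

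The subtle point --- and what I expect to be the main obstacle --- is that a naive term-by-term estimate of \eqref{eq-Erecursion} overshoots the claimed exponent by one power of $\eta_*$. Already for $k=2$, $A_1=A_2=\Id$ one has $\cE[G_1,G_2]=\tfrac{m_1}{1-m_1m_2}\big[(1+m[1,2])\cE[G_2]+m[1,2]\cE[G_1]+(m^4)[1,2]\big]$, in which each of the three terms is of size $\eta_*^{-2}$ in the regime $z_1\approx\overline{z_2}$, yet their sum is only of size $\eta_*^{-1}$. The resolution is to make this cancellation manifest before estimating: the triple sum $\sum_{1\le r\le s\le t\le k}$ and the two sums over $j$ in \eqref{eq-Erecursion} are re-summed via the Leibniz product rule for divided differences, $(h_1h_2)[z_a,\dots,z_b]=\sum_{j=a}^b h_1[z_a,\dots,z_j]\,h_2[z_j,\dots,z_b]$, together with its generalization to $M$-chains and Hadamard traces (the same sum rules that underlie the proof of Lemma~\ref{lem-mbounds} in \cite{CES-optimalLL}), and then combined with the prefactor $m_1(1+q_{1,k})=m_1/(1-m_1m_k)$; after this recombination the would-be leading contributions of order $\eta_*^{-(k-\lceil a/2\rceil)}$ telescope away. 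Equivalently, one may verify inductively that $\cE$ reduces to a compact divided-difference expression --- in the all-identity case $\cE[G_1,\dots,G_k]=(m^3m')[z_1,\dots,z_k]$, a single divided difference of the bulk-analytic function $m^3m'=\tfrac14(m^4)'$, from which \eqref{eq-Eestimate} is immediate, and in general a divided-difference-type combination of $M$-chains --- the verification then reducing to the Leibniz rules above and the Dyson relation $m'=m^2/(1-m^2)$.

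Finally, the sharpness assertion follows by specializing to a configuration $z_j\approx\overline{z_{j'}}$ in which every a priori bound invoked above is attained, and tracking whether the telescoping removes the leading term: for $a$ even it does not, and $|\cE|\sim\eta_*^{-(k-1-a/2)}$, while for $a$ odd one additionally meets a vanishing factor $\langle\mathring{A}_j\rangle$, producing an extra power of $\eta_*$. Beyond the re-summation identities, the only other point requiring care is the bookkeeping of the $\lceil a/2\rceil$-savings through the recursion.
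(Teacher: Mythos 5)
Your proposal is correct in substance and, in particular, you have put your finger on exactly the right difficulty: a naive term-by-term estimate of \eqref{eq-Erecursion} loses one power of $\eta_*$, and your $k=2$ computation demonstrating this is accurate. The paper's proof is also an induction on $k$, but it organizes the argument differently and thereby avoids any explicit re-summation. The key observation is that the overshoot occurs \emph{only} when $\langle A_k\rangle\neq0$ \emph{and} $\Im z_1\Im z_k<0$: if the signs agree, Lemma~\ref{lem-qbounds} together with the bulk assumption gives $|q_{1,k}|\lesssim|\Im m|^{-1}=\cO(1)$, and if $A_k$ is traceless the dangerous $q_{1,k}\langle A_k\rangle$-terms vanish outright; in both situations the term-by-term estimate via Lemma~\ref{lem-mbounds}, the induction hypothesis and $\lceil\tfrac{x}{2}\rceil+\lceil\tfrac{y}{2}\rceil\geq\lceil\tfrac{x+y}{2}\rceil$ already closes the induction. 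In the single remaining case ($A_k=\Id$, opposite signs) the paper does not touch the recursion at all but instead invokes the divided-difference identity \eqref{eq-Edivdif1} from Corollary~\ref{cor-Eproperties}(ii), applies the induction hypothesis to the two chains of length $k-1$ appearing there, and divides by $|z_1-z_k|\geq2\eta_*$. This is precisely the cancellation you propose to exhibit by Leibniz re-summation, packaged into one identity; your "equivalent" route via the closed formula $\cE[G_1,\dots,G_k]=(m^3m')[z_1,\dots,z_k]$ is exactly Corollary~\ref{cor-Eproperties}(iii). What the paper's organization buys is that no Leibniz rule for general $M$-chains or Hadamard traces is ever needed — a point on which your sketch is vaguest and which would require real work to make precise — while what your version would buy, if carried out, is a self-contained derivation of the divided-difference structure from Definition~\ref{def-E} (the paper only asserts this is possible and instead proves Corollary~\ref{cor-Eproperties} by the meta argument of Appendix~\ref{app-meta}). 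So: right idea and right mechanism, but you should localize the cancellation to the one case where it is actually needed and where the hypothesis $A_k=\Id$ makes the divided-difference identity available.
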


\begin{remark}
Lemma~\ref{lem-Esize} shows that the sub-leading term in~\eqref{eq-mEexchangemeso} involving $\cE[\cdot]$ may be smaller than the $\smash{\cO(N^\eps/(N\sqrt{N\eta_*}\eta_*^{k-a/2})}$ error term in some regimes. However, as we only apply~\eqref{eq-mEexchangemeso} in the form
\begin{displaymath}
\E\langle T_{[1,k]}\rangle=\m[T_1,\dots,T_k]+\cO\Big(\frac{1}{N\eta_*^{k-1-a/2}}+\frac{N^\eps}{N\, \sqrt{N\eta_*}\ \eta_*^{k-a/2}}\Big)
\end{displaymath}
for the proof of the CLT in the resolvent case in Section~\ref{sect-proof-resolventCLT}, a more careful resolution of the error is not needed.
\end{remark}

Note that applying~\eqref{eq-Erecursion} once yields the formula
\begin{align}
\cE[T_1]=\langle A_1\rangle \frac{m_1^5}{1-m_1^2}=\langle A_1\rangle m_1'm_1^3,\label{eq-Esmallest}
\end{align}
from which we readily reobtain~(93) of~\cite{CES-functCLT} by Lemma~\ref{lem-mEexchangemeso}.

\medskip
Having identified the function $\cE[\cdot]$ as the $1/N$ term of $\E\langle T_{[1,k]}\rangle$, we may use identities that are valid on the random matrix side, i.e., the left-hand side of~\eqref{eq-mEexchangemeso}, to derive further identities for $\cE[\cdot]$ (cf. the "meta argument" below~\cite[Lem.~4.1]{CES-optimalLL}). They are listed in Corollary~\ref{cor-Eproperties} below and the proof is given in Appendix~\ref{app-meta}. In fact, these identities can also be proven from Definition~\ref{def-E} directly, however, using~\eqref{eq-mEexchangemeso} allows for a shorter proof. Note that $\m[\cdot]$ satisfies the same properties by~\cite[Lem.~5.4]{CES-thermalization}.

\begin{corollary}\label{cor-Eproperties}
Let $k\in\N$ and $(T_1,\dots,T_k)$ be an ordered set of $\C^{N\times N}$ matrices of the form $T_j=G_jA_j$. Then
\begin{itemize}
\item[(i)] $\cE[\cdot]$ is cyclic in the sense that $\cE[T_1,\dots,T_k]=\cE[T_2,\dots,T_k,T_1]$.
\item[(ii)] Whenever $z_1\neq z_k$ and $A_k=\Id$, we have
\begin{equation}\label{eq-Edivdif1}
\cE[T_1,\dots,T_{k-1},G_k]=\frac{\cE[T_2,\dots,T_{k-1},G_kA_1]-\cE[T_1,\dots,T_{k-1}]}{z_k-z_1}.
\end{equation}
\item[(iii)] Whenever $A_1=\dots=A_k=\Id$ and the spectral parameters $z_1,\dots,z_k$ are distinct, $\cE[\cdot]$ has a divided difference structure, i.e.,
\begin{equation}\label{eq-Edivdif2}
\cE[G_1,\dots,G_k]=\frac{\cE[G_2,\dots,G_k]-\cE[G_1,\dots,G_{k-1}]}{z_k-z_1}
\end{equation}
and we have the closed formula
\begin{equation}\label{eq-Edivdif3}
\cE[G_1,\dots,G_k]=\sum_{j=1}^k\prod_{i\neq j}\frac{1}{z_i-z_j}\cE[G_j]=\sum_{j=1}^k\prod_{i\neq j}\frac{m[i,j]}{m_i-m_j}\cE[G_j]
\end{equation}
with $m[\cdot]$ as in~\ref{eq-1slotdivdif} and $\cE[G_j]=m_j'm_j^3$. Moreover, $\cE[\cdot]$ is invariant under any permutation of $z_1,\dots,z_k$ in this case.
\end{itemize}
\end{corollary}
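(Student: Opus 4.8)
The plan is to deduce all three identities from Lemma~\ref{lem-mEexchangemeso}, which pins down $\cE[\cdot]$ as the $\kappa_4/N$-coefficient in the expansion of $\E\langle T_{[1,k]}\rangle$, rather than by unwinding the recursion in Definition~\ref{def-E}. This is the ``meta argument'' of~\cite{CES-optimalLL}: every identity we want for $\cE[\cdot]$ has a counterpart that holds \emph{identically} for the random variable $\langle T_{[1,k]}\rangle$, and since the $\m[\cdot]$-parts of the expansion are already known to satisfy the corresponding identities by~\cite[Lem.~5.4]{CES-thermalization}, the claim for $\cE[\cdot]$ is obtained by cancellation. Concretely, part~(i) is the image of cyclicity of the trace, $\langle T_1\cdots T_k\rangle=\langle T_2\cdots T_kT_1\rangle$; part~(ii), for $A_k=\Id$, is the image of the resolvent identity~\eqref{eq-resolventid} together with cyclicity of the trace, i.e.
\[
\langle T_1\cdots T_{k-1}G_k\rangle=\langle G_kG_1A_1T_2\cdots T_{k-1}\rangle=\frac{\langle T_2\cdots T_{k-1}(G_kA_1)\rangle-\langle T_{[1,k-1]}\rangle}{z_k-z_1};
\]
and part~(iii) is the specialisation of~(ii) to $A_1=\dots=A_k=\Id$ together with~(i).

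For each identity I would take expectations, substitute~\eqref{eq-mEexchangemeso} for every resolvent chain that appears (legitimate since, once the spectral parameters are fixed, the coefficient $(z_k-z_1)^{-1}$ is an $N$-independent constant and~\eqref{eq-mEexchangemeso} is linear in the chain), and cancel the $\m[\cdot]$-contributions using the matching identity for $\m[\cdot]$. On a macroscopic scale ($\eta_*\sim1$) the error in~\eqref{eq-mEexchangemeso} is $\cO(N^{-3/2+\eps})=o(N^{-1})$, so what survives at order $N^{-1}$ is $\kappa_4/N$ times the difference of the two $\cE[\cdot]$-expressions; multiplying by $N/\kappa_4$ and letting $N\to\infty$ forces that difference to vanish. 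The relation obtained this way is genuine because $\cE[\cdot]$ is built in Definition~\ref{def-E} purely from $m_j$, $q_{i,j}$, normalised traces of products of the $M_{[\cdot]}$-matrices and the $A_j$, and Hadamard products, none of which is $N$-dependent for fixed $N$-independent data; the general (possibly $N$-dependent) case then follows since these identities are algebraic in the underlying scalars. One small point to flag: the comparison needs $\kappa_4\neq0$, which is harmless because $\cE[\cdot]$ does not depend on the Wigner ensemble --- the recursion~\eqref{eq-Erecursion} never involves $\kappa_4$ --- so one simply runs the argument for an ensemble with $\kappa_4\neq0$.

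Part~(iii) needs one extra, purely analytic step on top of~(i)--(ii). Specialising~\eqref{eq-Edivdif1} to $A_1=\dots=A_k=\Id$ gives~\eqref{eq-Edivdif2}, which says precisely that $\cE[G_1,\dots,G_k]$ is the iterated divided difference at the nodes $z_1,\dots,z_k$ of the scalar function $z\mapsto\cE[G(z)]=m'(z)m(z)^3$; permutation invariance is then immediate, and the Lagrange form of an iterated divided difference produces the closed formula~\eqref{eq-Edivdif3}, the second representation there following from $(z_i-z_j)^{-1}=m[i,j]/(m_i-m_j)$, which is just the definition of the divided difference $m[i,j]$ combined with the Dyson equation.

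The main obstacle is not a computation but making the meta argument airtight: one has to check that~\eqref{eq-mEexchangemeso} is applied uniformly to all of the finitely many chains appearing once each random-matrix identity is expanded, that combining them with the fixed rational coefficients still leaves an $o(N^{-1})$ remainder, and that passing to the limit is justified (equivalently, that the resulting identities are algebraic in $m_j$, $q_{i,j}$ and the relevant traces, so it suffices to verify them on $N$-independent data). I note that an alternative, fully self-contained proof is available by induction on $k$ directly from~\eqref{eq-Erecursion}, matching the recursion for the two sides of each identity; this avoids Lemma~\ref{lem-mEexchangemeso} entirely but is considerably more tedious, which is why I would route the argument through it.
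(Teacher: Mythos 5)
Your route is the same as the paper's: Appendix~\ref{app-meta} proves Corollary~\ref{cor-Eproperties} by exactly this meta argument, transporting cyclicity of the trace and the resolvent identity through Lemma~\ref{lem-mEexchangemeso} and the matching identities for $\m[\cdot]$ from~\cite[Lem.~5.4]{CES-thermalization}, and then obtaining (iii) from (i)--(ii) via the Lagrange form of the iterated divided difference of $z\mapsto m'(z)m(z)^3$. Your remark that one must run the comparison for an ensemble with $\kappa_4\neq0$ (harmless, since the recursion~\eqref{eq-Erecursion} defining $\cE[\cdot]$ does not involve $\kappa_4$) is correct and worth stating.

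The one step you flag but do not actually supply is the passage from the asymptotic relation, valid only as $N\to\infty$, to an exact identity at a fixed $N$ with fixed, inherently $N$-dependent matrices $A_j$. Your proposed fix --- the identities are ``algebraic in the underlying scalars, so it suffices to verify them on $N$-independent data'' --- is not yet a proof: the scalars entering $\cE[\cdot]$ include the Hadamard-product traces $\langle M_{[r]}\odot M_{[s,t]}\rangle$, i.e.\ diagonal-overlap data beyond ordinary traces of products, and one would have to show that the tuples of such scalars realizable by matrices of growing dimension are rich enough to force the polynomial identity. The paper closes this with a concrete device: embed the data into dimension $NL$ via $\mathscr{A}_j:=A_j\otimes\Id_{L\times L}$ and a Wigner matrix $\mathscr{W}\in\C^{NL\times NL}$ built from the same entry distributions. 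Every scalar ingredient of $\m[\cdot]$ and $\cE[\cdot]$ (traces of products and of Hadamard products alike) is invariant under this embedding, so $\cE$ for the enlarged problem coincides with $\cE$ for the original one, while the error in~\eqref{eq-mEexchangemeso} becomes $\cO\big((NL)^{\eps}(NL\eta_*)^{-1/2}\eta_*^{-(k-a/2)}\big)$; letting $L\to\infty$ at fixed $N$, $z_j$, $A_j$ then yields the exact identity. You should add this tensorization (or an equivalent density argument, which you have not provided) to make the proof complete.
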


\subsection{Statement of the Resolvent Central Limit Theorem}\label{sect-resolventCLT}
The main result of this section is a CLT for resolvents that identifies the joint distribution of multiple modes of the type~\eqref{def-modes}, i.e., $\smash{X^{(k_i,a_i)}_{\alpha_i}}$ with different $k_i$, $a_i$, and $\alpha_i$, as asymptotically Gaussian in the sense of moments. We start by defining the set function $\m[\cdot|\cdot]$, which characterizes the limiting covariance of the random variables $\smash{X_{\alpha}}$ and $\smash{X_{\beta}}$ involving two distinct multi-indices $\alpha$ and $\beta$. Note that we only use the following recursive definition in the present work. However, closed formulas are obtained in the companion paper~\cite{JRcompanion}.

\begin{definition}\label{def-M}
Let $S_1=(T_1,\dots,T_{k'})$ and $S_2=(T_{k'+1},\dots,T_{k'+\ell'})$ be two (ordered) finite sets of complex $N\times N$-matrices of the form $T_j=G_jA_j$. We define $\m[\cdot|\cdot]$ as the (deterministic) function of pairs of sets $S_1,S_2$ with values in $\C$ and the following properties:
\begin{itemize}
\item[(i)] Symmetry: $\m[\cdot|\cdot]$ is symmetric under the interchanging of its arguments, i.e., for any sets $B_1\subseteq S_1,B_2\subseteq S_2$ we have
\begin{displaymath}
\m[(T_i,i\in B_1)|(T_j, j\in B_2)]=\m[(T_j, j\in B_2)|(T_i,i\in B_1)].
\end{displaymath}
\item[(ii)] Initial condition: For any sets $B_1\subseteq S_1,B_2\subseteq S_2$ we have
\begin{equation}\label{eq-Minitial}
\m[(T_i,i\in B_1)|\emptyset]=\m[\emptyset|(T_j, j\in B_2)]=0.
\end{equation}
\item[(iii)] Recursion: Let $B_1\subseteq S_1$ and $B_2\subseteq S_2$ be ordered subsets with $|B_1|=k\leq k'$ and $|B_2|=\ell\leq\ell'$ elements, respectively. We index the matrices in $B_1$ by $[k]$ and the matrices in $B_2$ by $[k+1,k+\ell]$. The function $\m[\cdot|\cdot]$ satisfies the following linear recursion
\begin{align}
&\m[T_1,\dots,T_k|T_{k+1},\dots,T_{k+\ell}]\NN\\
&=m_1\Bigg(\m[T_2,\dots,T_{k-1},G_kA_kA_1|T_{k+1},\dots,T_{k+\ell}]\NN\\
&\quad+q_{1,k}\m[T_2,\dots,T_{k-1},G_kA_1|T_{k+1},\dots,T_{k+\ell}]\langle A_k\rangle\label{eq-Mrecursion}\\
&\quad+\sum_{j=1}^{k-1}\m[T_1,\dots,T_{j-1},G_j|T_{k+1},\dots,T_{k+\ell}]\big(\m[T_j,\dots,T_k]+q_{1,k}\m[T_j,\dots,T_{k-1},G_k]\langle A_k\rangle\big)\NN\\
&\quad+\sum_{j=2}^k\m[T_1,\dots,T_{j-1},G_j]\Big(\m[T_j,\dots,T_k|T_{k+1},\dots,T_{k+\ell}]\NN\\
&\quad\quad+q_{1,k}\m[T_j,\dots,T_{k-1},G_k|T_{k+1},\dots,T_{k+\ell}]\langle A_k\rangle\Big)+\mathfrak{s}_{GUE}+\mathfrak{s}_\kappa\Bigg)\NN
\end{align}
where the source terms $\mathfrak{s}_{GUE}$ and $\mathfrak{s}_\kappa$ are given by
\begin{align}
\mathfrak{s}_{GUE}&:=\sum_{j=1}^\ell \Big(\m[T_1,\dots,T_k,T_{k+j},\dots,T_{k+j-1},G_{k+j}]\NN\\
&\quad\quad +q_{1,k}\m[T_1,\dots,T_{k-1},G_k,T_{k+j},\dots,T_{k+j-1},G_{k+j}]\langle A_k\rangle\Big)\label{eq-sourceGUE}\\
\mathfrak{s}_{\kappa}&:=\kappa_4\sum_{r=1}^k\sum_{s=k+1}^{k+\ell}\Big(\sum_{t=k+1}^s\langle M_{[r]}\odot M_{(s,\dots,k+\ell,k+1,\dots,t)}\rangle\langle(M_{[r,k]}A_k)\odot M_{[t,s]}\rangle\NN\\
&\quad\quad+\sum_{t=s}^{k+\ell}\langle M_{[r]}\odot M_{[s,t]}\rangle\langle(M_{[r,k]}A_k)\odot M_{(t,\dots,k+\ell,k+1,\dots,s)}\rangle\Big)\NN\\
&\quad+\kappa_4q_{1,k}\sum_{r=1}^k\sum_{s=k+1}^{k+\ell}\Big(\sum_{t=k+1}^s\langle M_{[r]}\odot M_{(s,\dots,k+\ell,k+1,\dots,t)}\rangle\langle M_{[r,k]}\odot M_{[t,s]}\rangle\NN\\
&\quad\quad+\sum_{t=s}^{k+\ell}\langle M_{[r]}\odot M_{[s,t]}\rangle\langle M_{[r,k]}\odot M_{(t,\dots,k+\ell,k+1,\dots,s)}\rangle\Big)\langle A_k\rangle.\label{eq-sourcekappa}
\end{align}
Recall that $\odot$ denotes the Hadamard product, $q_{1,k}=\frac{m_1m_k}{1-m_1m_k}$ with $m_1,m_k$ as in~\eqref{eq-defm}, $M_{(\dots)}$ was introduced in~\eqref{eq-matrixMordered}, and $\m[\cdot]$ was defined in~\eqref{eq-defM}.
\end{itemize}
\end{definition}

\begin{remark}
The special role of $m_1$ in~\eqref{eq-Mrecursion} is a result of the identity~\eqref{eq-m1identity} used for the proof of Lemma~\ref{lem-covapproxmeso} in Section~\ref{sect-covapproxmeso} below. Similar to the recursion for $\m[\cdot]$ in~\cite[Lem.~4.1]{CES-optimalLL}, it is possible to derive a version of~\eqref{eq-Mrecursion} for every $j=2,\dots,k$ that singles out the factor $m_j$  instead of $m_1$ on the right-hand side, i.e.,~\eqref{eq-Mrecursion} is only one element in a family of equivalent recursions for~$\m[\cdot|\cdot]$.
\end{remark}

The linearity of the recursion and the two different types of source terms induce the decomposition~\eqref{eq-Mdecomp}, where $\m_{GUE}[\cdot|\cdot]$ satisfies~\eqref{eq-Mrecursion} for $\kappa_4=0$, and $\kappa_4\m_\kappa[\cdot|\cdot]$ satisfies~\eqref{eq-Mrecursion} without $\mathfrak{s}_{GUE}$. We remark that by~\cite[Thm.~3.4]{CES-thermalization}, both $\m[\cdot]$ and $M_{(\cdot)}$ are fully expressible as functions of $A_1,\dots,A_{k+\ell}$ and $m_1,\dots,m_{k+\ell}$. Hence, the same holds for the source term $\mathfrak{s}_{GUE}+\mathfrak{s}_\kappa$ in~\eqref{eq-Mrecursion}, eventually making $\m[\cdot|\cdot]$ a function of the same quantities.
Similarly, we have the decomposition
\begin{equation}\label{eq-mdecomp}
m[\cdot|\cdot]=m_{GUE}[\cdot|\cdot]+\kappa_4m_\kappa[\cdot|\cdot],
\end{equation}
for the function $m[\cdot|\cdot]$ defined by the relation
\begin{equation}\label{eq-2slotdivdif}
m[1,\dots,l|k+1,\dots,k+\ell]:=\m[G_1,\dots,G_k|G_{k+1},\dots,G_{k+\ell}]
\end{equation}
in the special case $A_1=\dots=A_k=\Id$.

\medskip
Next, we consider the size of $\m[\cdot|\cdot]$. We have the following bounds, which we prove in Section~\ref{sect-sizeproofs}.
\begin{lemma}\label{lem-sizems}
Under the assumptions of Lemma~\ref{lem-covapproxmeso}, we have the estimates
\begin{align}
\big|\m_{GUE}[T_1,\dots, T_k|T_{k+1},\dots,T_{k+\ell}]\big|&\lesssim\frac{1}{\eta_*^{k+\ell-\lceil (a+b)/2\rceil}},\label{eq-checkerboard}\\
\big|\m_{\kappa}[T_1,\dots, T_k|T_{k+1},\dots,T_{k+\ell}]\big|&\lesssim\frac{1}{\eta_*^{k+\ell-1-\lceil (a+b)/2\rceil}}\label{eq-mkappabound}.
\end{align}
Both bounds are sharp only if not all $\Im z_j$ have the same sign. In particular, $\m_{\kappa}[\cdot|\cdot]$ is dominated by $\m_{GUE}[\cdot|\cdot]$ on all mesoscopic scales and it holds that
\begin{equation}\label{eq-msize}
\big|\m[T_1,\dots, T_k|T_{k+1},\dots,T_{k+\ell}]\big|\lesssim\frac{1}{\eta_*^{k+\ell-\lceil (a+b)/2\rceil}}.
\end{equation}
\end{lemma}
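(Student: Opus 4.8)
The plan is to prove the two bounds \eqref{eq-checkerboard} and \eqref{eq-mkappabound} by induction on $k+\ell$, exploiting the linear recursion \eqref{eq-Mrecursion} for $\m[\cdot|\cdot]$ and the a priori bounds from Lemma~\ref{lem-mbounds} for the inputs $m[\cdot]$, $\m[\cdot]$, and $M_{(\cdot)}$. The base case is $k=\ell=1$, which is checked directly from the explicit formula \eqref{eq-Msmallest} of Example~\ref{ex-mesobounds}: the $\kappa_4$-free part is $\cO(\eta_*^{-2})$ when $\langle A_1\rangle\langle A_2\rangle\neq 0$ (i.e.\ $a+b=0$, matching $\eta_*^{-(k+\ell-\lceil(a+b)/2\rceil)}=\eta_*^{-2}$) and $\cO(\eta_*^{-1})$ otherwise ($a+b\in\{1,2\}$, matching $\eta_*^{-1}$), while the $\kappa_4$-part is $\cO(1)=\cO(\eta_*^{-(k+\ell-1-\lceil(a+b)/2\rceil)})$ as soon as $a+b\geq 0$. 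The key structural fact driving the induction is the bookkeeping of two competing resources: each factor of $m_1$ or $q_{1,k}$ in \eqref{eq-Mrecursion} is harmless (order one, since $z_1$ is in the bulk on mesoscopic scales, using $|q_{i,j}|\lesssim\eta_*^{-1}$ only when $i\neq j$ — and here the off-diagonal $q_{1,k}$ may cost an $\eta_*^{-1}$, but it is always accompanied by a factor $\langle A_k\rangle$, which forces $A_k=\Id$ and hence reduces the relevant traceless count, keeping the total exponent balanced), and each application of the recursion splits $k+\ell$ matrices into strictly smaller chains whose $\eta_*$-exponents, by the induction hypothesis, add up correctly.

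The main body of the argument is a term-by-term audit of the six groups of terms on the right-hand side of \eqref{eq-Mrecursion}. First I would treat the ``merge'' terms in lines one and two of the recursion: here the chain on the left shortens by one or two matrices and the new matrix $G_kA_kA_1$ (resp.\ $G_kA_1$) carries the traceless weight of $A_1$ (resp.\ of $A_1$ together with the reduction forced by $\langle A_k\rangle$); one checks $\lceil a'/2\rceil\geq\lceil a/2\rceil-1$ in each case and verifies the exponent matches. Second, the two sums over $j$ that factor off an $\m[\cdot|\cdot]$ times an $\m[\cdot]$ (or $M$-indexed $\m[\cdot]$): here I combine the induction hypothesis on the shorter $\m[\cdot|\cdot]$ factor with the third bound of Lemma~\ref{lem-mbounds} for $\m[\cdot]$, and the traceless counts of the two pieces partition (up to the rounding from the ceiling, which is where a careful case split on parity of the sub-counts is needed — this parity bookkeeping is the delicate part). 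Third, the GUE source term $\mathfrak{s}_{GUE}$ in \eqref{eq-sourceGUE}: each summand is a single $\m[\cdot]$-value over a chain of length $k+\ell+1$ (the two chains concatenated with one resolvent absorbed), so Lemma~\ref{lem-mbounds} gives $\eta_*^{-(k+\ell-\lceil(a+b)/2\rceil)}$ directly after accounting for the cyclic reshuffling $T_{k+j},\dots,T_{k+j-1},G_{k+j}$ which does not change the traceless count; this is what produces the leading exponent in \eqref{eq-checkerboard}. Fourth, the $\kappa_4$ source term $\mathfrak{s}_\kappa$ in \eqref{eq-sourcekappa}: each summand is a product of two traces of Hadamard products of $M$-matrices, and one uses $\|M_{(\cdot)}\|\lesssim\eta_*^{-(\text{len}-1-\lceil(\#\text{traceless})/2\rceil)}$ together with $|\langle X\odot Y\rangle|\leq\|X\|\,\|Y\|$; summing the four index variables $r,s,t$ over ranges of size $\cO(1)$ (constants $k,\ell$), one finds the exponent is smaller by exactly one than the GUE bound, giving \eqref{eq-mkappabound}. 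Finally, \eqref{eq-msize} is immediate from \eqref{eq-Mdecomp}, \eqref{eq-checkerboard}, \eqref{eq-mkappabound}, and the fact that on mesoscopic scales $\eta_*\lesssim 1$, so $\eta_*^{-(k+\ell-1-\lceil(a+b)/2\rceil)}\leq\eta_*^{-(k+\ell-\lceil(a+b)/2\rceil)}$; sharpness when not all $\Im z_j$ share a sign is inherited from the sharpness clauses in Lemma~\ref{lem-mbounds} applied to the dominant $\mathfrak{s}_{GUE}$ summand (e.g.\ the $j=1$ term with all matrices equal to $\Id$ reduces to an iterated divided difference $m[1,\dots,k+\ell+1]$, which is sharp there).

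The main obstacle is the parity bookkeeping hidden in the ceilings $\lceil a/2\rceil$: when the recursion splits a chain with $a$ traceless matrices into two subchains with $a_1$ and $a_2=a-a_1$ traceless matrices, one does not in general have $\lceil a_1/2\rceil+\lceil a_2/2\rceil=\lceil a/2\rceil$ — it can be larger by one when both $a_1$ and $a_2$ are odd. The resolution, exactly as in the analogous estimates in \cite{CES-optimalLL} and \cite{CES-thermalization}, is that such a term then comes with an extra structural gain: either the shared ``cut point'' matrix $G_j$ (with no deterministic matrix attached, hence contributing no traceless weight but still one resolvent) improves one of the two $\m[\cdot]$ bounds by the needed $\eta_*$, or the term is accompanied by a compensating $q_{1,k}\langle A_k\rangle$ factor. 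I would isolate this as a short combinatorial sublemma on the function $n\mapsto\lceil n/2\rceil$ and the admissible splittings, and then the inductive step goes through uniformly. A secondary, purely technical point is keeping the constants $k$ and $\ell$ genuinely fixed so that all the finite sums over $j,r,s,t$ contribute only $N$-independent multiplicative constants, which is consistent with the standing assumption that $k,\ell\in\N$ are fixed.
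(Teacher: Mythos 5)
Your overall plan—induction on $k+\ell$ via the linear recursion \eqref{eq-Mrecursion}, combined with Lemma~\ref{lem-mbounds} for the $\m[\cdot]$ factors—sounds plausible at first glance, but the term-by-term bound on the recursion does \emph{not} close, and the paper avoids the recursion in precisely the problematic configuration. The obstacle is not the parity of the ceilings (which you worry about but which in fact works \emph{in your favor}: the inequality $\lceil x/2\rceil + \lceil y/2\rceil \geq \lceil (x+y)/2\rceil$ only ever \emph{improves} a product of sub-bounds, so no "structural gain" or sublemma is needed there). The genuine obstacle is the $q_{1,k}$ factor when $\Im z_1$ and $\Im z_k$ have opposite signs and $A_k=\Id$, i.e.\ $\langle A_k\rangle=1$: then $|q_{1,k}|\sim\eta_*^{-1}$, and this cost is \emph{not} compensated in the $j$-sum contributions.

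Concretely, take $k=3$, $\ell=2$, with $A_1,A_2,A_4,A_5$ traceless and $A_3=\Id$ (so $a=2$, $b=2$, target $\eta_*^{-(5-\lceil 4/2\rceil)}=\eta_*^{-3}$), and choose signs $\Im z_1,\Im z_2,\Im z_4,\Im z_5>0>\Im z_3$. The $j=k=3$ summand in the second $j$-sum of \eqref{eq-Mrecursion} is
\begin{equation*}
m_1\,\m[T_1,T_2,G_3]\cdot q_{1,3}\,\m_{GUE}[G_3\,|\,T_4,T_5]\,\langle A_3\rangle.
\end{equation*}
Here $\m[T_1,T_2,G_3]$ saturates the Lemma~\ref{lem-mbounds} bound $\eta_*^{-1}$ (three resolvents, two traceless, mixed signs), $q_{1,3}\sim\eta_*^{-1}$, and $\m_{GUE}[G_3\,|\,T_4,T_5]$ genuinely attains order $\eta_*^{-2}$ (its $\langle A_4A_5\rangle\,m_5'm_3'(1-m_5m_3)^{-2}$ contribution is sharp since $\Im z_3,\Im z_5$ have opposite signs). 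The product is $\sim\eta_*^{-4}$, which \emph{exceeds} the target by a full power of $\eta_*^{-1}$. So either there is a cancellation between recursion terms that a naive absolute-value audit cannot see, or one must reorganize the expansion. Your balancing argument ("$q_{1,k}$ is always accompanied by $\langle A_k\rangle$, which forces $A_k=\Id$ and reduces the traceless count") only handles the \emph{first} appearance of $q_{1,k}$ (the line-two term, where the shortened chain indeed absorbs the $\eta_*^{-1}$); it does not save the $j$-sums. Incidentally, your statement that $|q_{i,j}|\lesssim\eta_*^{-1}$ "only when $i\neq j$" is also not the right dichotomy—the relevant distinction (Lemma~\ref{lem-qbounds}) is whether $\Im z_i$ and $\Im z_j$ have the same sign, not whether the indices coincide.

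What the paper actually does sidesteps this entirely. For \eqref{eq-checkerboard}, it does \emph{not} run the recursion for general deterministic matrices; it first establishes the all-identity bounds (Lemma~\ref{lem-moremsizes}) using a sign case split, where the opposite-sign case is handled via the \emph{divided-difference identity} (Corollary~\ref{cor-mproperties}(iii)), which collapses $|z_1-z_k|\geq 2\eta_*$ into the denominator and never produces a naked $q_{1,k}$. It then invokes the closed non-crossing formula from~\cite{JRcompanion}, where the vanishing of traces $\langle\prod_{j\in B}A_j\rangle$ over singleton Kreweras blocks containing one traceless matrix is what enforces the exponent $\lceil(a+b)/2\rceil$. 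For \eqref{eq-mkappabound}, the paper runs the recursion only in Case 2 (all $A_j$ traceless, so $\langle A_k\rangle=0$ and the $q_{1,k}$ terms vanish identically), and uses contour integration or the divided-difference/resolvent identity in Case 1 (some $A_j=\Id$), again avoiding a bare $q_{1,k}$. If you want to carry out a recursion-driven induction, you would need to build in the same two-case sign split and switch to the divided-difference representation of $\m[\cdot|\cdot]$ whenever $\Im z_j,\Im z_{j+1}$ differ in sign and some $A_j=\Id$, rather than estimating the recursion literally term by term.
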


After this preparation, we state a CLT for resolvents which generalizes ~\cite[Thm.~4.1]{CES-functCLT} to handle resolvent chains of arbitrary length in the setting considered. The proof follows by induction on the number of factors $\smash{X_{\alpha_j}^{(k_j,a_j)}}$ using the bounds from Lemma~\ref{lem-covapproxmeso} and its proof as input. We give it in Section~\ref{sect-proof-resolventCLT}.

\begin{theorem}[CLT for resolvents]\label{thm-resolventCLTmeso}
Fix $p\in\N$, let $\alpha_1,\dots,\alpha_p$ be multi-indices, and let $W$ be a Wigner matrix satisfying Assumption~\ref{as-Wigner}. Moreover, for every $j=1,\dots,p$ pick a set of spectral parameters $\smash{z_1^{(j)},\dots,z_{k_j}^{(j)}}$ such that either all sets satisfy Case 1 or all sets satisfy Case~2 of Assumption~\ref{as-spectralmeso}, and denote $\smash{\eta_*=\min_{i,j}|\Im z_i^{(j)}|}$. Moreover, for every $j=1,\dots,p$, pick deterministic matrices $A_1^{(j)},\dots,A_{k_j}^{(j)}$ with $\|A_i^{(j)}\|\lesssim1$ such that $a_j$ of them are traceless. Then,
\begin{align}
N^p\E\Big(\prod_{j=1}^p X_{\alpha_j}^{(k_j,a_j)}\Big)=\sum_{Q\in Pair([p])}\prod_{\{i,j\}\in Q}\m[\alpha_i|\alpha_j]+\cO\Big(\frac{N^\eps}{\sqrt{N\eta_*}\ \prod_{l=1}^p\eta_*^{k_l-a_l/2}}\Big)\label{eq-resolventCLTmeso}
\end{align}
for any $\eps>0$. Here, $\m[\cdot|\cdot]$ is as in Definition~\ref{def-M} and $Pair(S)$ denotes the pairings of a set~$S$. Equation~\eqref{eq-resolventCLTmeso} establishes an asymptotic version of Wick's rule and hence identifies the joint limiting distribution of the random variables $\smash{(X_{\alpha_j}^{(k_j,a_j)})_j}$ as asymptotically complex Gaussian in the sense of moments in the limit $N\eta_*\rightarrow\infty$.
\end{theorem}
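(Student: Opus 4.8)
The plan is to prove the asymptotic Wick rule \eqref{eq-resolventCLTmeso} by induction on the number $p$ of resolvent-chain modes, using a \emph{cumulant expansion} (self-improving/master inequality scheme) in the Wigner entries as the basic analytic tool. The base cases $p=1$ (where the claim is just that $\E X_{\alpha_1}=0$, so the right-hand side is an empty sum plus error) and $p=2$ (which is precisely the second-order local law of Lemma~\ref{lem-covapproxmeso}, since $Pair([2])$ has one element and $\m[\alpha_1|\alpha_2]$ appears) are already available. For the inductive step, fix $p\geq3$ and write $P:=\prod_{j=1}^p X_{\alpha_j}^{(k_j,a_j)}$. One of the factors, say $X_{\alpha_1}=\langle T^{(1)}_{[1,k_1]}\rangle-\E\langle\cdot\rangle$, is expanded: using the identity $zG=WG-\Id$ together with $\underline{WG}$-type cumulant expansions (as in~\cite{CES-optimalLL,CES-functCLT}), each derivative $\partial_{w_{ab}}$ either hits a resolvent \emph{inside} $X_{\alpha_1}$ (self-energy renormalization, producing lower-order terms and the deterministic structure encoded in $M_{[\cdot]}$), or hits a resolvent in one of the \emph{other} factors $X_{\alpha_j}$, $j\geq2$ (a ``cross'' term which lowers $p$ by one after pairing $\alpha_1$ with $\alpha_j$), or hits a second resolvent inside $X_{\alpha_1}$ contributing the subleading $\kappa_4$ and higher-cumulant pieces. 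The combinatorics of which pairs of derivatives land in which factor is exactly what produces the sum over pairings $Q\in Pair([p])$; the Gaussian (second-cumulant) terms give the pairings, and the fourth- and higher-cumulant terms feed into the $\m_\kappa[\cdot|\cdot]$ part of $\m[\cdot|\cdot]$ via the source term $\mathfrak{s}_\kappa$ in Definition~\ref{def-M}, while all genuinely higher-order contributions (three or more derivatives in a single product, or unmatched factors) are absorbed into the error.

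Concretely, I would first reduce to a single spectral parameter per resolvent and bookkeep the chains using the interval notation $T_{[a,b]}$, then run the cumulant expansion on $\E\big[\,\langle W T^{(1)}_{[1,k_1\rangle}G^{(1)}_{k_1}\rangle\cdot\prod_{j=2}^p X_{\alpha_j}\big]$. The second-order term splits as: (a) both $W$'s contract within the first chain $\Rightarrow$ reproduces, via the multi-resolvent local law Theorem~\ref{thm-multiG-LL} and the recursion \eqref{eq-matrixMrec}, the renormalized first chain $\m[\alpha_1]$ times $\E\prod_{j\geq2}X_{\alpha_j}$, which vanishes up to error since $\langle T^{(1)}\rangle-\m[\alpha_1]$ is the fluctuation; (b) one $W$ in chain $1$ and one in chain $j$ $\Rightarrow$ after replacing all other resolvents by their deterministic $M$-approximations (local law), this is $\m[\alpha_1|\alpha_j]$ times $\E\prod_{l\neq 1,j}X_{\alpha_l}$, to which the induction hypothesis for $p-2$ applies, yielding exactly the pairings of $[p]$ containing $\{1,j\}$; (c) the fourth-cumulant analogue of (a), giving the $\kappa_4$-part of $\m[\alpha_1]$ which again only contributes through fluctuations (error), and of (b), giving the $\mathfrak{s}_\kappa$ contribution to $\m[\alpha_1|\alpha_j]$. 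Summing over $j=2,\dots,p$ and over the recursion in the first slot assembles $\sum_{Q}\prod_{\{i,j\}\in Q}\m[\alpha_i|\alpha_j]$. The error terms must be tracked through the induction; here one uses that each resolvent chain of length $k$ with $a$ traceless matrices contributes a deterministic size $\eta_*^{-(k-\lceil a/2\rceil)}$ (Lemma~\ref{lem-mbounds}) and each ``cross'' contraction gains a factor $(N\eta_*)^{-1/2}$ improvement as in \eqref{eq-2ndorderLL1meso}, so the final bound $N^\eps(N\eta_*)^{-1/2}\prod_l\eta_*^{-(k_l-a_l/2)}$ is stable under the recursion. A standard $a$-priori/self-improvement bootstrap is needed to close the estimates: one posits the bound with an unknown constant, feeds it into the cumulant expansion, and recovers it with a better constant for $N\eta_*$ large.

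The main obstacle, as usual in these resolvent-CLT arguments, is the precise matching of the \emph{deterministic} combinatorial output of the cumulant expansion with the recursive definition of $\m[\cdot|\cdot]$: one must verify that collecting all second-order contraction patterns — including the internal structure of each chain produced by resolving $\langle WT_{[1,k_1\rangle}G_{k_1}\rangle$ slot by slot — reproduces \emph{exactly} the recursion \eqref{eq-Mrecursion}, with the correct source terms $\mathfrak{s}_{GUE}$ (from splitting a cross-contracted pair of chains into a single long chain indexed by a cyclically ordered set, cf.~\eqref{eq-matrixMordered}) and $\mathfrak{s}_\kappa$ (from the fourth cumulant, where the Hadamard products $M_{[r]}\odot M_{[s,t]}$ arise from the diagonal structure of the entrywise derivatives). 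This is the step where one genuinely needs the algebraic identities underlying Lemma~\ref{lem-covapproxmeso} and its proof in Section~\ref{sect-covapproxmeso}; I would invoke those directly rather than re-derive them, treating Lemma~\ref{lem-covapproxmeso} both as the $p=2$ base case and as a black box supplying the deterministic approximation of any two-chain covariance that appears after reducing $p$ by induction. A secondary technical point is handling the chains whose deterministic size is borderline (e.g.\ when the number of traceless matrices forces a $\lceil a/2\rceil$ vs.\ $a/2$ discrepancy, as flagged in the Remark after Lemma~\ref{lem-Esize}); there one uses the slightly lossy form of \eqref{eq-mEexchangemeso} quoted in that Remark, which suffices because the CLT error already carries an extra $(N\eta_*)^{-1/2}$.
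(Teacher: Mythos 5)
Your plan matches the paper's proof essentially step for step: induction on $p$ with base cases $p=1,2$ supplied by Lemma~\ref{lem-covapproxmeso}, replacement of one factor $X_{\alpha_1}$ by its renormalized expansion \eqref{eq-Edifference1meso}, a cumulant expansion of $\langle\underline{WT^{(1)}_{[1,k_1]}}\rangle$ against the remaining product in which the cross-contractions (together with the fourth-cumulant $\mathfrak{s}_\kappa$ terms) assemble $\m[\alpha_1|\alpha_j]$ via the recursion \eqref{eq-Mrecursion}, the induction hypothesis applied to the remaining $p-1$ factors, and higher cumulants plus off-diagonal terms absorbed into the $(N\eta_*)^{-1/2}$-improved error. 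The only cosmetic difference is that the paper runs a nested induction on the length $k_1$ of the first chain (starting from $k_1=1$) to organize the slot-by-slot resolution you describe, and does not need a separate bootstrap since the a priori bounds of Lemma~\ref{lem-apriorimeso} already close the estimates.
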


\begin{remark}[Independent modes]
Note that~\eqref{eq-msize} implies that two modes $\smash{X^{(k_1,a_1)}_{\alpha_1}}$ and $\smash{X^{(k_2,a_2)}_{\alpha_2}}$ in Theorem~\ref{thm-resolventCLTmeso} are asymptotically uncorrelated whenever $a_1+a_2$ is odd and $\eta_*\ll1$. This feature is exclusive to the mesoscopic regime, as all modes contribute equally for the macroscopic regime, i.e., if $\eta_*\gtrsim1$. In the case $k=1$, we may also write the deterministic matrix as
\begin{equation}\label{eq-modesdecomp}
A=\langle A\rangle\Id+\mathring{A}_d+\mathring{A}_{od}
\end{equation}
with $A_d$ and $A_{od}$ denoting the diagonal and off-diagonal part of $\mathring{A}=A-\langle A\rangle\Id$, respectively. The three resulting modes $\langle A\rangle\Tr f(W)$, $\Tr f(W)\mathring{A}_d$, and $\Tr f(W)\mathring{A}_{od}$ are asymptotically uncorrelated as $N\eta_*\rightarrow\infty$ (cf.~\cite[Thm.~2.4]{CES-functCLT}). Since this is a consequence of $\langle B_1B_2\rangle$ and $\langle B_1\rangle\langle B_2\rangle$ vanishing whenever $B_1\neq B_2$ and $B_1,B_2\in\{\Id,\mathring{A}_d,\mathring{A}_{od}\}$, this phenomenon is exclusive to the $k=1$ case and decomposing $A_1,\dots,A_k$ for $k\geq2$ according to~\eqref{eq-modesdecomp} does not yield $3^k$ uncorrelated modes in general. 
\end{remark}

Similar to Corollary~\ref{cor-Eproperties}, we may use identities that are valid on the random matrix side, i.e., the left-hand side of~\eqref{eq-2ndorderLL1meso}, to derive further identities among the recursively defined quantities $m[\cdot|\cdot]$ and $\m[\cdot|\cdot]$. The proof is analogous to the proof of Corollary~\ref{cor-Eproperties} and hence omitted. We refer to Appendix~\ref{app-meta} for the setup of the necessary "meta argument".

\begin{corollary}\label{cor-mproperties}
Let $S_1,S_2\neq\emptyset$ be two ordered multi-sets. Then
\begin{itemize}
\item[(i)] $m[S_1|S_2]$ is invariant under any permutation of the elements of $S_1$ as well as $S_2$.
\item[(ii)] $\m[\cdot|\cdot]$ is cyclic in the sense that $\m[(T_j,j\in S_1)|T_1,\dots,T_k]=\m[(T_j,j\in S_1)|T_2,\dots,T_k,T_1]$.
\item[(iii)] Whenever the spectral parameters indexed by $S_1$ and $S_2$ are distinct, $m[\cdot|\cdot]$ has an entry-wise divided difference structure, i.e.,
\begin{equation}\label{eq-divdif1}
m[S_1|1,\dots,k]=\frac{m[S_1|2,\dots,k]-m[S_1|1,\dots,k-1]}{z_k-z_1},
\end{equation}
and we have the closed formula
\begin{align}
m[S_1|S_2]&=\sum_{(s,t)\in S_1\times S_2}\Big(\prod_{\substack{i\in S_1,\\i\neq s}}\frac{1}{z_i-z_s}\prod_{\substack{j\in S_2,\\j\neq t}}\frac{1}{z_j-z_t}\Big)m[s|t]\NN\\
&=\sum_{(s,t)\in S_1\times S_2}\Big(\prod_{\substack{i\in S_1,\\i\neq s}}\frac{m[i,s]}{m_i-m_s}\prod_{\substack{j\in S_2,\\j\neq t}}\frac{m[j,t]}{m_j-m_t}\Big)m[s|t]\label{eq-divdif2}
\end{align}
with $m[s|t]=\frac{m_s'm_t'}{(1-m_sm_t)^2}$. Recall that $m[\cdot|\cdot]$ was defined in~\eqref{eq-2slotdivdif}.
\item[(iv)] Whenever $z_1\neq z_k$ and $A_k=\Id$, we further have
\begin{align}\label{eq-divdif3}
&\m[(T_j,j\in S_1)|T_1,\dots,T_{k-1},G_k]\NN\\
&=\frac{\m[(T_j,j\in S_1)|T_2,\dots,T_{k-1},G_kA_1]-\m[(T_j,j\in S_1)|T_1,\dots,T_{k-1}]}{z_k-z_1}.
\end{align}
\end{itemize}
\end{corollary}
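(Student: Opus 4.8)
The plan is to run the same ``meta argument'' used to prove Corollary~\ref{cor-Eproperties} (set up in Appendix~\ref{app-meta}), now applied to $\m[\cdot|\cdot]$ and, via~\eqref{eq-2slotdivdif}, to $m[\cdot|\cdot]$. The starting point is that Lemma~\ref{lem-covapproxmeso} identifies $\m[\alpha|\beta]$ as the deterministic approximation of the bilinear object $N^2\E X_\alpha X_\beta$, with an error that, by the size bound of Lemma~\ref{lem-sizems}, is smaller than the leading term by a relative factor $(N\eta_*)^{-1/2}$. Hence any exact algebraic identity among the random variables $X_\alpha$ transfers to the corresponding $\m$'s: if $X_\alpha\equiv X_{\alpha'}$ identically, then $\m[\alpha|\beta]-\m[\alpha'|\beta]$ equals the difference of the two error terms in~\eqref{eq-2ndorderLL1meso}, so it is $\cO(N^\eps(N\eta_*)^{-1/2}\eta_*^{-(k+\ell)+(a+b)/2})$ for every $N$; since both $\m$'s are given by the \emph{same} universal formula (rational in the $m_j$ and multilinear in $A_1,\dots,A_{k+\ell}$ through products and Hadamard products of the $M_{(\cdot)}$'s) and the error is strictly subleading as $N\eta_*\to\infty$, the two universal expressions must coincide. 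This is shorter than unwinding Definition~\ref{def-M} directly.

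Next I would feed the appropriate exact identity into this scheme for each of the four parts. For (i), when all $A_j=\Id$ the chain $\langle G_{S_1}\rangle=\langle\prod_{i\in S_1}G_i\rangle$ is symmetric in its nodes since resolvents of the same matrix commute, so $N^2\E X_{S_1}X_{S_2}$, and hence $m[S_1|S_2]$, is invariant under permutations within $S_1$ and within $S_2$. For (ii), cyclicity of the trace gives $\langle T_1\cdots T_k\rangle\equiv\langle T_2\cdots T_kT_1\rangle$, which transfers to the claimed cyclicity of $\m[\cdot|\cdot]$ in its second argument. For (iii), combine cyclicity with the resolvent identity $G_kG_1=(G_k-G_1)/(z_k-z_1)$ (see~\eqref{eq-resolventid}): with all $A_j=\Id$ one has $\langle G_{[1,k]}\rangle=\langle G_kG_1\cdots G_{k-1}\rangle=(z_k-z_1)^{-1}(\langle G_{[2,k]}\rangle-\langle G_{[1,k-1]}\rangle)$, and taking the covariance with the fixed chain $\langle G_{S_1}\rangle$ --- which is bilinear in the two chains --- and passing to deterministic approximations yields~\eqref{eq-divdif1}. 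Iterating~\eqref{eq-divdif1} in the second slot, then using the symmetry $\m[B_1|B_2]=\m[B_2|B_1]$ from Definition~\ref{def-M}(i) to iterate it in the first slot as well, reduces $m[S_1|S_2]$ to the double Lagrange-interpolation sum over the base value $m[s|t]=\m[G_s|G_t]$ read off from~\eqref{eq-Msmallest} with $A_s=A_t=\Id$; this is~\eqref{eq-divdif2}. For (iv), with general $A_j$ but $A_k=\Id$ one may still cycle $G_k$ to the front and apply the same resolvent identity: $\langle T_{[1,k-1]}G_k\rangle=\langle G_kG_1A_1T_2\cdots T_{k-1}\rangle=(z_k-z_1)^{-1}(\langle T_2\cdots T_{k-1}(G_kA_1)\rangle-\langle T_{[1,k-1]}\rangle)$, and taking covariance with $\langle(T_j,j\in S_1)\rangle$ and passing to deterministic approximations gives~\eqref{eq-divdif3}. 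In each case one verifies that the rearranged chains still satisfy the hypotheses of Lemma~\ref{lem-covapproxmeso} with the same parameters $k,\ell,a,b$, so the error terms on the two sides of every identity are the common one above.

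The point requiring care --- just as for Corollary~\ref{cor-Eproperties} --- is that after the rearrangement both deterministic sides must genuinely be of the leading order $\eta_*^{-(k+\ell-\lceil(a+b)/2\rceil)}$ predicted by Lemma~\ref{lem-sizems}, so that the $(N\eta_*)^{-1/2}$-smaller error cannot hide a true discrepancy; for $\eta_*\sim1$ this is automatic, and in the mesoscopic regime one invokes the sharpness statements of Lemma~\ref{lem-sizems}. To pass from the asymptotic identity to an exact identity of the universal formulas, one uses that $\m[\cdot|\cdot]$ is multilinear in the deterministic matrices and depends on them only through the data appearing in Definition~\ref{def-M}, so that checking the identity on, say, $N$-independent matrices of each rank pattern suffices; this multilinearity is immediate from the recursion. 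Finally, the resolvent-identity step in (iii)--(iv) requires $z_1\neq z_k$ (and, for the closed formula~\eqref{eq-divdif2}, all nodes distinct), which is why those parts carry a distinctness hypothesis; the confluent cases follow by continuity of all quantities in $z_1,\dots,z_{k+\ell}$. No estimate beyond Lemmas~\ref{lem-covapproxmeso} and~\ref{lem-sizems} is needed, so the remaining work is bookkeeping, which is why we omit it.
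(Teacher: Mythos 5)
Your overall strategy is exactly the paper's: the proof of this corollary is the same ``meta argument'' as for Corollary~\ref{cor-Eproperties}, transferring the exact identities on the random-matrix side (commutativity of resolvents, cyclicity of the trace, and the resolvent identity~\eqref{eq-resolventid}) to the deterministic approximations via Lemma~\ref{lem-covapproxmeso}, and then obtaining~\eqref{eq-divdif2} by iterating the divided-difference relation in both slots using the symmetry from Definition~\ref{def-M}(i). The identities you feed into the scheme in parts (i)--(iv) are all correct.

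The one step that does not work as written is your mechanism for upgrading the asymptotic identity to an exact one. For fixed $N$, $z_j$, $A_j$, the two sides of, say,~\eqref{eq-divdif3} are fixed numbers whose difference is only known to be $\cO(N^\eps(N\eta_*)^{-1/2}\eta_*^{-(k+\ell)+(a+b)/2})$; this bound says nothing by itself, and ``invoking the sharpness statements of Lemma~\ref{lem-sizems}'' cannot close the gap, since sharpness of an upper bound on $\m[\cdot|\cdot]$ gives no lower bound on the \emph{difference} of two such expressions. The correct device --- spelled out in Appendix~\ref{app-meta} and needed here verbatim --- is to tensorize: replace $W$ by an $NL\times NL$ Wigner matrix and $A_j$ by $A_j\otimes\Id_{L\times L}$, observe that all ingredients of Definition~\ref{def-M} (traces of products and the Hadamard-product averages $\langle M\odot M\rangle$) are unchanged under this embedding so that $\m[\cdot|\cdot]$ itself is unchanged, and then let $L\to\infty$ at fixed $N,z_j,A_j$ so that the error term vanishes. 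You gesture at the right ingredient (the formula depends on the $A_j$ only through dimension-independent data and is multilinear), but you never supply a limit in which the error actually tends to zero while the deterministic quantities stay fixed; without that, the argument is incomplete. With the tensorization step inserted, the rest of your bookkeeping goes through.
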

Moreover, we have the following alternative integral representation for $m_{GUE}[\cdot|\cdot]$ (cf. decomposition in~\eqref{eq-mdecomp}). The proof of Corollary~\ref{cor-mintegral} is carried out in Section~\ref{sect-covapproxmeso} below.

\begin{corollary}\label{cor-mintegral}
Let $k,\ell\in\N$. Then,
\begin{align}
&m_{GUE}[1,\dots,k|k+1,\dots,k+\ell]\label{eq-integralrep}\\
&\quad=\frac{1}{2}\int\int\Big(\sum_{i=1}^k\frac{1}{(x-z_i)^2}\cdot\prod_{j\neq i}\frac{1}{x-z_j}\Big)\Big(\sum_{i=k+1}^{k+\ell}\frac{1}{(y-z_i)^2}\cdot\prod_{j\neq i}\frac{1}{y-z_j}\Big)u(x,y)\dx x\dx y\NN
\end{align}
with the kernel $u:[-2,2]\times[-2,2]\rightarrow\R$ in~\eqref{eq-kernel}.
\end{corollary}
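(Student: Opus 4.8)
The plan is to reduce the identity to the generator case $k=\ell=1$, using that both sides of~\eqref{eq-integralrep} are entry-wise divided differences of the spectral parameters in each of their two slots, and then to settle the base case by an explicit Fourier/Chebyshev computation. For the right-hand side, observe that the inner factor $\sum_{i=1}^{k}(x-z_i)^{-2}\prod_{j\neq i}(x-z_j)^{-1}$ equals $-\partial_x\prod_{i=1}^k(x-z_i)^{-1}$, which by partial fractions is precisely the divided difference of the function $w\mapsto(x-w)^{-2}$ over the nodes $z_1,\dots,z_k$. Since $u$ is integrable on $[-2,2]^2$ while $(x-z_i)^{-2}$ is bounded for the $z_i$ in compact subsets of $\C\setminus\R$, one may differentiate under the integral and interchange the divided-difference operation with the $x,y$-integrations; hence, writing $R_{k,\ell}$ for the right-hand side of~\eqref{eq-integralrep}, the function $R_{k,\ell}$ obeys the divided-difference recursion~\eqref{eq-divdif1} in its first block of spectral parameters (for $z_1\neq z_k$) and, symmetrically, in its second. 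By Corollary~\ref{cor-mproperties}(iii), and extracting the $\kappa_4$-independent part of the decomposition~\eqref{eq-mdecomp}, $m_{GUE}[\cdot|\cdot]$ satisfies the same recursion and is symmetric under interchanging its two slots (Definition~\ref{def-M}(i)). As both sides are holomorphic in each $z_j$ on $\C\setminus\R$, it suffices to match them where all spectral parameters are distinct; a double induction --- first on $\ell$ with $k=1$ fixed, then on $k$, each time using the appropriate divided-difference recursion --- then reduces the claim to showing $R_{1,1}=m_{GUE}[1|2]$.

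For the base case one must show $\tfrac12\int_{-2}^2\int_{-2}^2 u(x,y)(x-z_1)^{-2}(y-z_2)^{-2}\,\dx x\,\dx y=m_{GUE}[1|2]$, where the right-hand side is the $\kappa_4=0$, $A_1=A_2=\Id$ specialization of~\eqref{eq-Msmallest}, namely $m_1'm_2'(1-m_1m_2)^{-2}$. Writing $(x-z)^{-2}=\partial_z(x-z)^{-1}$ and pulling both derivatives outside, it suffices to compute $I(z_1,z_2):=\int_{-2}^2\int_{-2}^2 u(x,y)(x-z_1)^{-1}(y-z_2)^{-1}\,\dx x\,\dx y$, since a direct differentiation gives $\partial_{z_1}\partial_{z_2}[-\ln(1-m_1m_2)]=m_1'm_2'(1-m_1m_2)^{-2}$. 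To evaluate $I$, substitute $x=2\cos\theta$, $y=2\cos\phi$ with $\theta,\phi\in(0,\pi)$: the half-angle identities collapse the explicit kernel~\eqref{eq-kernel} to $u(2\cos\theta,2\cos\phi)=\tfrac1{\pi^2}\sum_{n\geq1}n^{-1}\sin n\theta\,\sin n\phi$, using the classical Fourier series $\sum_{n\geq1}n^{-1}\cos n\alpha=-\ln|2\sin(\alpha/2)|$. The $\theta$- and $\phi$-integrals then factorize, and via $2\sin n\theta\sin\theta=\cos(n-1)\theta-\cos(n+1)\theta$ each reduces to the standard identity $\int_0^\pi(2\cos\theta-z)^{-1}\cos k\theta\,\dx\theta=\pi(-1)^k m(z)^{k+1}(1-m(z)^2)^{-1}$ (obtained for $z>2$ by the substitution $t=\tan(\theta/2)$ and continued analytically, the branch of $m$ fixed by $\Im z\,\Im m(z)>0$). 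Summing $\sum_{n\geq1}n^{-1}(m_1m_2)^n=-\ln(1-m_1m_2)$ yields $I(z_1,z_2)=-\ln(1-m_1m_2)$ (up to the normalizing constant carried by $u$), and applying $\partial_{z_1}\partial_{z_2}$ gives $m_{GUE}[1|2]$. Coinciding spectral parameters are then handled by continuity.

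The main obstacle is the base case: collapsing the unwieldy logarithmic kernel~\eqref{eq-kernel} to the clean Chebyshev series is the one genuinely delicate step, and some care is needed with the branch of $\sqrt{z^2-4}$ (equivalently, of $m$), both when continuing the Chebyshev integral off the real axis and when justifying the interchange of $\sum_n$ with the $\theta,\phi$-integrations (uniform convergence for $z_1,z_2$ away from $[-2,2]$). The reduction step is routine once the shared divided-difference structure is noticed; alternatively, one can compare the closed formula~\eqref{eq-divdif2} for $m_{GUE}[\cdot|\cdot]$ with the partial-fraction expansion of $R_{k,\ell}$ directly, which again leaves only the generator identity to verify.
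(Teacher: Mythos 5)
Your proposal is correct, and the induction step — reducing to the generator case $k=\ell=1$ by exploiting the divided-difference structure in each slot — matches the paper's argument exactly (the paper also invokes Corollary~\ref{cor-mproperties}(iii) together with the symmetry of $m_{GUE}[\cdot|\cdot]$ for this).

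Where you genuinely diverge is in the base case. The paper's proof is not a direct computation: it cites~\cite[Thm.~2.3]{DiazJaramilloPardo2022}, which supplies the integral representation with the kernel~\eqref{eq-kernel2} for \emph{real symmetric} Gaussian matrices, and then uses the fact that evaluating~(92) of~\cite{CES-functCLT} for GOE produces exactly twice $m_{GUE}[1|2]$; the factor $\tfrac12$ in~\eqref{eq-integralrep} accounts for that. The equivalence of~\eqref{eq-kernel2} and~\eqref{eq-kernel} is then a change of variables and partial fractions. You instead compute the double integral directly: the Chebyshev substitution $x=2\cos\theta$, $y=2\cos\phi$ does indeed collapse~\eqref{eq-kernel} to $\tfrac1{\pi^2}\sum_{n\ge1}n^{-1}\sin n\theta\sin n\phi$ (this checks out via half-angle identities), the identity $J_k(z)=\pi(-1)^k m(z)^{k+1}(1-m^2)^{-1}$ is correct with the paper's branch of $m$, and the telescoping $\tfrac12(J_{n-1}-J_{n+1})=\tfrac{\pi}{2}(-1)^{n-1}m^n$ produces $I(z_1,z_2)=\sum_n (m_1m_2)^n/n=-\ln(1-m_1m_2)$, whence $\partial_{z_1}\partial_{z_2}I=m_1'm_2'(1-m_1m_2)^{-2}=m_{GUE}[1|2]$. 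Your route is more elementary and self-contained (no dependence on the GOE result or on~\cite{DiazJaramilloPardo2022}), at the cost of having to justify the Fourier expansion of the logarithmic kernel and the interchange of $\sum_n$ with the $\theta,\phi$-integrations — a standard but not entirely trivial point, since the series $\sum n^{-1}\sin n\theta$ converges only conditionally; uniform boundedness of its partial sums plus the geometric decay $|m_1m_2|^n$ handles it. The paper's route buys brevity by outsourcing the generator identity; yours buys transparency and verifiability from first principles.
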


\begin{remark}
It is readily checked that the kernel $u$ is non-negative and has a logarithmic singularity at $x=y$. Using that the two-body stability operator of the underlying Dyson equation~\eqref{eq-mselfcon} is given by $\cB(z_1,z_2)=1-m(z_1)m(z_2)$, we can also express~\eqref{eq-kernel} in terms of $1\times1$ determinants as
\begin{displaymath}
u(x,y)=-\frac{1}{2\pi^2}\Re\big(\ln(\det[\cB(x+\ri0,y+\ri0)])-\ln(\det[\cB(x+\ri0,y+\ri0)])\big)
\end{displaymath}
to match the formulas in~\cite[Sect.~7]{Vova2023} for $k=\ell=1$ and $A_1=A_2=\Id$. Note that $W$ being a GUE matrix corresponds to the choice $\beta=2$ and $\cC^{(4)}=0$ in the notation of~\cite{Vova2023}. 
\end{remark}

\section{Proofs}\label{sect-proofs}

\subsection{Proof of Lemmas~\ref{lem-Esize} and~\ref{lem-sizems} (Size of $\cE[\cdot]$ and $\m[\cdot|\cdot]$)}\label{sect-sizeproofs}

In this section, we prove the estimates identifying the size of the deterministic approximations $\cE[\cdot]$ and $\m[\cdot|\cdot]$. We start by noting two bounds for $q_{1,2}$ that are used for both proofs.

\begin{lemma}\label{lem-qbounds}
Let $z_1,z_2\in\C$ and define the constants $\eta_*:=\min\{|\Im z_1|,|\Im z_2|\}$ as well as $\zeta:=\pi^{-1}\min\{|\Im m_1|,|\Im m_2|\}$. Then,
\begin{align*}
q_{1,2}=m[1,2]=\frac{m_1m_2}{1-m_1m_2}\lesssim\begin{cases}\zeta^{-1},\quad \text{if }\Im z_1,\Im z_2\text{ have the same sign},\\ \eta_*^{-1}, \quad \text{if }\Im z_1,\Im z_2\text{ have opposite signs}.\end{cases}
\end{align*}
\end{lemma}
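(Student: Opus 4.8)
The plan is to exploit the explicit formula $m(z)=\tfrac12(-z+\sqrt{z^2-4})$ together with the defining relation $-1/m(z)=m(z)+z$ and the sign condition $\Im z\,\Im m(z)>0$. The quantity to control is $q_{1,2}=m_1m_2/(1-m_1m_2)$; since $|m_j|\le 1$ for all $z_j\in\C\setminus\R$ (a standard consequence of the semicircle Stieltjes transform bounds, or directly from $\Im m_j>0$ having the sign of $\Im z_j$ and $m_j\to 0$ as $|z_j|\to\infty$), the numerator is bounded by $1$, and the whole task reduces to a \emph{lower bound} on the two-body stability factor $|1-m_1m_2|$. So the core claim is
\begin{displaymath}
|1-m_1m_2|\gtrsim \min\{|\Im m_1|,|\Im m_2|\}\quad\text{(same sign)},\qquad |1-m_1m_2|\gtrsim\min\{|\Im z_1|,|\Im z_2|\}\quad\text{(opposite signs)},
\end{displaymath}
up to the $\pi$-normalization absorbed into $\zeta$.

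For the first (same-sign) case I would argue as follows. Write $m_j=a_j+\ri b_j$ with $b_1,b_2$ of the same sign, say both positive (the both-negative case is symmetric, or follows from $m(\bar z)=\overline{m(z)}$). Then $\Im(m_1m_2)=a_1b_2+a_2b_1$ is not obviously signed, so instead I use the identity coming from the Dyson equation: from $-1/m_j=m_j+z_j$ one gets $z_j=-m_j-1/m_j$, hence $b_j=\Im z_j=-\Im m_j+\Im m_j/|m_j|^2=\Im m_j\,(1-|m_j|^2)/|m_j|^2$, which in particular re-confirms $|m_j|\le 1$ when $b_j,\Im z_j$ share a sign, and more usefully gives $|m_j|^2=1-b_j|m_j|^2/\Im m_j$-type relations I can feed in. The cleaner route: note $1-m_1m_2 = 1-m_1m_2$ and compute $\Im\big(\overline{m_1}(1-m_1m_2)\big)$ or rather bound $|1-m_1m_2|$ from below by $\Im\big((1-m_1 m_2)/(m_1 m_2)\big)\cdot|m_1m_2|$ only when helpful. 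The slickest known trick is: $|1-m_1m_2|\ge |{\Im}(m_1 m_2)| = |a_1 b_2 + b_1 a_2|$ is \emph{not} enough; instead use $|1-m_1m_2|\ge 1-|m_1||m_2|\ge 1-|m_1|$ when $|m_2|$ is bounded away from... — no. The genuinely correct elementary bound is via the real part after rotation: since $b_1b_2>0$, one has $\Re\big(1-m_1m_2\big) + \text{(something)}$... Honestly the robust argument is: $|1-m_1m_2|^2 = (1-a_1a_2+b_1b_2)^2 + (a_1b_2+a_2b_1)^2 \ge (1-a_1a_2+b_1b_2)^2$, and since $a_1a_2 \le |m_1||m_2|\le 1$ while $b_1b_2>0$, we get $1-a_1a_2+b_1b_2 \ge b_1 b_2 \ge \min(b_1,b_2)\cdot\max(b_1,b_2)$; combined with $\max(b_1,b_2)\gtrsim 1$ in the regime $\eta_*\lesssim 1$ (bulk: $\Im m$ is bounded below when $\Re z$ stays away from $\pm 2$, which is Assumption~\ref{as-spectralmeso} Case 2), this yields $|1-m_1m_2|\gtrsim\min(b_1,b_2)=\pi\zeta$ as desired. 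The macroscopic case $\eta_*\gtrsim 1$ is immediate since then $\min(b_1,b_2)\gtrsim 1$ and also $|1-m_1m_2|\gtrsim 1$ trivially.

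For the opposite-sign case one has the extra gain: now $b_1b_2<0$, and the point is that $1-m_1m_2$ cannot be small because $m_1$ and $m_2$ lie in opposite half-planes, so $m_1m_2$ is pushed away from $1$; quantitatively I would use the resolvent/Dyson identity $1-m_1m_2 = \dfrac{(z_1-z_2)\,m_1 m_2}{m_1 - m_2}$ — which follows directly from $-1/m_j = m_j + z_j$, subtracting the two equations — so that $|1-m_1m_2| = \dfrac{|z_1-z_2|\,|m_1||m_2|}{|m_1-m_2|} \ge \dfrac{|\Im z_1 - \Im z_2|\,|m_1||m_2|}{2}\gtrsim \min(|\Im z_1|,|\Im z_2|)=\eta_*$, using $|m_1-m_2|\le 2$ and $|\Im z_1 - \Im z_2| = |\Im z_1| + |\Im z_2|\ge\eta_*$ (opposite signs!) and the fact that $|m_j|\gtrsim 1$ in the bulk. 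The main obstacle is just bookkeeping the case distinctions and the bulk lower bound $|m_j|\sim 1$, $|\Im m_j|\lesssim 1$: these are standard facts about the semicircle Stieltjes transform under Assumption~\ref{as-spectralmeso} (Case 2 keeps $\Re z_j$ in $[-2+\delta,2-\delta]$, so $\Im m_j \asymp \rho_{sc}(\Re z_j)+\sqrt{|\eta_j|}$ stays order one as $\eta_j\to 0$, while $|m_j|\le 1$ always and $|m_j|\ge c>0$ in the bulk), and I would simply cite them from the analysis of the Dyson equation in \cite{CES-optimalLL}. With these in hand both bounds drop out; no genuinely hard step remains.
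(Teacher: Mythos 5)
Your overall approach — use $|m_j|\le 1$ to reduce to a lower bound on $|1-m_1m_2|$, then exploit the explicit formula and the Dyson equation — is the natural one and matches what the paper's one-sentence proof ("immediate from the explicit form of $m$") implicitly does. Two remarks, one on economy and one on a genuine gap.

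\medskip
\emph{Opposite-sign case.} Your derived identity $1-m_1m_2 = (z_1-z_2)m_1m_2/(m_1-m_2)$ is correct, but running it through a lower bound on $|1-m_1m_2|$ and then dividing re-introduces the factors $|m_1||m_2|$ that are about to cancel, forcing you to invoke $|m_j|\gtrsim 1$ (a bulk fact) for no reason. Recall from Section~\ref{sect-prelim} that $q_{1,2}=m[1,2]=(m_1-m_2)/(z_1-z_2)$. Hence directly
\begin{displaymath}
|q_{1,2}|=\frac{|m_1-m_2|}{|z_1-z_2|}\le\frac{2}{|\Im z_1|+|\Im z_2|}\le\frac{1}{\eta_*},
\end{displaymath}
using only $|m_j|\le 1$ and the opposite-sign fact $|z_1-z_2|\ge|\Im z_1|+|\Im z_2|\ge 2\eta_*$. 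No bulk assumption needed, and the identity you derived is not needed either.

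\medskip
\emph{Same-sign case.} Your chain $|1-m_1m_2|\ge\Re(1-m_1m_2)=1-a_1a_2+b_1b_2\ge b_1b_2$ is fine, but the next step — $b_1b_2=\min(b_1,b_2)\max(b_1,b_2)\gtrsim\min(b_1,b_2)$ via $\max(b_1,b_2)\gtrsim1$ — is where you quietly import a bulk restriction that the lemma as stated does not carry. The statement allows arbitrary $z_1,z_2\in\C$, and near the spectral edges both $\Im m_1,\Im m_2$ can tend to zero simultaneously while $|m_j|\to1$, e.g.\ $z_1=z_2=2+\ri\eta$, where $\Im m\sim\sqrt{\eta}$ so $b_1b_2\sim\eta\ll\sqrt{\eta}\sim\min(b_1,b_2)$ and your inequality chain does not close. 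The bound of the lemma is nonetheless true there, but it is then carried by the other term $1-a_1a_2$ in your real-part decomposition ($1-a_1a_2\sim\sqrt{\eta}\gtrsim\min(b_1,b_2)$), which you dropped. So either you should explicitly note that the lemma is only invoked in the paper under Assumption~\ref{as-spectralmeso}, where $\Im m_j\gtrsim1$ makes your argument close; or, for the unconditional statement, keep both terms $1-a_1a_2$ and $b_1b_2$ and split into the case $\max|m_j|\le 1-c$ (where $1-a_1a_2\ge 1-|m_1||m_2|\gtrsim 1\ge\min(b_1,b_2)$) and the case $\max|m_j|>1-c$ (where the explicit edge expansion of $m$ shows $1-a_1a_2$ dominates). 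In short: the idea is right, but as written the proof has an edge-regime gap that the paper's applications happen not to probe.
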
 

The estimates in Lemma~\ref{lem-qbounds} are immediate from the explicit form $m(z)=(-z+\sqrt{z^2-4})/2$ of the solution to~\eqref{eq-mselfcon}. Next, we establish the estimate for $\cE[\cdot]$.

\begin{proof}[Proof of Lemma~\ref{lem-Esize}]
We show~\eqref{eq-Eestimate} by induction. As the base case $k=1$ readily follows from~\eqref{eq-Esmallest}, assume that the bound in~\eqref{eq-Eestimate} holds for up to $k-1$ matrices $T_1,\dots,T_{k-1}$. W.l.o.g. assume further that $A_k$ is either traceless or equal to the identity matrix.

\medskip
We start by considering the case where $A_k=\Id$ and $\Im z_1$ and $\Im z_k$ have opposite signs. Here,~\eqref{eq-Eestimate} follows immediately by using Corollary~\ref{cor-Eproperties}(ii) and applying the induction hypothesis for $\cE[T_2,\dots,T_{k-1},G_kA_1]$ and $\cE[T_1,\dots,T_{k-1}]$, respectively. Note that $|z_1-z_k|\geq 2\eta_*$ by assumption, which completes the bound for the right-hand side of~\eqref{eq-Edivdif1}.

\medskip
In the remaining cases,~\eqref{eq-Eestimate} follows from the recursion~\eqref{eq-Erecursion}, which allows rewriting $\cE[T_1,\dots,T_k]$ in terms of $\m[\cdot]$ and values of $\cE[\cdot]$ for which the induction hypothesis applies. Whenever $\Im z_1$ and $\Im z_k$ have the same sign, Lemma~\ref{lem-qbounds} yields $q_{1,k}\lesssim \pi/\min\{|\Im m_1|,|\Im m_k|\}$. Note that $\Im m(z)$ can be bounded from below independently of $\eta_*$ for both the macroscopic scale and the the bulk regime of the mesoscopic scales (cf. Assumption~\ref{as-spectralmeso}). Thus, estimating the right-hand side of~\eqref{eq-Erecursion} using Lemma~\ref{lem-mbounds} and the induction hypothesis yields the claim. In particular, we obtain $\eta_*$ with the exponent $-(k-1-\lceil\frac{a}{2}\rceil)$ by using the inequality $\lceil\frac{x}{2}\rceil+\lceil\frac{y}{2}\rceil\geq\lceil\frac{x+y}{2}\rceil$ for $x,y\in\N$ to combine the powers of $\eta_*$ when products with $M_{[\cdot]}$ are considered.

\medskip
Whenever $\Im z_1$ and $\Im z_k$ have opposite signs, the prefactor $q_{1,k}$ is of size $\eta_*^{-1}$. However, it only remains to consider the case $\langle A_k\rangle=0$ for this setting, in which the terms involving $q_{1,k}$ on the right-hand side of~\eqref{eq-Erecursion} do not contribute. Hence,~\eqref{eq-Eestimate} again follows from Lemma~\ref{lem-mbounds} and the induction hypothesis. 
\end{proof}

Next, we show the estimates for $\m_{GUE}[\cdot|\cdot]$, $\m_{\kappa}[\cdot|\cdot]$, and $\m[\cdot|\cdot]$. To illustrate the tools at hand, the bound for $\m_{GUE}[\cdot|\cdot]$ is obtained from the explicit formula in~\cite[Thm.~2.4]{JRcompanion} while the bound for $\m_{\kappa}[\cdot|\cdot]$ is proved using the recursion~\eqref{eq-Mrecursion}. We start with a lemma.

\begin{lemma}\label{lem-moremsizes}
Under the assumptions of Lemma~\ref{lem-covapproxmeso}, let $A_1=\dots=A_{k+\ell}=\Id$ and $\kappa_4=0$. Then,
\begin{align}
|m_{GUE}[1,\dots,k|k+1,\dots,k+\ell]|&\lesssim \frac{1}{\eta_*^{k+\ell}}\label{eq-mGUEbound}\\
|m_{\circ\circ}[1,\dots,k|k+1,\dots,k+\ell]|&\lesssim \frac{1}{\eta_*^{k+\ell}}\label{eq-mcirccircbound}
\end{align}
where $m_{\circ\circ}[\cdot|\cdot]$ denotes the second-order free cumulant function associated with the iterated divided differences $m[\cdot]$ and $m_{GUE}[\cdot|\cdot]$. Both bounds are sharp only if not all $\Im z_j$ have the same sign.
\end{lemma}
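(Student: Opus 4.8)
The plan is to prove the two bounds separately: the bound~\eqref{eq-mGUEbound} on $m_{GUE}$ will follow from the divided-difference structure of $m[\cdot|\cdot]$ in the case all $A_j=\Id$, and the bound~\eqref{eq-mcirccircbound} on $m_{\circ\circ}$ will be deduced from it by Möbius inversion of the moment-cumulant relation together with an induction on $k+\ell$.

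For~\eqref{eq-mGUEbound}: by Corollary~\ref{cor-mproperties}(iii) (formula~\eqref{eq-divdif2}, valid once the parameters within each group are distinct), $m_{GUE}[1,\dots,k|k+1,\dots,k+\ell]$ is an iterated divided difference, taken separately in $z_1,\dots,z_k$ and in $z_{k+1},\dots,z_{k+\ell}$, of the two-variable function whose values at single points are $m[s|t]=\frac{m_s'm_t'}{(1-m_sm_t)^2}$. The two ingredients are: first, $|m[s|t]|\lesssim\eta_*^{-2}$, which follows from $|m'|\lesssim1$ on the bulk (via~\eqref{eq-mselfconderived}) and from $|(1-m_sm_t)^{-1}|\lesssim\eta_*^{-1}$, the latter being exactly the explicit computation used in Example~\ref{ex-mesobounds} (equivalently, $|q_{s,t}|\lesssim\eta_*^{-1}$ by Lemma~\ref{lem-qbounds} combined with $|m_sm_t|\sim1$ when the stability factor is small); second, the divided-difference weights $\prod_{i\neq s}\frac{1}{z_i-z_s}$ and $\prod_{j\neq t}\frac{1}{z_j-z_t}$ must be controlled even when some parameters within a group are much closer to each other than $\eta_*$, so that the raw closed formula has spurious large denominators. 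I would handle this exactly as in the proof of Lemma~\ref{lem-Esize} (and of Lemma~\ref{lem-mbounds}): using cyclicity (Corollary~\ref{cor-mproperties}(ii)) to reorder, apply the divided-difference recursion~\eqref{eq-divdif1}/\eqref{eq-divdif3} to any group containing two parameters at mutual distance $\gtrsim\eta_*$ (in particular two on opposite sides of $\R$), each step producing a factor $\frac{1}{z_j-z_i}$ of size $\lesssim\eta_*^{-1}$ and lowering that group's size; a group all of whose parameters lie within an $\eta_*$-ball is instead estimated by Cauchy estimates on a disk of radius of order $\eta_*$ on which the relevant function of the group variable is analytic and bounded by $\lesssim\eta_*^{-2}$. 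Counting the powers of $\eta_*^{-1}$ — one per parameter of each group beyond the first, plus the $\eta_*^{-2}$ from $m[s|t]$ — gives $\eta_*^{-(k-1)}\eta_*^{-(\ell-1)}\eta_*^{-2}=\eta_*^{-(k+\ell)}$ in the worst case, and strictly less when one of the groups has all its $\Im z_j$ of the same sign.

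For~\eqref{eq-mcirccircbound}: I would invert the moment-cumulant relation~\eqref{eq-mcrelation2} (with $\kappa_4=0$, so $m[\cdot|\cdot]=m_{GUE}[\cdot|\cdot]$) on $NCP(k)\times NCP(\ell)$ with a marked block to solve for $m_{\circ\circ}[1,\dots,k|k+1,\dots,k+\ell]$ in terms of: (i) $m_{GUE}[1,\dots,k|k+1,\dots,k+\ell]$ itself (the term with $\pi_1=\{[k]\}$, $\pi_2=\{[k+1,k+\ell]\}$); (ii) the annular sum $\sum_{\pi\in\NCA(k,\ell)}\prod_{B\in\pi}m_\circ[B]$; and (iii) products $m_{\circ\circ}[U_1|U_2]\prod_B m_\circ[B]$ over proper refinements, for which $|U_1|+|U_2|<k+\ell$ and at least one further block is present. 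Term (i) is $\lesssim\eta_*^{-(k+\ell)}$ by the previous paragraph; for (ii) and (iii) one uses that, by Lemma~\ref{lem-mbounds} and the defining Möbius relation $m_\circ[B]=\sum_{\sigma\in NCP(B)}\mu(\sigma)\prod_{V\in\sigma}m[V]$, one has $|m_\circ[B]|\lesssim\eta_*^{-(|B|-1)}$, so a block decomposition of $k+\ell$ points into $r$ blocks contributes $\lesssim\eta_*^{-(k+\ell-r)}$; hence (ii) is $\lesssim\eta_*^{-(k+\ell-1)}$ (at least one block), and (iii), by the induction hypothesis on $k+\ell$ applied to $m_{\circ\circ}[U_1|U_2]$, is $\lesssim\eta_*^{-(|U_1|+|U_2|)}\,\eta_*^{-((k+\ell)-|U_1|-|U_2|-1)}=\eta_*^{-(k+\ell-1)}$ since the refinement is proper. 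Summing, $|m_{\circ\circ}[1,\dots,k|k+1,\dots,k+\ell]|\lesssim\eta_*^{-(k+\ell)}$; the base case $k=\ell=1$ is $m_{\circ\circ}[1|2]=m_{GUE}[1|2]-(m[1,2]-m_1m_2)$ with $|m_{GUE}[1|2]|\lesssim\eta_*^{-2}$ and $|m[1,2]|=|q_{1,2}|\lesssim\eta_*^{-1}$.

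The sharpness claim uses the same dichotomy: when all $\Im z_j$ have the same sign, $q_{i,j}$ and $(1-m_im_j)^{-1}$ are $\cO(1)$ by Lemma~\ref{lem-qbounds} and every divided-difference contour can be taken at distance of order one, so both left-hand sides are in fact $\cO(1)$; two parameters on opposite sides of $\R$ and close to a common bulk energy make $(1-m_im_j)^{-1}$ genuinely of order $\eta_*^{-1}$, saturating the bounds. I expect the main obstacle to be the estimate of the divided-difference weights of $m[\cdot|\cdot]$ in the clustered case — the one place where one cannot argue from the explicit formula and must instead invoke the recursion and Cauchy-estimate argument of Lemma~\ref{lem-mbounds}, now additionally keeping track that the analyticity region of $m[\cdot|\cdot]$ in each group variable is cut out not only by $[-2,2]$ but also by the zeros of the stability factor $1-m(\zeta)m(\omega)$, which sit at distance of order $\eta_*$ from the spectrum.
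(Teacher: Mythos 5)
Your proposal is correct, but for the first bound it takes a genuinely different route from the paper. The paper proves \eqref{eq-mGUEbound} by induction on $k$ and $\ell$ using the recursion~\eqref{eq-Mrecursion}: when $\Im z_1$ and $\Im z_k$ have the same sign, $q_{1,k}=\cO(1)$ by Lemma~\ref{lem-qbounds} and the recursion reduces everything to shorter chains via Lemma~\ref{lem-mbounds}; when they have opposite signs, the divided-difference identity of Corollary~\ref{cor-mproperties}(iii) is used together with $|z_1-z_k|\geq2\eta_*$. You instead start from the closed formula~\eqref{eq-divdif2}, view $m_{GUE}[\cdot|\cdot]$ as an iterated divided difference of the base function $m[s|t]=m_s'm_t'(1-m_sm_t)^{-2}$, and control the weights by the separated-vs-clustered dichotomy with Cauchy estimates on disks of radius $\sim\eta_*$. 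This is sound (it is the same machinery behind Lemma~\ref{lem-mbounds}), and it buys you a self-contained argument that never touches the recursion; the price is that you must confront the clustering of spectral parameters explicitly, which the paper's induction sidesteps because in the same-sign (hence possibly clustered) case it uses the recursion rather than the divided-difference formula. Your power counting and your resolution of the one delicate point --- analyticity and boundedness of $\zeta\mapsto m[\zeta|t]$ on small disks, noting that $1-m(\zeta)m(\omega)$ never vanishes off the real axis but can be of size $\eta_*$ --- are both correct. For \eqref{eq-mcirccircbound} the paper simply calls the bound an immediate consequence of the moment-cumulant relation~\eqref{eq-mcrelation2} together with \eqref{eq-mGUEbound} and Lemma~\ref{lem-mbounds}; your explicit M\"obius inversion with induction on $|U_1|+|U_2|$ and the estimate $|m_\circ[B]|\lesssim\eta_*^{-(|B|-1)}$ is exactly what that remark amounts to, just spelled out. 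Your sharpness discussion is also consistent with (indeed slightly stronger than) the paper's claim.
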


\begin{proof}[Proof of Lemma~\ref{lem-moremsizes}]
The bound~\eqref{eq-mGUEbound} follows by induction on $k$ and $\ell$. As the base case $k=\ell=1$ is covered by Example~\ref{ex-mesobounds}, assume that the bound for $m_{GUE}[\cdot|\cdot]$ holds for up to $k-1$ indices in the first argument and a fixed number $\ell$ indices in the second argument. Recall that $m_{GUE}[\cdot|\cdot]$ is symmetric under the interchanging of its arguments by Definition~\ref{def-M}(i) such that it is sufficient to carry out the induction step for one of the arguments only. We distinguish two cases for $z_k$ depending on the sign of its imaginary part.

\medskip
\underline{\smash{Case 1: $\Im z_1$ and $\Im z_k$ have the same sign:}} We note that $|q_{1,k}|\leq \pi/\min\{|\Im m_1|,|\Im m_k|\}$ by Lemma~\ref{lem-qbounds}, where the right-hand side can be bounded from above independently of $\eta_*$ under Assumption~\ref{as-spectralmeso}. Rewriting $m_{GUE}[1,\dots,k|k+1,\dots,k+\ell]$ using the recursion~\eqref{eq-Mrecursion}, the bound~\eqref{eq-mGUEbound} follows directly from the induction hypothesis and the estimate for $m[\cdot]$ from Lemma~\ref{lem-mbounds}.

\medskip
\underline{\smash{Case 2: $\Im z_1$ and $\Im z_k$ have opposite signs:}} Recalling that $m_{GUE}[\cdot|\cdot]$ has a divided difference structure (cf. Corollary~\ref{cor-mproperties}), it follows from the induction hypothesis that
\begin{align*}
&|m_{GUE}[1,\dots,k|k+1,\dots,k+\ell]|\\
&=\Big|\frac{m_{GUE}[2,\dots,k|k+1,\dots,k+\ell]-m_{GUE}[1,\dots,k-1|k+1,\dots,k+\ell]}{z_1-z_k}\Big|\\
&\lesssim\frac{1}{\eta_*^{k-1+\ell}|z_1-z_k|}.
\end{align*}
As $\Im z_1$ and $\Im z_k$ are assumed to have opposite signs, we have $|z_1-z_k|\geq2\eta_*$, which gives~\eqref{eq-mGUEbound}. This concludes the induction step.

\medskip
The bound~\eqref{eq-mcirccircbound} for $m_{\circ\circ}[\cdot|\cdot]$ is an immediate consequence of the second-order moment-cumulant relation~\eqref{eq-mcrelation2} as well as the estimates in~\eqref{eq-mGUEbound} and Lemma~\ref{lem-mbounds}.
\end{proof}

\begin{proof}[Proof of Lemma~\ref{lem-sizems}]
Given Lemma~\ref{lem-moremsizes}, the bound~\eqref{eq-checkerboard} readily follows from~\cite[Thm.~2.4]{JRcompanion}. We omit the details and only remark that a leading contribution is obtained for an annular non-crossing permutation $\pi$ with $|K(\pi)|\leq k+\ell-\lceil (a+b)/2\rceil$ or a marked partition $\pi_1\times\pi_2$ satisfying $|K(\pi_1)|\leq k-\lceil a/2\rceil$ and $|K(\pi_2)|\leq l-\lceil b/2\rceil$, respectively. This implies
\begin{displaymath}
\Big|\sum_{\pi\in \NCA(k,\ell)}\Big(\prod_{B\in K(\pi)}\Big\langle \prod_{j\in B}A_j\Big\rangle\Big)\prod_{B\in\pi}m_{\circ}[B]\Big|\lesssim\Big(\frac{1}{\eta_*}\Big)^{k+\ell-\lceil (a+b)/2\rceil}
\end{displaymath}
as well as
\begin{align*}
&\Big|\sum_{\substack{\pi_1\times\pi_2\in NCP(k)\times NCP(\ell),\\ U_1\in\pi_1,U_2\in\pi_2\text{ marked}}}\Big(\prod_{\substack{B\in K(\pi_1)\\ \cup K(\pi_2)}}\Big\langle \prod_{j\in B} A_j\Big\rangle\Big)m_{\circ\circ}[U_1|U_2]\prod_{\substack{B_1\in \pi_1\setminus U_1,\\ B_2\in\pi_2\setminus U_2}}m_{\circ}[B_1]m_{\circ}[B_2]\Big|\\
&\quad\lesssim\Big(\frac{1}{\eta_*}\Big)^{k+\ell-\lceil a/2\rceil-\lceil b/2\rceil}.
\end{align*}
In particular, the two sums yielding $\m_{GUE}[\cdot|\cdot]$ only contribute equally to~\eqref{eq-checkerboard} if $\lceil\frac{a}{2}\rceil+\lceil\frac{b}{2}\rceil=\lceil\frac{a+b}{2}\rceil$.

\medskip
The bound~\eqref{eq-mkappabound} follows by induction on $k$ and $\ell$. As the base case $k=\ell=1$ is again covered by Example~\ref{ex-mesobounds}, assume that the bound for $\m_{\kappa}[\cdot|\cdot]$ holds for a multi-index of length at most $k-1$ in the first argument and a multi-index of length $\ell$ in the second argument. Recalling that $\m_{\kappa}[\cdot|\cdot]$ is symmetric under the interchanging of its arguments, it is sufficient to carry out the induction step for one of the arguments only. To simplify notation, set $\beta=\{(z_{k+1},A_{k+1}),\dots,(z_{k+\ell},A_{k+\ell})\}$. We further assume w.l.o.g. that each $A_j$ is either traceless or equal to the identity matrix. Similar to the proof of Lemma~\ref{lem-Esize}, we distinguish two cases depending on the deterministic matrices $A_1,\dots,A_k$.

\medskip
\underline{\smash{Case 1 ($\exists j$ such that $A_j=\Id$):}} We start by noting that $\m_{GUE}[\cdot|\cdot]$ satisfies (i)-(iv) of Corollary~\ref{cor-mproperties}. This implies that the same holds for $\m_{\kappa}[\cdot|\cdot]=\m[\cdot|\cdot]-\m_{GUE}[\cdot|\cdot]$. Using the divided difference structure, we rewrite $\m_{\kappa}[T_1,\dots,T_k|T_{k+1},\dots,T_{k+\ell}]$ either as a contour integral
\begin{align}
&\m_{\kappa}[T_1,\dots,T_{j-1},G_j,T_{j+1},\dots,T_k|\beta]\label{eq-mkapparewritten1}\\
&=\frac{1}{2\pi\ri}\int_{\R}\frac{\m_{\kappa}[\dots,T_{j-1},G(x+\ri\eta)A_{j+1},\dots|\beta]-\m_{\kappa}[\dots,T_{j-1},G(x-\ri\eta)A_{j+1},\dots|\beta]}{(x+\ri s\eta-z_j)(x+\ri s\eta-z_{j+1})}\dx x\NN
\end{align}
if $\Im z_j$ and $\Im z_{j+1}$ have the same sign, or as
\begin{align}
&\m_{\kappa}[T_1,\dots,T_{j-1},G_j,T_{j+1},\dots,T_k|\beta]\label{eq-mkapparewritten2}\\
&=\frac{\m_{\kappa}[T_1,\dots,T_{j-1},T_{j+1},\dots,T_k|\beta]-\m_{\kappa}[T_1,\dots,T_{j-1},T_jA_{j+1},T_{j+2},\dots,T_k|\beta]}{z_j-z_{j+1}}\NN
\end{align}
if $\Im z_j$ and $\Im z_{j+1}$ have opposite signs. Estimating~\eqref{eq-mkapparewritten1} and~\eqref{eq-mkapparewritten2} using the induction hypothesis yields~\eqref{eq-mkappabound}.

\medskip
\underline{\smash{Case 2 (all $A_1,\dots,A_k$ traceless):}} As $\m_{\kappa}[\cdot|\cdot]$ solves the recursion~\eqref{eq-Mrecursion} without the source term $\mathfrak{s}_{GUE}$ we can rewrite $\m_{\kappa}[T_1,\dots,T_k|T_{k+1},\dots,T_{k+\ell}$ in terms of $\m[\cdot]$ and values of $\m_{\kappa}[\cdot|\cdot]$ for which the induction hypothesis applies. Note that $\langle A_k\rangle=0$ implies that all terms with prefactor $q_{1,k}$ on the right-hand side of~\eqref{eq-Mrecursion} vanish. The desired estimate thus readily follows from the induction hypothesis and Lemma~\ref{lem-mbounds}. For the source term $\mathfrak{s}_{\kappa}$ in~\eqref{eq-sourcekappa}, we obtain, e.g.,
\begin{align*}
&\big|\langle M_{[r]}\odot M_{(s,\dots,k+\ell,k+1,\dots,t)}\rangle\langle M_{[r,k]}\odot M_{[t,s]}\rangle\big|\\
&\leq\frac{1}{N^2}\sum_{x,y\in[N]}\big|(M_{[r]})_{xx}(M_{(s,\dots,k+\ell,k+1,\dots,t)})_{xx}(M_{[r,k]})_{yy}(M_{[t,s]})_{yy}\big|\\
&\leq\Big(\frac{1}{\eta_*}\Big)^{k+\ell-\lceil(a+b)/2\rceil}
\end{align*}
for any $1\leq r\leq k$ and $k+1\leq s\leq t\leq k+\ell$. Recall that $\lceil\frac{x}{2}\rceil+\lceil\frac{y}{2}\rceil\geq\lceil\frac{x+y}{2}\rceil$ for any $x,y\in\N$, which yields the desired exponent $-(k+\ell-\lceil \frac{a+b}{2}\rceil)$ for $\eta_*$ when bounds are multiplied. This concludes the proof of~\eqref{eq-mkappabound}.

\medskip
The bound~\eqref{eq-msize} for $\m[\cdot|\cdot]$ is immediate form the decomposition~\eqref{eq-Mdecomp} using the estimates~\eqref{eq-checkerboard} and~\eqref{eq-mkappabound}.
\end{proof}

\subsection{Proof of Lemma~\ref{lem-mEexchangemeso} (Expansion for $\E\langle T_{[1,k]}\rangle$)}\label{sect-mEexchangemeso}
We use proof by induction to establish~\eqref{eq-mEexchangemeso}. As the base case $\E\langle T_{\emptyset}\rangle$ is trivial, assume that the expansion in Lemma~\ref{lem-mEexchangemeso} holds for resolvent chains of length up to $k-1$. We further assume w.l.o.g. that each $A_j$ is either traceless or equal to the identity matrix.

\medskip
First, consider resolvent chains that contain at least one deterministic matrix $A_j=\Id$, i.e., that are of the form $T_{[1,k]}=T_{[1,j\rangle}G_jG_{j+1}T_{\langle j+1,k]}$ with the indices $j,j+1$ being interpreted mod $k$ due to the cyclicity of the trace and the function $\cE[\cdot]$. In this case, rewriting the product $G_jG_{j+1}$ allows us to obtain the claim directly from the induction hypothesis. We distinguish two cases depending on the imaginary parts of $z_j$ and $z_{j+1}$.

\medskip
\underline{\smash{Case 1 ($\Im z_j$ and $\Im z_{j+1}$ have the same sign):}} Let $s:=\mathrm{sign}(\Im z_j)=\mathrm{sign}(\Im z_{j+1})$. By the residue theorem, we can write the product $G_jG_{j+1}$ as a contour integral (cf.~\cite[Lem.~3.2]{CES-optimalLL})
\begin{equation}\label{eq-residueGs}
G_jG_{j+1}=\frac{1}{\pi}\int_{\R}\frac{\Im G(x+\ri\eta)}{(x+\ri s\eta-z_j)(x+\ri s\eta-z_{j+1})}\dx x,\quad G_j=G(z_j),
\end{equation}
whenever $0<\eta<\Im z_j,\Im z_{j+1}$ ($s=1$) or $\Im z_j,\Im z_{j+1}<-\eta<0$ ($s=-1$). Note that both $\m[\cdot]$ and $\cE[\cdot]$ have a similar representation, as
\begin{align}
&\m[T_1,\dots,T_{j-1},G_j,T_{j+1},\dots,T_k]\NN\\
&=\frac{1}{2\pi\ri}\int_{\R}\frac{\m[\dots,T_{j-1},G(x+\ri\eta)A_{j+1},\dots]-\m[\dots,T_{j-1},G(x-\ri\eta)A_{j+1},\dots]}{(x+\ri s\eta-z_j)(x+\ri s\eta-z_{j+1})}\dx x\label{eq-residueM}
\end{align}
by the residue theorem and~\cite[Lem.~4.4]{CES-thermalization} as well as
\begin{align}
&\cE[T_1,\dots,T_{j-1},G_j,T_{j+1},\dots,T_k]\NN\\
&=\frac{1}{2\pi\ri}\int_{\R}\frac{\cE[\dots,T_{j-1},G(x+\ri\eta)A_{j+1},\dots]-\cE[\dots,T_{j-1},G(x-\ri\eta)A_{j+1},\dots]}{(x+\ri s\eta-z_j)(x+\ri s\eta-z_{j+1})}\dx x\label{eq-residueE}
\end{align}
using Corollary~\ref{cor-Eproperties}(ii). Recall that the properties listed in Corollary~\ref{cor-Eproperties} can be derived directly from the recursion~\eqref{eq-Erecursion}, i.e., their proof is independent of Lemma~\ref{lem-mEexchangemeso} and the "meta argument" in Appendix~\ref{app-meta}. Rewriting the left-hand side of~\eqref{eq-mEexchangemeso} using~\eqref{eq-residueGs} and~\ref{eq-residueM}, we obtain an integral involving a resolvent chain of length $k-1$. Hence, by the induction hypothesis,
\begin{align*}
&\E(\langle T_{[1,k]}\rangle-\m[T_1,\dots,T_k])\\
&=\frac{\kappa_4}{2\pi\ri N}\int_{\R}\frac{\cE[\dots,T_{j-1},G(x+\ri\eta)A_{j+1},\dots]-\cE[\dots,T_{j-1},G(x-\ri\eta)A_{j+1},\dots]}{(x+\ri s\eta-z_j)(x+\ri s\eta-z_{j+1})}\dx x\\
&\quad+\cO\Big(\frac{N^\eps}{N\, \sqrt{N\eta_*}\ \eta_*^{k-1-a/2}}\Big).
\end{align*}
Evaluating the integral using~\eqref{eq-residueE} gives~\eqref{eq-mEexchangemeso} as desired.

\medskip
\underline{\smash{Case 2 ($\Im z_j$ and $\Im z_{j+1}$ have opposite signs):}} Applying the resolvent identity~\eqref{eq-resolventid} and the divided difference structure of $\m[\cdot]$ (cf.~\cite[Lem.~5.4]{CES-thermalization}) yields
\begin{align*}
&\E(\langle T_{[1,k]}\rangle-\m[T_1,\dots,T_k])\\
&=\E\Big(\frac{\langle T_{[1,j\rangle}G_jA_{j+1}T_{\langle j+1,k]}\rangle-\langle T_{[1,j\rangle}T_{[j+1,k]}\rangle}{z_j-z_{j+1}}\\
&\quad-\frac{\m[T_1,\dots,T_{j-1},G_jA_{j+1},T_{j+2},\dots,T_k]-\m[T_1,\dots,T_{j-1},T_{j+1},T_{j+2},\dots,T_k]}{z_j-z_{j+1}}\Big)\\
&=\frac{\kappa_4}{N}\frac{\cE[T_1,\dots,T_{j-1},G_jA_{j+1},T_{j+2},\dots,T_k]-\cE[T_1,\dots,T_{j-1},T_{j+1},T_{j+2},\dots,T_k]}{z_j-z_{j+1}}\\
&\quad+\cO\Big(\frac{N^\eps}{N\, \sqrt{N\eta_*}\ \eta_*^{k-1-a/2}\ |z_j-z_{j+1}|}\Big)
\end{align*}
by the induction hypothesis. As $\Im z_j$ and $\Im z_{j+1}$ are assumed to have opposite signs, it follows that $|z_j-z_{j+1}|\geq 2\eta_*$. The claim is now immediate from~\eqref{eq-Edivdif1}.

\medskip
It remains to consider $T_{[1,k]}$ for which all matrices $A_1,\dots,A_k$ are traceless. Here, we start the induction step by introducing
\begin{equation}\label{eq-underline}
\underline{Wf(W)}:= Wf(W)-\widetilde{\E}\widetilde{W}(\partial_{\widetilde{W}}f)(W)
\end{equation}
with $\partial_{\widetilde{W}}$ denoting the directional derivative in direction $\smash{\widetilde{W}}$ and $\smash{\widetilde{W}}$ denoting an independent GUE matrix with expectation $\smash{\widetilde{\E}}$. By construction, the renormalization in~\eqref{eq-underline} cancels out the second-order term in the cumulant expansion of $\E Wf(W)$. In particular, $\smash{\E \underline{Wf(W)}=0}$ whenever $W$ itself is a GUE matrix. Applying~\eqref{eq-underline} for the resolvent $f(W)=(W-z)^{-1}$ yields the formulas
\begin{align}
\underline{WG_1}&=WG_1+\langle G_1\rangle G_1,\NN\\
\underline{WT_1\dots T_k}&=\underline{WG_1}A_1T_{[2,k]}+\sum_{j=2}^k\langle T_{[1,j\rangle}G_j\rangle T_{[j,k]},\label{eq-underlinechain}
\end{align}
and we further recall the identities
\begin{align}
\langle G_1-m_1\rangle&=\frac{1}{1-m_1^2}(-m_1\langle\underline{WG_1}\rangle+m_1\langle G_1-m_1\rangle^2)\label{eq-Gidentity1}\\
\langle T_1\rangle-\m[T_1]&=-m_1\langle\underline{WT_1}\rangle+m_1^2\langle G_1-m_1\rangle\langle A_1\rangle+m_1\langle G_1-m_1\rangle\langle T_1-\m[T_1]\rangle\label{eq-Gidentity2}
\end{align}
from (96) in~\cite{CES-functCLT}. To complete the induction step, we need the analog of~\eqref{eq-Gidentity2} for general $k\geq1$. This allows rewriting $N(\langle T_{[1,k]}\rangle-\m[T_1,\dots,T_k])$ in terms of shorter chains, and the claim follows by showing that the expectation matches the right-hand side of~\eqref{eq-Erecursion} up to an $\smash{\cO(N^\eps/(\sqrt{N\eta_*}\eta_*^{k-a/2}))}$ error.

\medskip
A brief calculation (see the proof of the local law~\cite[Thm.~3.4]{CES-thermalization} or~\cite[Lem.~4.1]{CES-optimalLL}) yields 
\begin{align}
\langle T_{[1,k]}\rangle&=m_1\Big(-\langle\underline{WT_{[1,k]}}\rangle+\langle T_{[2,k]}A_1\rangle+\sum_{j=2}^{k-1}\langle T_{[1,j\rangle}G_j\rangle\langle T_{[j,k]}\rangle+\langle G_1-m_1\rangle\langle T_{[1,k]}\rangle\NN\\
&\quad+\langle T_{[1,k\rangle}G_k\rangle\langle(G_k-m_k)A_k\rangle\Big).\label{eq-m1identity}
\end{align}
Next, we rewrite the equation to the form
\begin{align}
&\Big(1+\cO_{\prec}\Big(\frac{1}{N\eta_*}\Big)\Big)(\langle T_{[1,k]}\rangle-\m[T_1,\dots,T_k])\NN\\
&=m_1\Big(-\langle \underline{WT_{[1,k]}}\rangle+(\langle T_{[2,k\rangle}G_kA_kA_1\rangle-\m[T_2,\dots,T_{k-1},G_kA_kA_1])\NN\\
&\quad+\sum_{j=1}^{k-1}(\langle T_{[1,j\rangle}G_j\rangle-\m[T_1,\dots,T_{j-1},G_j])\m[T_j,\dots,T_k]\label{eq-mesodifference1}\\
&\quad+\sum_{j=2}^k\m[T_1,\dots,T_{j-1},G_j](\langle T_{[j,k]}\rangle-\m[T_j,\dots,T_k])\NN\\
&\quad+\sum_{j=2}^{k}(\langle T_{[1,j\rangle}G_j\rangle-\m[T_1,\dots,T_{j-1},G_j])(\langle T_{[j,k]}\rangle-\m[T_j,\dots,T_k])\Big),\NN
\end{align}
where we applied~\eqref{eq-multiGaveraged} for $\langle G_1-m_1\rangle$ on the right-hand side. Recall that $\langle A_k\rangle=0$ and $a=k$ in the case considered. Moving the factor $(1+\cO_{\prec}((N\eta_*)^{-1})$ to the right-hand side, multiplying~\eqref{eq-mesodifference1} with $N$ and taking the expectation yields
\begin{align*}
N\E(\langle T_{[1,k]}\rangle-\m[T_1,\dots,T_k])&=\Big(m_1+\cO_{\prec}\Big(\frac{1}{N\eta_*}\Big)\Big)\Big(-N\E\langle \underline{WT_{[1,k]}}\rangle+\kappa_4\cE[T_1,\dots,T_k]\\
&\quad-\kappa_4\sum_{1\leq r\leq s\leq t\leq k}\langle M_{[r]}\odot M_{[s,t]}\rangle\langle M_{[r,s]}\odot(M_{[t,k]}A_k)\rangle\Big)\\
&\quad+\cO\Big(\frac{N^\eps}{\sqrt{N\eta_*}\ \eta_*^{k/2}}\Big)
\end{align*}
by the induction hypothesis,~\eqref{eq-Erecursion} and the expansion $\frac{1}{1+x}=1+\cO(x)$. We further applied ~\eqref{eq-multiGaveraged} for the last line of~\eqref{eq-mesodifference1} to obtain
\begin{displaymath}
(\langle T_{[1,j\rangle}G_j\rangle-\m[T_1,\dots,T_{j-1},G_j])(\langle T_{[j,k]}\rangle-\m[T_j,\dots,T_k])=\cO_\prec\Big(\frac{1}{N^2\eta_*^{k/2+1}}\Big).
\end{displaymath}
It follows that
\begin{displaymath}
N\E(\langle T_{[1,j\rangle}G_j\rangle-\m[T_1,\dots,T_{j-1},G_j])(\langle T_{[j,k]}\rangle-\m[T_j,\dots,T_k])=\cO\Big(\frac{N^\eps}{(N\eta_*)\ \eta_*^{k/2}}\Big),
\end{displaymath}
i.e., the term is indeed part of the error. Hence,~\eqref{eq-mEexchangemeso} is established if
\begin{equation}\label{eq-targeterror}
N\E\langle \underline{WT_{[1,k]}}\rangle=-\kappa_4\sum_{1\leq r\leq s\leq t\leq k}\langle M_{[r]}\odot M_{[s,t]}\rangle\langle M_{[r,s]}\odot(M_{[t,k]}A_k)\rangle+\cO\Big(\frac{N^\eps}{\sqrt{N\eta_*}\ \eta_*^{k/2}}\Big).
\end{equation}
By cumulant expansion, the underlined term on the left-hand side of~\eqref{eq-targeterror} is given by
\begin{equation}\label{eq-cumuexpmeso}
N\E\langle \underline{WT_{[1,k]}}\rangle=\sum_{n\geq2}\sum_{x,y\in[N]}\sum_{\nu\in\{xy,yx\}^n}\frac{\kappa(xy,\nu)}{n!}\E \partial_{\nu}(T_{[1,k]})_{yx},
\end{equation}
where $\partial_{xy}$ denotes the directional derivative in the direction of the $xy$ entry of $W$ and $\kappa(xy,\nu)$ denotes the joint cumulant of $W_{xy},W_{\nu_1},\dots,W_{\nu_n}$ for any $n$-tuple of double indices $\nu=(\nu_1,\dots,\nu_n)$. Note that the $n=1$ term of the expansion~\eqref{eq-cumuexpmeso} is canceled out by the renormalization~\eqref{eq-underline}.

\medskip
Recall that $\kappa(xy,\nu)\sim N^{-(|\nu|+1)/2}$ by the scaling of $W$. It is hence sufficient to estimate the terms for $n\geq4$ trivially using the bounds from Lemma~\ref{lem-mbounds}, as the factor $N^2$ obtained from the double summation is canceled by the bound for the cumulant. Note that since
\begin{displaymath}
\partial_{xy}(T_{[1,k]})_{vw}=\sum_{r=1}^{k}(T_{[1,r\rangle}G_r)_{vx}(T_{[r,k]})_{yw},
\end{displaymath}
every derivative yields an additional resolvent factor, which increases the size of the bound for the corresponding resolvent chain by $\eta_*^{-1}$. However, each derivative also "breaks" the chain it acts on and increases the total number of resolvent chain entries in the term by one. This compensates for the additional factor $\eta_*^{-1}$ (cf. Lemma~\ref{lem-mbounds}) such that the power of $\eta_*^{-1}$ contained in the bound for $\partial_{\nu}(T_{[1,k]})_{yx}$ does not depend on the order of the derivative. We conclude that
\begin{displaymath}
\sum_{n\geq4}\sum_{x,y}\sum_{\nu\in\{xy,yx\}^n}\frac{\kappa(xy,\nu)}{n!}\E \partial_{\nu}(T_{[1,k]})_{yx}=\cO\Big(\frac{N^\eps}{\sqrt{N\eta_*}\ \eta_*^{k/2}}\Big)
\end{displaymath}
and identify the term as part of the error in~\eqref{eq-mEexchangemeso}. It remains to consider the $n=2$ and $n=3$ contribution to~\eqref{eq-cumuexpmeso}. Recall that we write $T_{[i,j]}=T_i\dots T_j$ for $i\leq j$ and $T_{[i,i\rangle}=T_{\emptyset}=0$.

\medskip
\underline{\smash{Estimate for the $n=2$ term of~\eqref{eq-cumuexpmeso}}:} Evaluating the derivative yields, e.g.,
\begin{align*}
(\partial_{xy})^2(T_{[1,k]})_{yx}&=-\partial_{xy}\sum_{r=1}^k(T_{[1,r\rangle}G_r)_{yx}(T_{[r,k]})_{yx}\\
&=\sum_{r=1}^k\Big(\sum_{s=1}^r(T_{[1,s\rangle}G_s)_{yx}(T_{[s,r\rangle}G_r)_{yx}(T_{[r,k\rangle})_{yx}+\sum_{s=r}^k(T_{[1,r\rangle}G_r)_{yx}(T_{[r,s\rangle}G_s)_{yx}(T_{[s,k]})_{yx}\Big),
\end{align*}
which only involves $xy$ or $yx$ entries of a resolvent chain. Note that the contribution for the case $x=y$ consists of just one sum and is, therefore, of lower order by power counting. We thus refer to any $xy$ or $yx$ entries as off-diagonal below. Further, we obtain
\begin{align*}
\partial_{xy}\partial_{yx}(T_{[1,k]})_{yx}&=\sum_{r=1}^k\Big(\sum_{s=1}^r(T_{[1,s\rangle}G_s)_{yx}(T_{[s,r\rangle}G_r)_{yy}(T_{[r,k]})_{xx}\\
&\quad\quad+\sum_{s=r}^k(T_{[1,r\rangle}G_r)_{yx}(T_{[r,s\rangle}G_s)_{yy}(T_{[s,k]})_{xx}\Big),
\end{align*}
which involves two diagonal and one off-diagonal entries. The other terms arising for $n=2$ are of a similar structure, i.e., they involve either zero or two diagonal entries.

\medskip
As every term involves at least one off-diagonal entry of a resolvent chain, we use a procedure called~\emph{isotropic resummation} to estimate the $x$ and $y$ summations. To illustrate the strategy, consider the sum
\begin{equation}\label{eq-exestimate}
N^{-3/2}\sum_{x,y\in[N]}(T_{[1,r\rangle}G_r)_{yx}(T_{[r,s\rangle}G_s)_{yy}(T_{[s,k]})_{xx}
\end{equation}
with fixed $1\leq r\leq s\leq k$. The factor $N^{-3/2}$ in front of the term accounts for the size of the cumulant $\kappa(xy,xy,yx)$. First, insert the deterministic approximation of the diagonal terms to decompose the resolvent chain entries in~\eqref{eq-exestimate} into a deterministic term satisfying the bounds in Lemma~\ref{lem-mbounds} and a fluctuation that is controlled by the local law~\eqref{eq-multiGisotropic}. Recalling that
\begin{equation}\label{eq-recalled1}
\|M_{[i,j]}A_j\|\lesssim\|M_{[i,j]}\|\lesssim\frac{1}{\eta_*^{(j-i)/2}}
\end{equation}
for $i\leq j$ by Lemma~\ref{lem-mbounds}, as $M_{[i,j]}$ involves $j-i$ traceless matrices and $\|A_j\|\lesssim1$ by assumption, and that
\begin{equation}\label{eq-recalled2}
\max_{x,y\in[N]}|(T_{[i,j\rangle}G_j-M_{[i,j]})A_j)_{xy}|=\cO_{\prec}\Big(\frac{1}{\sqrt{N\eta_*}\ \eta_*^{(j-i)/2}}\Big)
\end{equation}
by the isotropic local law~\eqref{eq-multiGisotropic}, we obtain the bound
\begin{displaymath}
|(T_{[1,r\rangle}G_r)_{yx}|\leq \|M_{[r]}\|+\max_{x,y\in[N]}|(T_{[1,r\rangle}G_r-M_{[r]})_{yx}|=\cO_\prec \Big(\frac{1}{\eta_*^{(r-1)/2}}+\frac{1}{\sqrt{N\eta_*}\ \eta_*^{(r-1)/2}}\Big).
\end{displaymath}
Recall that we abbreviated $M_{[r]}=M_{[1,r]}$. Putting everything together, it follows that
\begin{displaymath}
N^{-3/2}\sum_{x,y\in[N]}(T_{[1,r\rangle}G_r)_{yx}(T_{[r,s\rangle}G_s-M_{[r,s]})_{yy}(T_{[s,k]}-M_{[s,k]}A_k)_{xx}=\cO_{\prec}\Big(\frac{1}{\sqrt{N\eta_*}\ \eta_*^{k/2}}\Big)
\end{displaymath}
and we can include this term in the error in~\eqref{eq-mEexchangemeso}. Recall that we consider $N(\E\langle T_{[1,k]}-\m[T_1,\dots,T_k])$, i.e., the error terms obtained differ from~\eqref{eq-mEexchangemeso} by a factor of $N$.

\medskip
Next, let $\dvec{A}=(A_{jj})_{j=1}^N$ denote the (column) vector consisting of the diagonal entries of a matrix $A\in\C^{N\times N}$. In this notation, we have
\begin{equation*}
N^{-3/2}\sum_{x,y\in[N]}(T_{[1,r\rangle}G_r)_{yx}(M_{[r,s]})_{yy}(M_{[s,k]}A_k)_{xx}=N^{-3/2}\langle\dvec{M_{[r,s]}},T_{[1,r\rangle}G_r\dvec{M_{[s,k]}A_k}\rangle
\end{equation*}
with deterministic vectors $\dvec{M_{[r,s]}}$ and $\dvec{M_{[s,k]}A_k}$ satisfying
\begin{align*}
\|\dvec{M_{[r,s]}}\|&\leq\sqrt{N}\|M_{[r,s]}\|=\cO\Big(\frac{\sqrt{N}}{\eta_*^{(s-r)/2}}\Big),\\
\|\dvec{M_{[s,k]}A_k}\|&\leq\sqrt{N}\|M_{[s,k]}A_k\|=\cO\Big(\frac{\sqrt{N}}{\eta_*^{(k-s)/2}}\Big),
\end{align*} 
by~\eqref{eq-recalled1}. Recalling that $T_{[1,r\rangle}G_r$ contains $r-1$ traceless matrices by assumptions, we obtain
\begin{displaymath}
N^{-3/2}\sum_{x,y\in[N]}(T_{[1,r\rangle}G_r)_{yx}(M_{[r,s]})_{yy}(M_{[s,k]}A_k)_{xx}=\cO_\prec\Big(\frac{1}{\sqrt{N\eta_*}\ \eta_*^{k/2-1}}\Big)
\end{displaymath}
from~\eqref{eq-recalled1} and~\eqref{eq-multiGisotropic}. Hence, the term can be included in the error in~\eqref{eq-mEexchangemeso}.

\medskip
It remains to estimate the two terms in~\eqref{eq-exestimate} that involve one deterministic and one fluctuation term each. By the Cauchy-Schwarz inequality, we obtain
\begin{align*}
&N^{-3/2}\sum_{x,y\in[N]}(T_{[1,r\rangle}G_r)_{yx}(T_{[r,s\rangle}G_s-M_{[r,s]})_{yy}(M_{[s,k]}A_k)_{xx}\\
&\leq N^{-3/2}\|T_{[1,r\rangle}G_r\|\cdot\|\dvec{T_{[r,s\rangle}G_s-M_{[r,s]}}\|\cdot\|\dvec{M_{[s,k]}A_k}\|
\end{align*}
with a similar bound holding for the term involving $M_{[r,s]}$ and $(T_{[s,k]}-M_{[s,k]}A_k)$. Note that
\begin{align*}
\|\dvec{T_{[r,s\rangle}G_s-M_{[r,s]}}\|&\leq \sqrt{N}\max_{x\in[N]}|(T_{[r,s\rangle}G_s-M_{[r,s]})_{xx}|=\cO_\prec\Big(\frac{1}{\eta_*^{(s-r+1)/2}}\Big),\\
\|T_{[1,r\rangle}G_r\|&\leq \|M_{[r]}\|+N\max_{x,y\in[N]}|(T_{[1,r\rangle}G_r-M_{[r]})_{xy}|=\cO_\prec \Big(\frac{1}{\eta_*^{(r-1)/2}}+\frac{\sqrt{N}}{\eta_*^{r/2}}\Big)
\end{align*}
by~\eqref{eq-recalled1},~\eqref{eq-recalled2}, and the fact that $\|B\|\leq N\max_{x,y}|B_{xy}|$ for any matrix $B\in\C^{N\times N}$. Overall, we obtain
\begin{displaymath}
N^{-3/2}\sum_{x,y\in[N]}(T_{[1,s\rangle}G_s)_{yx}(T_{[s,r\rangle}G_r)_{yy}(T_{[r,k]})_{xx}=\cO_{\prec}\Big(\frac{1}{\sqrt{N\eta_*}\ \eta_*^{k/2}}\Big),
\end{displaymath}
i.e., the term is part of the error in~\eqref{eq-mEexchangemeso}. The other terms arising from $\partial_{\nu}(T_1\dots T_k)_{yx}$ with $\nu\in\{xy,yx\}^2$ are treated similarly. Hence, the entire $n=2$ contribution of~\eqref{eq-cumuexpmeso} can be included in the error term in~\eqref{eq-mEexchangemeso}.

\medskip
\underline{\smash{Computation of the $n=3$ term of~\eqref{eq-cumuexpmeso}}:} By a similar computation as in the $n=2$ case, the terms arising from $\partial_{\nu}(T_{[1,k]})_{yx}$ for $|\nu|=3$ involve either zero, two, or four diagonal entries of a resolvent chain. Whenever a term contains off-diagonal entries, we can include it in the error in~\eqref{eq-mEexchangemeso} by decomposing each resolvent chain entry into a deterministic and a fluctuation part. Note that $M_{(\cdot)}$ is not necessarily diagonal, i.e., applying~\eqref{eq-multiGisotropic} alone is not sufficient. However, as every fluctuation term contributes a factor $\smash{N^{-1/2}}$ and the presence of off-diagonal terms allows us to apply the Cauchy-Schwarz inequality to estimate the remaining (deterministic) double sum, we gain a factor $N^{-1/2}$ over the trivial bound as needed.

\medskip
It remains to evaluate the contributions that consist of four diagonal entries, which can only occur for $\nu\in\{(xy,yx,yx),(yx,xy,yx),(yx,yx,xy)\}$. Here,
\begin{align}
\partial_{\nu}(T_{[1,k]})_{yx}&=-\sum_{1\leq t\leq s\leq r\leq k}(T_{[1,t\rangle}G_t)_{yy}(T_{[t,s\rangle}G_s)_{xx}(T_{[s,r\rangle}G_r)_{yy}(T_{[r,k]})_{xx}\NN\\
&\quad-\sum_{1\leq r\leq s\leq t\leq k}(T_{[1,r\rangle}G_r)_{yy}(T_{[r,s\rangle}G_s)_{xx}(T_{[s,t\rangle}G_t)_{yy}(T_{[t,k]})_{xx}-\dots\label{eq-kap4term}
\end{align}
where the terms that are not written out in the last line involve two off-diagonal entries and, therefore, can be included in the error term. Since $\kappa(xy,\nu)=\kappa_4/N^2$ for the cases considered, we obtain
\begin{align*}
&\sum_{x,y}\sum_{\nu\in\{xy,yx\}^3}\frac{\kappa(xy,\nu)}{6}\E \partial_{\nu}(T_{[1,k]})_{yx}\\
&=\frac{\kappa_4}{2N^2}\sum_{x,y}\Big(-\sum_{1\leq t\leq s\leq r\leq k}(M_{[t]})_{yy}(M_{[t,s]})_{xx}(M_{[s,r]})_{yy}(M_{[r,k]}A_k)_{xx}\\
&\quad\quad\quad\quad\quad\quad-\sum_{1\leq r\leq s\leq t\leq k}(M_{[r]})_{yy}(M_{[r,s]})_{xx}(M_{[s,t]})_{yy}(M_{[t,k]}A_k)_{xx}\Big)+\cO\Big(\frac{N^\eps}{N\, \sqrt{N\eta_*}\ \eta_*^{k-a/2}}\Big)\\
&=-\kappa_4\sum_{1\leq r\leq s\leq t\leq k}\langle M_{[r]}\odot M_{[s,t]}\rangle\langle M_{[r,s]}\odot(M_{[t,k]}A_k)\rangle+\cO\Big(\frac{N^\eps}{N\, \sqrt{N\eta_*}\ \eta_*^{k-a/2}}\Big)
\end{align*}
where the last equality follows from the definition of the Hadamard product.

\medskip
Adding the contributions to~\eqref{eq-cumuexpmeso} together, the cumulant expansion evaluates to
\begin{align*}
-N\E\langle \underline{WT_{[1,k]}}\rangle&=\kappa_4\sum_{1\leq r\leq s\leq t\leq k}\Big(\frac{1}{N^2}\sum_{x,y}(M_{[r]})_{yy}(M_{[r,s]})_{xx}(M_{[s,t]})_{yy}(M_{[t,k]}A_k)_{xx}\Big)\\
&\quad+\cO\Big(\frac{N^\eps}{\sqrt{N\eta_*}\ \eta_*^{k-a/2}}\Big).
\end{align*}
We conclude that $N(\E\langle T_{[1,k]}\rangle-\m[T_1,\dots,T_k])$ coincides with the right-hand side of~\eqref{eq-Erecursion} up to an error term and an additional factor $\kappa_4$. Applying the recursion thus yields
\begin{displaymath}
N(\E\langle T_{[1,k]}\rangle-\m[T_1,\dots,T_k])=\kappa_4\cE[T_1,\dots,T_k]+\cO\Big(\frac{N^\eps}{\sqrt{N\eta_*}\ \eta_*^{k-a/2}}\Big)
\end{displaymath}
as claimed.\qed

\subsection{Proof of Lemma~\ref{lem-covapproxmeso} (Deterministic Approximation of $\E X_\alpha X_\beta$)}\label{sect-covapproxmeso}

We start by noting some general estimates for $X^{(k,a)}_\alpha$ and its derivatives that are needed for the proof of Lemma~\ref{lem-covapproxmeso}.

\begin{lemma}[A priori estimates]\label{lem-apriorimeso}
For $X^{(k,a)}_\alpha=\langle T_{[1,k]}\rangle-\E\langle T_{[1,k]}\rangle$ and its derivatives $\partial_{\nu}X^{(k,a)}_\alpha$ for any multi-index $\nu$, we have the estimate
\begin{equation}\label{eq-apriori1meso}
|\partial_{\nu}X^{(k,a)}_\alpha|=\cO_{\prec}\Big(\frac{1}{N\eta_*^{k-a/2}}\Big).
\end{equation}
Moreover, we have the more precise expansions for the first and second derivatives
\begin{align}
\partial_{xy}X^{(k,a)}_\alpha&=-\frac{1}{N}\Big[\sum_{r=1}^k(M_{(r,\dots,k,1,\dots,r)})_{yx}+\cO_{\prec}\Big(\frac{1}{\sqrt{N\eta_*}\ \eta_*^{k-a/2}}\Big)\Big],\label{eq-apriori2meso}\\
\partial_{vw}\partial_{xy}X^{(k,a)}_\alpha&=\frac{1}{N}\Big[\sum_{r=1}^k\Big(\sum_{s=1}^r(M_{(r,\dots,k,1,\dots,s)})_{wx}(M_{[s,r]})_{yv}\NN\\
&\quad\quad\quad\quad+\sum_{s=r}^k(M_{[r,s]})_{wx}(M_{(s,\dots,k,1,\dots,r)})_{yv}\Big)+\cO_{\prec}\Big(\frac{1}{\sqrt{N\eta_*}\ \eta_*^{k-a/2}}\Big)\Big].\label{eq-apriori3meso}
\end{align}
\end{lemma}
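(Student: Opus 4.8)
The plan is to obtain all three bounds by differentiating $\langle T_{[1,k]}\rangle$ explicitly with the Leibniz rule and then inserting the multi-resolvent local laws of Theorem~\ref{thm-multiG-LL} together with the deterministic estimates of Lemma~\ref{lem-mbounds}. Since $\E\langle T_{[1,k]}\rangle$ does not depend on $W$, one has $\partial_\nu X^{(k,a)}_\alpha=\partial_\nu\langle T_{[1,k]}\rangle$ for every $\nu$ with $|\nu|\geq1$, so it suffices to work with $\langle T_{[1,k]}\rangle$. Using $\partial_{xy}G=-GE_{xy}G$ (with $E_{xy}$ the matrix unit) and the cyclicity of the trace, one derivative gives $\partial_{xy}\langle T_{[1,k]}\rangle=-N^{-1}\sum_{r=1}^k(T_{[r,k]}T_{[1,r\rangle}G_r)_{yx}$, and iterating the product rule shows, by induction on $n=|\nu|\geq1$, that $\partial_\nu\langle T_{[1,k]}\rangle$ is a finite sum of at most $C_nk^n$ terms of the form $\pm N^{-1}\prod_{i=1}^n(C_i)_{a_ib_i}$, where each $C_i$ is a resolvent chain, the indices $a_i,b_i$ run over the entries of $\nu$, the chains collectively involve $k+n$ resolvents and all of $A_1,\dots,A_k$, and $C_i$ carries $\ell_i$ resolvents together with exactly $\ell_i-1$ matrices, of which say $b_i$ are traceless (so $\sum_i\ell_i=k+n$, $\sum_i(\ell_i-1)=k$, $\sum_ib_i=a$). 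The induction step is just that each new derivative hits one resolvent inside one factor, splitting it into two sub-chains and adding one resolvent.

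For the crude bound~\eqref{eq-apriori1meso}, the case $\nu=\emptyset$ is immediate from the averaged local law~\eqref{eq-multiGaveraged}, which identifies $\langle T_{[1,k]}\rangle$ up to $\cO_\prec(1/(N\eta_*^{k-a/2}))$ with the deterministic $\m[T_1,\dots,T_k]$; subtracting $\E\langle T_{[1,k]}\rangle$, which differs from $\m[\cdot]$ by at most the same error, does not change the order. For $n\geq1$ I would write each factor via the isotropic law~\eqref{eq-multiGisotropic} as $(C_i)_{a_ib_i}=(M_{C_i})_{a_ib_i}+\mathrm{fl}_i$ with $\|M_{C_i}\|\lesssim\eta_*^{-(\ell_i-1-\lceil b_i/2\rceil)}$ (Lemma~\ref{lem-mbounds}) and $\mathrm{fl}_i=\cO_\prec\big((N\eta_*)^{-1/2}\eta_*^{-(\ell_i-1-b_i/2)}\big)$, and then expand $\prod_i(C_i)_{a_ib_i}$ into $2^n$ terms. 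Each such term is a product of some $\|M_{C_i}\|$'s and some $\mathrm{fl}_i$'s; using $\lceil b_i/2\rceil\geq b_i/2$ together with $\sum_i(\ell_i-1)=k$ and $\sum_ib_i=a$, its $\eta_*$-exponent is at most $k-a/2$, while the accompanying powers of $(N\eta_*)^{-1/2}\leq1$ only help. Hence $\prod_i(C_i)_{a_ib_i}=\cO_\prec(\eta_*^{-(k-a/2)})$, and~\eqref{eq-apriori1meso} follows from the prefactor $N^{-1}$.

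For the refined expansions, I would obtain~\eqref{eq-apriori2meso} directly from the one-derivative identity above by replacing each entry $(T_{[r,k]}T_{[1,r\rangle}G_r)_{yx}$ by its deterministic approximation: the product $T_{[r,k]}T_{[1,r\rangle}G_r=T_rT_{r+1}\cdots T_kT_1\cdots T_{r-1}G_r$ is a cyclically ordered chain of $k+1$ resolvents carrying all of $A_1,\dots,A_k$, so by~\eqref{eq-multiGisotropic} (applied to the standard basis vectors $e_x,e_y$) it equals $M_{(r,\dots,k,1,\dots,r)}$ up to $\cO_\prec((N\eta_*)^{-1/2}\eta_*^{-(k-a/2)})$, and summing the $k$ terms gives~\eqref{eq-apriori2meso}. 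For~\eqref{eq-apriori3meso} one differentiates~\eqref{eq-apriori2meso} once more, i.e.\ cuts each cyclic chain $G_rA_r\cdots G_{r-1}A_{r-1}G_r$ at every resolvent $G_s$; the cut splits it into two $T$-chains whose cyclically ordered index sets, depending on whether $s$ precedes or follows $r$ along the cycle, are $\{[s,r],(r,\dots,k,1,\dots,s)\}$ or $\{[r,s],(s,\dots,k,1,\dots,r)\}$, and replacing each sub-chain entry by its $M$-approximation via~\eqref{eq-multiGisotropic} (with the labels $v,w,x,y$ distributed into the appropriate slots) produces the two inner sums of~\eqref{eq-apriori3meso}. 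The remaining cross terms $\|M_{C_1}\|\,|(C_2-M_{C_2})_{\cdot\cdot}|$, and the doubly-fluctuating one, are bounded exactly as in the previous step and using $\ell_1+\ell_2=k+2$ by $\cO_\prec((N\eta_*)^{-1/2}\eta_*^{-(k-a/2)})$, which after the prefactor $N^{-1}$ is precisely the stated error.

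The analytic input (the local laws, the inequality $\lceil x/2\rceil+\lceil y/2\rceil\geq\lceil(x+y)/2\rceil$, and the trivial fact $N\eta_*\geq1$) is entirely routine once the differentiation identity is in place; I expect the only genuinely delicate point to be the combinatorial bookkeeping in the last step — matching the cut sub-chains of the cyclic product $G_rA_r\cdots G_{r-1}A_{r-1}G_r$ to the exact index sets $[s,r]$, $(r,\dots,k,1,\dots,s)$, etc., keeping track of the repeated resolvent $G_r$ at the two ends of the cycle (which is why $s=r$ contributes to both inner sums in~\eqref{eq-apriori3meso}), and verifying that each $A_j$ lands in the correct sub-chain and the labels $v,w,x,y$ occupy the right slots.
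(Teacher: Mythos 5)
Your proposal is correct and follows essentially the same route as the paper: compute the first and second derivatives of $\langle T_{[1,k]}\rangle$ explicitly, insert the isotropic local law~\eqref{eq-multiGisotropic} and the norm bounds of Lemma~\ref{lem-mbounds} for each resulting chain entry, handle the general multi-index $\nu$ by the same inductive chain-splitting bookkeeping, and treat $\nu=\emptyset$ via the averaged local law (the paper additionally cites Lemma~\ref{lem-mEexchangemeso} to control $\E\langle T_{[1,k]}\rangle-\m[\cdot]$, which your remark about the expectation inheriting the same error covers). The exponent accounting via $\sum_i(\ell_i-1)=k$, $\sum_i b_i=a$ and $\lceil b_i/2\rceil\geq b_i/2$ is exactly what the paper's terse "follows inductively" step requires.
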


\begin{proof}
The bounds in~\eqref{eq-apriori2meso} and~\eqref{eq-apriori3meso} follow directly from~\eqref{eq-multiGisotropic}, Lemma~\ref{lem-mbounds}, and
\begin{align*}
\partial_{xy}\langle T_{[1,k]}\rangle&=-\frac{1}{N}\sum_{r=1}^k(T_{[r,k]}T_{[1,r\rangle}G_r)_{yx},\\
\partial_{vw}\partial_{xy}\langle T_{[1,k]}\rangle&=\frac{1}{N}\sum_{r=1}^k\Big(\sum_{s=1}^r(T_{[r,k]}T_{[1,s\rangle}G_s)_{wx}(T_{[s,r\rangle}G_r)_{yv}+\sum_{s=r}^k(T_{[r,s\rangle}G_s)_{wx}(T_{[s,k]}T_{[1,r\rangle}G_r)_{yv}\Big).
\end{align*}
The claim~\eqref{eq-apriori1meso} follows inductively by~\eqref{eq-multiGisotropic} and Lemma~\ref{lem-mbounds}. Note that~\eqref{eq-apriori1meso} is also true for $\nu=\emptyset$ since
\begin{displaymath}
X^{(k)}_\alpha=(\langle T_{[1,k]}\rangle-\m[T_1,\dots,T_k])-(\E\langle T_{[1,k]}\rangle-\m[T_1,\dots,T_k])=\cO_{\prec}\Big(\frac{1}{N\eta_*^{k-a/2}}\Big)
\end{displaymath}
as a consequence of~\eqref{eq-multiGaveraged} and Lemma~\ref{lem-mEexchangemeso}.
\end{proof}

\begin{proof}[Proof of Lemma~\ref{lem-covapproxmeso}]
We use proof by induction on the length of the multi-indices $\alpha$ and $\beta$. First, note that the left-hand side of~\eqref{eq-2ndorderLL1meso} vanishes if either $\langle T_{[1,k]}\rangle$ or $\langle T_{[k+1,k+\ell]}\rangle$ is zero, i.e., if one of the terms is indexed by the empty set. Comparing with~\eqref{eq-Minitial}, the base case is established. Due to the symmetry of the expression, it is further sufficient to carry out the induction step for one of the arguments only. Assume that~\eqref{eq-2ndorderLL1meso} holds for multi indices $\alpha$ of length $1,\dots,k-1$ and multi-indices $\beta$ of a fixed length $\ell\geq1$. We need to show that the deterministic approximation of $\smash{N^2\E X^{(k)}_\alpha X^{(\ell)}_\beta}$ is given by $\m[T_1,\dots,T_k|T_{k+1}\dots T_{k+\ell}]$ and estimate the error term. W.l.o.g. assume that each $A_j$ is either traceless or equal to the identity matrix.

\medskip
As a first step, consider resolvent chains $T_{[1,k]}$ that contain at least one deterministic matrix $A_j=\Id$, i.e., that are of the form $T_{[1,k]}=T_{[1,j\rangle}G_jG_{j+1}T_{\langle j+1,k]}$ with the indices being interpreted mod $k$ due to the cyclicity of the trace and the first entry of $\m[\cdot|\cdot]$. In this case, rewriting the product $G_jG_{j+1}$ in terms of a single resolvent allows us to obtain the claim directly from the induction hypothesis. We distinguish two cases depending on the imaginary parts of $z_j$ and $z_{j+1}$.

\medskip
\underline{\smash{Case 1: $\Im z_j$ and $\Im z_{j+1}$ have the same sign:}} Let $s:=\mathrm{sign}(\Im z_j)=\mathrm{sign}(\Im z_{j+1})$ and recall from~\eqref{eq-residueGs} that the product $G_jG_{j+1}$ can be rewritten as a contour integral using the residue theorem. We obtain a similar contour integral formula for $\m[\cdot|\cdot]$ from Corollary~\ref{cor-mproperties}(iii), namely
\begin{align}
&\m[T_1,\dots,T_{j-1},G_j,T_{j+1},\dots,T_k|\beta]\label{eq-residuem}\\
&=\frac{1}{2\pi\ri}\int_{\R}\frac{\m[\dots,T_{j-1},G(x+\ri\eta)A_{j+1},\dots|\beta]-\m[\dots,T_{j-1},G(x-\ri\eta)A_{j+1},\dots|\beta]}{(x+\ri s\eta-z_j)(x+\ri s\eta-z_{j+1})}\dx x.\NN
\end{align}
Recall that the properties of $\m[\cdot|\cdot]$ stated in Corollary~\ref{cor-mproperties} are a consequence of the recursion~\eqref{eq-Mrecursion} and thus independent of Lemma~\ref{lem-covapproxmeso}. Rewriting the left-hand side of~\eqref{eq-2ndorderLL1meso} using~\eqref{eq-residueM} yields a contour integral with an integrand of the form $\smash{N^2\E X^{(k)}_{\alpha'} X^{(\ell)}_\beta}$ for a multi-index $\alpha'$ of length $k-1$ and without the identity matrix $A_j=\Id$, i.e., the number of traceless matrices $a$ is unchanged. By the induction hypothesis, we thus obtain
\begin{align*}
&N^2\E X^{(k)}_\alpha X^{(\ell)}_\beta=\frac{1}{2\pi\ri}\int_{\R}\frac{\m[\dots,T_{j-1},G(x+\ri\eta)A_{j+1},\dots|\beta]-\m[\dots,T_{j-1},G(x-\ri\eta)A_{j+1},\dots|\beta]}{(x+\ri s\eta-z_j)(x+\ri s\eta-z_{j+1})}\dx x\\
&\quad+\cO\Big(\frac{N^\eps}{\sqrt{N\eta_*}\ \eta_*^{(k+\ell-1)-(a+b)/2}}\Big)
\end{align*}
where the integral evaluates to $\m[\alpha|\beta]$ by~\eqref{eq-residuem}. Hence,~\eqref{eq-2ndorderLL1meso} holds in the case considered.

\medskip
\underline{\smash{Case 2: $\Im z_j$ and $\Im z_{j+1}$ have opposite signs:}} Applying~\eqref{eq-resolventid} and the divided difference structure of $\m[\cdot|\cdot]$, it follows that
\begin{align*}
&N^2\E X^{(k,a)}_\alpha X^{(\ell,b)}_\beta\\
&=N^2\Big(\frac{(\langle T_{[1,j\rangle}G_jA_{j+1}T_{\langle j+1,k]}\rangle-\E\langle T_{[1,j\rangle}G_jA_{j+1}T_{\langle j+1,k]}\rangle)-(\langle T_{[1,j\rangle}T_{[j+1,k]}\rangle-\E\langle T_{[1,j\rangle}T_{[j+1,k]}\rangle)}{z_j-z_{j+1}}\Big) X^{(\ell,b)}_\beta\\
&=\frac{\m[T_1,\dots,T_{j-1},T_{j+1},\dots,T_k|\beta]-\m[T_1,\dots,T_{j-1},T_jA_{j+1},T_{j+2},\dots,T_k|\beta]}{z_j-z_{j+1}}\\
&\quad+\cO\Big(\frac{N^\eps}{\sqrt{N\eta_*}\eta_*^{k+\ell-(a+b)/2-1}|z_j-z_{j+1}|}\Big)\\
&=\m[T_1,\dots,T_k|\beta]+\cO\Big(\frac{N^\eps}{\sqrt{N\eta_*}\ \eta_*^{k+\ell-(a+b)/2}}\Big),
\end{align*}
since $|z_j-z_{j+1}|\geq2\eta_*$ in the case considered. This concludes the proof of~\eqref{eq-2ndorderLL1meso} for resolvent chains that contain at least one deterministic matrix $A_j=\Id$.

\medskip
It remains to consider $T_{[1,k]}$ for which all matrices $A_1,\dots,A_k$ are traceless, i.e., for which $a=k$. The argument is similar to the proof of Lemma~\ref{lem-mEexchangemeso}, i.e., we rewrite $\smash{N^2\E X^{(k,a)}_\alpha X^{(\ell,b)}_\beta}$ in terms of covariances of smaller chains and show that it satisfies the recursion~\eqref{eq-Mrecursion} up to an $\smash{N^\eps/(\sqrt{N\eta_*}\eta_*^{k-a/2}\eta_*^{\ell-b/2})}$ error.

\medskip
Combining~\eqref{eq-mesodifference1} and Lemma~\ref{lem-mEexchangemeso} yields the starting point
\begin{align}
X^{(k,a)}_\alpha&=m_1\Big(-(\langle \underline{WT_{[1,k]}}\rangle-\E\langle \underline{WT_{[1,k]}}\rangle)+(\langle T_{[2,k\rangle}G_kA_kA_1\rangle-\E\langle T_{[2,k\rangle}G_kA_kA_1\rangle)\NN\\
&\quad+\sum_{j=1}^{k-1}(\langle T_{[1,j\rangle}G_j\rangle-\E\langle T_{[1,j\rangle}G_j\rangle)\m[T_j,\dots,T_k]\label{eq-Edifference1meso}\\
&\quad+\sum_{j=2}^k\m[T_1,\dots,T_{j-1},G_j](\langle T_{[j,k]}\rangle-\E\langle T_{[j,k]}\rangle)\Big)+\cO_\prec\Big(\frac{1}{N\, \sqrt{N\eta_*}\ \eta_*^{k/2}}\Big).\NN
\end{align}
Next, multiply~\eqref{eq-Edifference1meso} by $\smash{N^2X^{(\ell)}_{\beta}}$ and compute the expectation. Applying the induction hypothesis for any terms for which the first factor involves a resolvent chain of length at most $k-1$ yields
\begin{align*}
N^2\E\Big(X^{(k)}_{\alpha}X^{(\ell)}_{\beta}\Big)&=\m[\alpha|\beta]-\sum_{j=1}^\ell \Big(\m[T_1,\dots,T_k,T_{k+j},\dots,T_{k+j-1},G_{k+j}]\\
&\quad-\kappa_4\sum_{r=1}^k\sum_{s=k+1}^{k+\ell}\Big(\sum_{t=k+1}^s\langle M_{[r]}\odot M_{(s,\dots,k+\ell,k+1,\dots,t)}\rangle\langle(M_{[r,k]}A_k)\odot M_{[t,s]}\rangle\NN\\
&\quad\quad+\sum_{t=s}^{k+\ell}\langle M_{[r]}\odot M_{[s,t]}\rangle\langle(M_{[r,k]}A_k)\odot M_{(t,\dots,k+\ell,k+1,\dots,s)}\rangle\Big)\\
&\quad +N^2\E\big((\langle \underline{WT_{[1,k]}}\rangle-\E\langle \underline{WT_{[1,k]}}\rangle)X^{(\ell)}_{\beta}\big)+\cO\Big(\frac{N^\eps}{\sqrt{N\eta_*}\ \eta_*^{(k+\ell)/2}}\Big),
\end{align*}
where we used the recursion~\eqref{eq-Mrecursion} to introduce $\m[\alpha|\beta]$. It remains to compute
\begin{align*}
N^2\E\big((\langle \underline{WT_{[1,k]}}\rangle-\E\langle \underline{WT_{[1,k]}}\rangle)X^{(\ell)}_{\beta}\big)=N^2\E\big(\langle \underline{WT_{[1,k]}}\rangle X^{(\ell)}_{\beta}\big).
\end{align*}
By cumulant expansion, we obtain
\begin{align}
N^2\E\big(\langle \underline{WT_{[1,k]}}\rangle X^{(\ell)}_{\beta}\big)&=-\sum_{j=1}^\ell\E\langle T_{[1,k]}T_{[k+j,k+\ell]}T_{[1,k+j\rangle}G_{k+j}\rangle\NN\\
&\quad+N\sum_{n=2}^3\sum_{x,y\in[N]}\sum_{\nu\in\{xy,yx\}^n}\frac{\kappa(xy,\nu)}{n!}\E \partial_{\nu}\Big((T_{[1,k]})_{yx}X^{(\ell)}_{\beta}\Big)\label{eq-cumuexpmeso2}\\
&\quad+\cO\Big(\frac{N^\eps}{\sqrt{N\eta_*}\ \eta_*^{(k+\ell)/2}}\Big)\NN,
\end{align}
where the $n=1$ term was evaluated using~\eqref{eq-underline} and the terms for $n\geq4$ were again estimated trivially using that $\kappa(xy,\nu)\sim N^{-(|\nu|+1)/2}$ due to the scaling of $W$ as well as the bounds from Lemmas~\ref{lem-mbounds} and~\ref{lem-apriorimeso}. By the isotropic local law~\eqref{eq-multiGisotropic} we have
\begin{displaymath}
\E\langle T_{[1,k]}T_{[k+j,k+\ell]}T_{[1,k+j\rangle}G_{k+j}\rangle=\m[T_1,\dots,T_k,T_{k+j},\dots,T_{k+j-1},G_{k+j}]+\cO\Big(\frac{N^\eps}{(N\eta_*)\ \eta_*^{(k+\ell)/2}}\Big),
\end{displaymath}
which yields an error term that can be included in the error in~\eqref{eq-2ndorderLL1meso}. Hence, it only remains to consider the contributions for $n=2$ and $n=3$.

\medskip
\underline{\smash{Estimate for the $n=2$ term of~\eqref{eq-cumuexpmeso2}}:} Evaluating the derivative $\smash{\partial_{\nu}((T_{[1,k]})_{yx}X^{(\ell)}_{\beta})}$ always yields at least one $xy$ or $yx$ entry of a resolvent chain, which we can use to apply isotropic resummation (cf. estimate for~\eqref{eq-exestimate} above). Note that $\smash{X^{(\ell)}_\beta}$ always contributes a factor $N^{-1}$ due to Lemma~\ref{lem-apriorimeso}, either from evaluating derivatives using~\eqref{eq-apriori2meso} and~\eqref{eq-apriori3meso}, or from the trivial estimate~\eqref{eq-apriori1meso}. The additional factor $N$ in front of the $x,y$ summation in~\eqref{eq-cumuexpmeso2} is thus balanced out. Overall, we obtain
\begin{displaymath}
N\sum_{x,y}\sum_{\nu\in\{xy,yx\}^2}\frac{\kappa(xy,\nu)}{2}\E \partial_{\nu}\Big((T_{[1,k]})_{yx}X^{(\ell)}_{\beta}\Big)=\cO\Big(\frac{N^\eps}{\sqrt{N\eta_*}\ \eta_*^{k-a/2}\eta_*^{\ell-b/2}}\Big).
\end{displaymath}
In particular, the $n=2$ term can be included in the error in the last line of~\eqref{eq-cumuexpmeso2}.

\medskip
\underline{\smash{Computation of the $n=3$ term of~\eqref{eq-cumuexpmeso2}}:} By applying the Leibniz rule, we distribute the derivatives as
\begin{displaymath}
\E \partial_{\nu}\Big((T_{[1,k]})_{yx}X^{(\ell)}_{\beta}\Big)=\sum_{\nu_1\cup\nu_2=\nu}\E\Big(\partial_{\nu_1}(T_{[1,k]})_{yx}\partial_{\nu_2}X^{(\ell)}_{\beta}\Big)
\end{displaymath}
and distinguish three cases for $\nu_1$:

\medskip
Case 1 ($|\nu_1|=3$): The derivative $\partial_{\nu_1}(T_{[1,k]})_{yx}$ arising in this case was already computed in the proof of Lemma~\ref{lem-mEexchangemeso}. Using~\eqref{eq-kap4term} and~\eqref{eq-multiGisotropic}, it follows that
\begin{align}
&N\sum_{x,y}\sum_{\nu\in\{xy,yx\}^3}\frac{\kappa(xy,\nu)}{6}\E \Big((\partial_{\nu}(T_{[1,k]})_{yx})X^{(\ell)}_{\beta}\Big)\NN\\
&=-\frac{\kappa_4}{N}\sum_{x,y}\sum_{1\leq r\leq s\leq t\leq k}(M_{[r]})_{yy}(M_{[r,s]})_{xx}(M_{[s,t]})_{yy}(M_{[t,k]}A_k)_{xx}\E X^{(\ell)}_{\beta}+\cO\Big(\frac{N^\eps}{\sqrt{N\eta_*}\ \eta_*^{k-a/2}\eta_*^{\ell-b/2}}\Big)\NN\\
&=\cO\Big(\frac{N^\eps}{\sqrt{N\eta_*}\ \eta_*^{k-a/2}\eta_*^{\ell-b/2}}\Big),
\end{align}
since $X^{(\ell)}_{\beta}$ is centered. We hence include the term in the error in~\eqref{eq-cumuexpmeso2}.

\medskip
Case 2 ($|\nu_1|=1$): Evaluating the $\partial_\nu$ derivative yields either zero, two, or four diagonal entries of a resolvent chain. Whenever the term contains an $xy$ or $yx$ entry, we use isotropic resummation to identify the term as part of the error in~\eqref{eq-cumuexpmeso2}. It remains to consider the case $\nu_1=(yx)$ and $\nu_2\in\{(xy,yx),(yx,xy)\}$. Here, we compute
\begin{align*}
\partial_{\nu_1}(T_{[1,k]})_{yx}&=-\sum_{r=1}^k(T_{[1,r\rangle}G_r)_{yy}(T_{[r,k]})_{xx},\\
\partial_{\nu_2}X^{(\ell)}_{\beta}&=\frac{1}{N}\sum_{s=k+1}^{k+\ell}\Big(\sum_{t=k+1}^s(T_{[s,k+\ell]}T_{[k+1,t\rangle}G_t)_{yy}(T_{[t,s\rangle}G_s)_{xx}\\
&\quad+\sum_{t=s}^{k+\ell}(T_{[s,t\rangle}G_t)_{yy}(T_{[t,k+\ell]}T_{[k+1,s\rangle}G_s)_{xx}\Big)+\dots,
\end{align*}
where the terms that are left out contain at least one off-diagonal entry of a resolvent chain and hence can be included in the error term in~\eqref{eq-cumuexpmeso2} by isotropic resummation. Applying~\eqref{eq-multiGisotropic} and recalling that $\kappa(xy,\nu)=\kappa_4/N^2$ in the case considered, it follows that
\begin{align}
&N\sum_{x,y}\frac{\kappa_4}{N^2}\E \Big(\big(\partial_{\nu_1}(T_{[1,k]})_{yx}\big)\big(\partial_{\nu_2}X^{(\ell)}_{\beta}\big)\Big)\NN\\
&=-\frac{\kappa_4}{N^2}\sum_{x,y}\Big(\sum_{r=1}^k\sum_{s=k+1}^{k+\ell}(M_{[r]})_{yy}(M_{[r,k]}A_k)_{xx}\Big(\sum_{t=k+1}^s(M_{(s,\dots,k+\ell,k+1,\dots,t)})_{yy}(M_{[t,s]})_{xx}\NN\\
&\quad+\sum_{t=s}^{k+\ell}(M_{[s,t]})_{yy}(M_{(t,\dots,k+\ell,k+1,\dots,s)})_{xx}\Big)\Big)+\cO\Big(\frac{N^\eps}{\sqrt{N\eta_*}\ \eta_*^{k-a/2}\eta_*^{\ell-b/2}}\Big)\label{eq-source2}
\end{align}
which can again be rewritten using the definition of the Hadamard product. Note that we obtain six terms of the form~\eqref{eq-source2} in total from applying the Leibniz rule.

\medskip
Case 3 ($|\nu_1|$ even): For both $|\nu_1|=0$ and $|\nu_1|=2$, the derivative always contains at least one off-diagonal entry of a resolvent chain. Therefore, we can apply isotropic resummation and include the term in the error in~\eqref{eq-cumuexpmeso2}.

\medskip
Adding the contributions to~\eqref{eq-cumuexpmeso2} back together, the cumulant expansion evaluates to
\begin{align*}
&-N^2\E\big(\langle \underline{WT_{[1,k]}}\rangle X^{(\ell)}_{\beta}\big)\\
&=\sum_{j=1}^\ell\m[T_1,\dots,T_k,T_{k+j},\dots,T_{k+\ell},T_{k+1},\dots,T_{k+j-1},G_{k+j}]\\
&\quad+\kappa_4\sum_{r=1}^k\sum_{s=k+1}^{k+\ell}\Big(\sum_{t=k+1}^s\langle M_{[r]}\odot M_{(s,\dots,k+\ell,k+1,\dots,t)}\rangle\langle(M_{[r,k]}A_k)\odot M_{[t,s]}\rangle\NN\\
&\quad\quad+\sum_{t=s}^{k+\ell}\langle M_{[r]}\odot M_{[s,t]}\rangle\langle(M_{[r,k]}A_k)\odot M_{(t,\dots,k+\ell,k+1,\dots,s)}\rangle\Big)
+\cO\Big(\frac{N^\eps}{\sqrt{N\eta_*}\ \eta_*^{k-a/2}\eta_*^{\ell-b/2}}\Big).
\end{align*}
We conclude that, up to an $\cO(N^\eps/\sqrt{N})$ error, $\smash{\E(X^{(k)}_\alpha X^{(\ell)}_\beta)}$ equals the right-hand side of~\eqref{eq-Mrecursion}. Applying the recursion yields
\begin{displaymath}
\E\Big(X^{(k)}_\alpha X^{(\ell)}_\beta\Big)=\m[\alpha|\beta]+\cO\Big(\frac{N^\eps}{\sqrt{N\eta_*}\ \eta_*^{k-a/2}\eta_*^{\ell-b/2}}\Big)
\end{displaymath}
which completes the induction step.
\end{proof}

It remains to establish the alternative integral representation for $m_{GUE}[\cdot|\cdot]$.

\begin{proof}[Proof of Corollary~\ref{cor-mintegral}]
We use proof by induction, starting with the base case $k=\ell=1$. Here, the key to establishing~\eqref{eq-integralrep} lies in the fact that the function $m_{GUE}[1|2]$ and its counterpart for $W$ being a GOE matrix only differ by a constant factor. More precisely, evaluating (92) of~\cite{CES-functCLT} for GOE ($\sigma=1$, $\kappa_4=0$, $\widetilde{\om}_2=0$) yields 
\begin{equation}\label{eq-limit2}
\lim_{N\rightarrow\infty}N^2\Cov(\langle G_1-\E G_1\rangle,\overline{\langle G_2-\E G_2\rangle})=2\cdot\frac{m_1'm_2'}{(1-m_1m_2)^2}=2\cdot m_{GUE}[1|2].
\end{equation}
This allows us to obtain the desired integral representation for $m_{GUE}[1|2]$ from~\cite[Thm.~2.3]{DiazJaramilloPardo2022}, which only applies to real symmetric Gaussian matrices. Applying the theorem yields
\begin{equation}\label{eq-limit1}
\lim_{N\rightarrow\infty}N^2\Cov(\langle G_1-\E G_1\rangle,\overline{\langle G_2-\E G_2\rangle})=\int_\R\int_\R\frac{1}{(x-z_1)^2}\frac{1}{(y-z_2)^2}u(x,y)\dx x\dx y,
\end{equation}
where $u$ is given by
\begin{equation}\label{eq-kernel2}
u(x,y)=\frac{\sqrt{4-x^2}\sqrt{4-y^2}}{\pi^2}\cdot\int_0^1\frac{1-w^2}{w^2(x-y)^2-wxy(1-w)^2+(1-w^2)^2}\dx w.
\end{equation}
In the notation of~\cite{DiazJaramilloPardo2022}, Equation~\eqref{eq-limit1} corresponds to considering a matrix-valued Gaussian process for which the distribution at time~1 coincides with the law of a GOE matrix $V$, as well as the functions $f(x)=(x-z_1)^{-1}$ and ${g(y)=(y-z_2)^{-1}}$ for fixed $z_1,z_2\in\C$ with $\Im z_1,\Im z_2\gtrsim1$. We remark that the functions $f$ and $g$ indeed satisfy the polynomial bound assumed in~\cite[Thm.~2.3]{DiazJaramilloPardo2022}. As~\eqref{eq-limit1} is obtained for the fixed times $s=t=1$, we omit the time dependence from the kernel. By substituting $v=(\frac{1-w}{1+w})^2$ and using partial fractions, the $w$-integration in~\eqref{eq-kernel2} can be carried out explicitly, yielding the form in~\eqref{eq-kernel}.

\medskip
Comparing~\eqref{eq-limit2} and~\eqref{eq-limit1}, we obtain~\eqref{eq-integralrep} in the case $k=\ell=1$.

\medskip
The induction step uses the divided difference structure from Corollary~\ref{cor-mproperties}(iii) and is similar to the proof of \cite[Lem.~4.1]{CES-thermalization}. Assume that the integral representation~\eqref{eq-integralrep} holds for $m_{GUE}[S_1|S_2]$ with $|S_1|=k$ and $|S_2|\leq\ell$ for some fixed $k\geq1$. We start by rewriting
\begin{align*}
&m_{GUE}[1,\dots,k|k+1,\dots,k+\ell+1]\\
&=\frac{m_{GUE}[1,\dots,k|k+2,\dots,k+\ell+1]-m_{GUE}[1,\dots,k|k+1,k+3,\dots,k+\ell+1]}{z_{k+2}-z_{k+1}},
\end{align*}
where the induction hypothesis applies to both summands in the nominator, respectively. Noting that
\begin{align*}
&\sum_{i=k+2}^{k+\ell}\frac{1}{(y-z_i)^2}\cdot\prod_{j\neq i}\frac{1}{y-z_j}-\sum_{\substack{i=k+1\\ i\neq k+2}}^{k+\ell}\frac{1}{(y-z_i)^2}\cdot\prod_{j\neq i}\frac{1}{y-z_j}\\
&=(z_{k+2}-z_{k+1})\sum_{i=k+1}^{k+\ell}\frac{1}{(y-z_i)^2}\cdot\prod_{j\neq i}\frac{1}{y-z_j},
\end{align*}
we obtain~\eqref{eq-integralrep} as claimed. Since $m_{GUE}[S_1|S_2]=m_{GUE}[S_2|S_1]$ by definition, the same argument applies for the other entry of $m_{GUE}[\cdot|\cdot]$.
\end{proof}

\subsection{Proof of Theorem~\ref{thm-resolventCLTmeso} (CLT for Resolvents)}\label{sect-proof-resolventCLT}
We use proof by induction over the number of factors on the left-hand side of~\eqref{eq-resolventCLTmeso}. To keep the notation simple, we drop the superscripts $(k_j,a_j)$ of $\smash{X_{\alpha_j}^{(k_j,a_j)}}$ throughout this section. First, the base case $p=1,2$ readily follows from the definition and Lemma~\ref{lem-covapproxmeso}, which give $\E X_{\alpha_1}=0$ and
\begin{align*}
N^2\E (X_{\alpha_1} X_{\alpha_2})=\m[\alpha_1|\alpha_2]+\cO\Big(\frac{N^\eps}{\sqrt{N\eta_*}\ \eta_*^{k_1-a_1/2}\eta_*^{k_2-a_2/2}}\Big),
\end{align*}
respectively. Next, fix $p\in\N$ and assume that
\begin{equation}\label{eq-indhyp1}
N^p\E\Big(\prod_{j=1}^p X_{\alpha_j}\Big)=\sum_{Q\in Pair(\{1,\dots n\})}\prod_{(i,j)\in Q}\m[\alpha_i|\alpha_j]+\cO\Big(\frac{N^\eps}{\sqrt{N\eta_*}\ \prod_{l=1}^p\eta_*^{k_l-a_l/2}}\Big)
\end{equation}
holds for $n=1,\dots,p$. It remains to consider $N^{p+1}\E (X_{\alpha_1}\dots X_{\alpha_{p+1}})$. When writing out any of the factors, we label the spectral parameters and matrices involved with a superscript $1,\dots,p+1$ according to the factor they appear in.

\medskip
We start by considering the case $k_1=1$ and inductively extend the statement to larger products. 
For the first part of the induction step, we need to compute
\begin{equation}\label{eq-product1}
N^{p+1}\E \Big((\langle T_1^{(1)}\rangle-\E\langle T_1^{(1)}\rangle)X_{\alpha_2}\dots X_{\alpha_{p+1}}\Big).
\end{equation}
Combining~\eqref{eq-Gidentity2} and Lemma~\ref{lem-mEexchangemeso} yields
\begin{align}
\langle T_1^{(1)}\rangle-\E\langle T_1^{(1)}\rangle&=-m(z_1^{(1)})\Big(\langle\underline{WT_1^{(1)}}\rangle+q_{1,1}^{(1)}\langle\underline{WG_1^{(1)}}\rangle\langle A_1^{(1)}\rangle\Big)\NN\\
&\quad-\frac{\kappa_4}{N}\cE[T_1]+\cO\Big(\frac{N^\eps}{N\ \sqrt{N\eta_*}\ \eta_*^{k_1-a_1/2}}\Big),\label{eq-smallchainleading}
\end{align}
where the superscript in $q^{(1)}$ indicates that the arguments are taken from $\alpha_1$. We use~\eqref{eq-smallchainleading} to replace the first factor in~\eqref{eq-product1}. The underlined terms are again treated by cumulant expansion. This yields
\begin{align}
&-m(z_1^{(1)})N^{p+1}\E\Big(\langle \underline{WT_1}\rangle X_{\alpha_2}\dots X_{\alpha_{p+1}}\Big)\NN\\
&=m(z_1^{(1)})\sum_{i=2}^{p+1}\sum_{j=1}^{k_i}\E\Big(\langle T_1^{(1)}T_j^{(i)}\dots T_{k_i}^{(i)}T_1^{(i)}\dots T_{j-1}^{(i)}G_j^{(i)}\rangle\cdot N^{p-1}\prod_{r\neq 1,i}X_{\alpha_r}\Big)\NN\\
&\quad-m(z_1^{(1)})N^p\sum_{n\geq2}\sum_{x,y\in[N]}\sum_{\nu\in\{xy,yx\}^n}\frac{\kappa(xy,\nu)}{n!}\E \partial_{\nu}\Big((T_1^{(1)})_{yx}X_{\alpha_2}\dots X_{\alpha_{p+1}}\Big),\label{eq-cumuexp3}
\end{align}
where the $n=1$ contribution was again evaluated using~\eqref{eq-underline} and the computations in the proof of Lemma~\ref{lem-apriorimeso}. Note that we obtain one term for each resolvent in the product $X_{\alpha_2}\dots X_{\alpha_{p+1}}$ from applying the Leibniz rule. By the local law~\eqref{eq-multiGaveraged}, we have
\begin{align*}
\E\langle T_1^{(1)}T_j^{(i)}\dots T_{k_i}^{(i)}T_1^{(i)}\dots T_{j-1}^{(i)}G_j^{(i)}\rangle=\m[T_1^{(1)},T_j^{(i)},\dots,T_{j-1}^{(i)},G_j^{(i)}]+\cO\Big(\frac{N^\eps}{N\eta_*\ \eta_*^{k_i-(a_1+a_i)/2}}\Big).
\end{align*}
Further, note that every $X_{\alpha_j}$ contains a normalized trace, which yields a total of $p$ factors~$N^{-1}$ when the derivative $\smash{\partial_{\nu}((T_1^{(1)})_{yx}X_{\alpha_2}\dots X_{\alpha_{p+1}})}$ is evaluated. We can hence argue as in the proof of Lemma~\ref{lem-covapproxmeso} to conclude that the contributions for $n\geq4$ in~\eqref{eq-cumuexp3} are sub-leading. Similarly, most of the terms arising for $n=2$ and $n=3$ in~\eqref{eq-cumuexp3} were already computed in the proof of Lemma~\ref{lem-covapproxmeso} and their treatment here is analogous. We, therefore, focus on the differences between~\eqref{eq-cumuexpmeso2} and~\eqref{eq-cumuexp3} below. Recall that $|\nu_1|$ denotes the total number of derivatives acting on $\smash{(T_1^{(1)})_{yx}}$ in a given term.

\medskip
\underline{\smash{Estimate for the $n=2$ term of~\eqref{eq-cumuexp3}}:} Compared to~\eqref{eq-cumuexpmeso2}, new terms only arise if ${|\nu_1|=0}$ and the two derivatives act on different factors of the product $X_{\alpha_2}\dots X_{\alpha_{p+1}}$. As the resulting derivative always includes the off-diagonal entry~$\smash{(T_1^{(1)})_{yx}}$, we obtain
\begin{displaymath}
N^p\sum_{x,y}\sum_{\nu\in\{xy,yx\}^2}\frac{\kappa(xy,\nu)}{2}\E \partial_{\nu}\Big((T_1^{(1)})_{yx}X_{\alpha_2}\dots X_{\alpha_{p+1}}\Big)=\cO\Big(\frac{N^\eps}{\sqrt{N\eta_*}\ \prod_{l=1}^{p+1}\eta_*^{k_l-a_l/2}}\Big)
\end{displaymath}
by isotropic resummation. Recall that every $X_{\alpha_j}$ contains a normalized trace such that the factor $N^{-p}$ in front of the sum is balanced out.

\medskip
\underline{\smash{Computation of the $n=3$ term of~\eqref{eq-cumuexp3}}:} We distinguish three cases for $|\nu_1|$.

\medskip
Case 1 ($|\nu_1|=3$): After evaluating the derivative, we apply~\eqref{eq-multiGisotropic} to get
\begin{align}
&N\sum_{x,y}\sum_{\nu\in\{xy,yx\}^3}\frac{\kappa(xy,\nu)}{6}\E \Big((\partial_{\nu}(T_1)_{yx})X_{\alpha_2}\dots X_{\alpha_{p+1}}\Big)\NN\\
&=-\kappa_4m(z_1^{(1)})^4\langle A_1^{(1)}\rangle\E\Big(X_{\alpha_2}\dots X_{\alpha_{p+1}}\Big)+\cO\Big(\frac{N^\eps}{\sqrt{N\eta_*}\ \prod_{l=1}^{p+1}\eta_*^{k_l-a_l/2}}\Big).\NN
\end{align}
In particular, adding the above terms for the cumulant expansions resulting from the two underlined terms in~\eqref{eq-smallchainleading} yields
\begin{align*}
&-m(z_1^{(1)})\Big(-\kappa_4m(z_1^{(1)})^4\langle A_1^{(1)}\rangle-\kappa_4q_{1,1}^{(1)}m(z_1^{(1)})^4\langle A_1^{(1)}\rangle\Big)\E\Big(X_{\alpha_2}\dots X_{\alpha_{p+1}}\Big)\\
&=\kappa_4\cE[T_1]\E\Big(X_{\alpha_2}\dots X_{\alpha_{p+1}}\Big).
\end{align*}

\medskip
Case 2 ($|\nu_1|=1$): Whenever the two remaining derivatives act on different factors of the product $X_{\alpha_2}\dots X_{\alpha_{p+1}}$,~\eqref{eq-apriori2meso} always yields an off-diagonal entry of a resolvent chain that can be used to apply isotropic resummation. We can thus include any terms of this build in the error in~\eqref{eq-cumuexp3}. Otherwise, we evaluate the term similar to~\eqref{eq-source2}. Let again $\nu_2=\nu\setminus\nu_1$. For $i=2,\dots,p+1$, it follows that
\begin{align*}
&N^p\sum_{x,y}\frac{\kappa_4}{N^2}\E \Big(\big(\partial_{\nu_1}(T_1)_{yx}\big)X_{\alpha_2}\dots X_{\alpha_{i-1}}\big(\partial_{\nu_2}X_{\alpha_i}\big)X_{\alpha_{i+1}}\dots X_{\alpha_{p+1}}\Big)\NN\\
&=-\kappa_4\sum_{s=1}^{k_i}\Big(\sum_{t=1}^sm(z_1^{(1)})^2\langle A_1^{(1)}\odot M_{[s,t]}^{(i)})\rangle\langle M_{(t,\dots,k_i,1,\dots,s)}^{(i)}\rangle\NN\\
&\quad +\sum_{t=s}^{k_i}m(z_1^{(1)})^2\langle A_1^{(1)}\odot M_{(s,\dots,k_i,1,\dots,t)}^{(i)})\rangle\langle M_{[t,s]}^{(i)})\rangle\Big)\E\Big(\prod_{r\neq1,i}X_{\alpha_r}\Big)+\cO\Big(\frac{N^\eps}{\sqrt{N\eta_*}\ \prod_{l=1}^{p+1}\eta_*^{k_l-a_l/2}}\Big),
\end{align*}
where the superscript in $M^{(i)}$ indicates that the arguments are taken from $\alpha_i$. Recall that~$\odot$ denotes the Hadamard product. We emphasize that this term is obtained six times in total when all mixed derivatives obtained from the Leibniz rule are summed up.

\medskip
The last case for $n=3$ ($|\nu_1|$ even) can be treated similarly to the corresponding cases of~\eqref{eq-cumuexpmeso2}. We omit the details.

\medskip
Overall, the cumulant expansion~\eqref{eq-cumuexp3} evaluates to
\begin{align*}
&N^{p+1}\E \Big(-m(z_1^{(1)})\langle \underline{WT_1^{(1)}}\rangle X_{\alpha_2}\dots X_{\alpha_{p+1}}\Big) \\
&=N^{p-1}\E\Big(\prod_{r\neq1,j}X_{\alpha_r}\Big)\cdot m(z_1^{(1)})\sum_{i=2}^{p+1}\Big[\sum_{j=1}^{k_i}\m[T_1^{(1)},T_j^{(i)}\dots T_{j-1}^{(i)},G_j^{(i)}]\\
&\quad+q_{1,1}^{(1)}\sum_{j=1}^{k_i}\m[G_1^{(1)},T_j^{(i)}\dots T_{j-1}^{(i)},G_j^{(i)}]\langle A_1^{(1)}\rangle\\
&\quad+\kappa_4\sum_{s=1}^{k_i}\Big(\sum_{t=1}^sm(z_1^{(1)})^2\langle A_1^{(1)}\odot M_{[s,t]}^{(i)}\rangle\langle M_{(t,\dots,k_i,1,\dots,s)}^{(i)}\rangle\\
&\quad+\sum_{t=s}^{k_i}m(z_1^{(1)})^2\langle A_1^{(1)}\odot M_{(t,\dots,k_i,1,\dots,s)}^{(i)}\rangle\langle M_{[s,t]}^{(i)}\rangle\Big)\\
&\quad +\kappa_4q_{1,1}^{(1)}\sum_{s=1}^{k_i}\Big(\sum_{t=1}^sm(z_1^{(1)})^2\langle A_1^{(1)})_{xx}\odot M_{[s,t]}^{(i)})\rangle\langle M_{(t,\dots,k_i,1,\dots,s)}^{(i)}\rangle\\
&\quad+\sum_{t=s}^{k_i}m(z_1^{(1)})^2\langle A_1^{(1)}\odot M_{(t,\dots,k_i,1,\dots,s)}^{(i)}\rangle\langle M_{[s,t]}^{(i)}\rangle\Big)\langle A_1^{(1)}\rangle\Big]+\cO\Big(\frac{N^\eps}{\sqrt{N\eta_*}\ \prod_{l=1}^{p+1}\eta_*^{k_l-a_l/2}}\Big).
\end{align*}
Adding the contribution for all three terms in~\eqref{eq-smallchainleading} together allows rewriting~\eqref{eq-product1} as
\begin{align*}
&N^{p+1}\E \Big((\langle T_1^{(1)}\rangle-\E\langle T_1^{(1)}\rangle)X_{\alpha_2}\dots X_{\alpha_{p+1}}\Big)\\
&=\sum_{i=2}^{p+1}\m[T_1^{(1)}|T_1^{(i)},\dots,T_{k_i}^{(i)}]\cdot N^{p-1}\E\Big(\prod_{r\neq1,j}X_{\alpha_r}\Big)+\cO\Big(\frac{N^\eps}{\sqrt{N\eta_*}\ \prod_{l=1}^{p+1}\eta_*^{k_l-a_l/2}}\Big),
\end{align*}
where we used~\eqref{eq-Mrecursion} with $k=1$ and $\ell=k_i$. As the remaining product only consists of $p-1$ factors, we can apply the induction hypothesis~\eqref{eq-indhyp1} and conclude
\begin{equation}\label{eq-indhyp2}
N^{p+1}\E\Big(\prod_{j=1}^{p+1} X_{\alpha_j}\Big)=\sum_{Q\in Pair([p+1])}\prod_{(i,j)\in Q}\m[\alpha_i|\alpha_j]+\cO\Big(\frac{N^\eps}{\sqrt{N\eta_*}\ \prod_{l=1}^{p+1}\eta_*^{k_l-a_l/2}}\Big)
\end{equation}
for the case $\smash{\alpha_1=\{(z_1^{(1)},A_1^{(1)})\}}$. Assume next that~\eqref{eq-indhyp2} holds for all multi-indices $\alpha_1$ that consist of up to $k_1-1$ pairs $\smash{(z_j^{(1)},A_j^{(1)})}$, and consider $\alpha_1$ of length $k_1$. As in the base case, we replace the factor $X_{\alpha_1}$ in the product by its leading term using~\eqref{eq-Edifference1meso}. This gives
\begin{align}
&N^{p+1}\E \Big(X_{\alpha_1}\dots X_{\alpha_{p+1}}\Big)\NN\\
&=N^{p+1}m(z_1^{(1)})\E \Big(\big(-\langle \underline{WT_1^{(1)}\dots T_{k_1}^{(1)}}\rangle\NN\\
&\quad+(\langle T_2^{(1)}\dots T_{k_1-1}^{(1)}G_{k_1}^{(1)}A_{k_1}^{(1)}A_1^{(1)}\rangle-\E\langle T_2^{(1)}\dots T_{k_1-1}^{(1)}G_{k_1}^{(1)}A_{k_1}^{(1)}A_1^{(1)}\rangle\NN\\
&\quad +\frac{\kappa_4}{N}\cE[T_2^{(1)},\dots,T_{k_1-1}^{(1)},G_{k_1}^{(1)}A_{k_1}^{(1)}A_1^{(1)}])\NN \\
&\quad+\sum_{j=1}^{k-1}(\langle T_1^{(1)}\dots T_{j-1}^{(1)}G_j^{(1)}\rangle-\E\langle T_1^{(1)}\dots T_{j-1}^{(1)}G_j^{(1)}\rangle+\frac{\kappa_4}{N}\cE[T_1^{(1)},\dots,T_{j-1}^{(1)},G_j^{(1)}])\m[T_j^{(1)},\dots,T_{k_1}^{(1)}]\NN\\
&\quad+\sum_{j=2}^k\m[T_1^{(1)},\dots,T_{j-1}^{(1)},G_j^{(1)}](\langle T_j^{(1)}\dots T_{k_1}^{(1)}\rangle-\E\langle T_j^{(1)}\dots T_{k_1}^{(1)}\rangle+\frac{\kappa_4}{N}\cE[T_j^{(1)},\dots,T_{k_1}^{(1)}])\big)\NN\\
&\quad+\frac{\kappa_4}{N}\cE[T_1^{(1)},\dots,T_{k_1}^{(1)}]\Big)X_{\alpha_2}\dots X_{\alpha_{p+1}}\Big)+\cO\Big(\frac{N^\eps}{\sqrt{N\eta_*}\ \prod_{l=1}^{p+1}\eta_*^{k_l-a_l/2}}\Big),\label{eq-factorreplacedgeneral}
\end{align}
where the underlined term can again be treated by cumulant expansion. Similar to~\eqref{eq-cumuexp3}, we evaluate
\begin{align}
&N^{p+1}\E \Big(-m(z_1^{(1)})\langle \underline{WT_1^{(1)}\dots T_{k_1}^{(1)}}\rangle X_{\alpha_2}\dots X_{\alpha_{p+1}}\Big)\NN\\
&=m(z_1^{(1)})\sum_{i=2}^{p+1}\Bigg(\sum_{j=1}^{k_i}\m[T_1^{(1)},\dots, T_{k_1}^{(1)},T_j^{(i)},\dots ,T_{j-1}^{(i)},G_j^{(i)}]\NN\\
&\quad+\kappa_4\sum_{r=1}^{k_1}\sum_{s=k+1}^{k+\ell}\Big(\sum_{t=1}^s\langle M_{[r]}^{(1)}\odot M_{(s,\dots,k_i,1,\dots,t)}^{(i)}\rangle\langle(M_{[r,k]}^{(1)}A_k^{(1)})\odot M_{[t,s]}^{(i)}\rangle\label{eq-cumuexp4}\\
&\quad+\sum_{t=s}^{k_i}\langle M_{[r]}^{(1)}\odot M_{[s,t]}^{(i)}\rangle\langle(M_{[r,k]}^{(1)}A_k^{(1)})\odot M_{(t,\dots,k_i,1,\dots,s)}^{(i)}\rangle\Big)\Bigg)N^{p-1}\E\Big(\prod_{r\neq 1,i}X_{\alpha_r}\Big)\NN\\
&\quad +\cO\Big(\frac{N^\eps}{\sqrt{N\eta_*}\ \prod_{l=1}^{p+1}\eta_*^{k_l-a_l/2}}\Big),\NN
\end{align}
and apply the induction hypothesis~\eqref{eq-indhyp1} for the product of $p-1$ factors. Note that any terms remaining in~\eqref{eq-factorreplacedgeneral} now involve at most $k_1-1$ resolvents in the first factor. Hence,~\eqref{eq-indhyp2} applies and we obtain, e.g.,
\begin{align*}
&N^{p+1}\E\Big((\langle T_2^{(1)}\dots T_{k_1-1}^{(1)}G_{k_1}^{(1)}A_{k_1}^{(1)}A_1^{(1)}\rangle-\E\langle T_2^{(1)}\dots T_{k_1-1}^{(1)}G_{k_1}^{(1)}A_{k_1}^{(1)}A_1^{(1)}\rangle)\prod_{r=2}^{p+1}X_{\alpha_r}\Big)\\
&=\sum_{r=2}^{p+1}\m[T_2^{(1)},\dots, T_{k_1-1}^{(1)},G_{k_1}^{(1)}A_{k_1}^{(1)}A_1^{(1)}|T_1^{(r)},\dots,T_{k_i}^{(r)}]\Big(\sum_{Q\in Pair([p+1]\setminus\{1,i\})}\prod_{(i,j)\in Q}\m[\alpha_i|\alpha_j]\Big)\NN\\
&\quad+\cO\Big(\frac{N^\eps}{\sqrt{N\eta_*}\ \prod_{l=1}^{p+1}\eta_*^{k_l-a_l/2}}\Big).
\end{align*}
Moreover, note that~\eqref{eq-cumuexp4} contains the source term of the recursion for $\cE[\cdot]$ (cf. Lemma~\ref{lem-mEexchangemeso}), which we can combine with the terms involving $\cE[\cdot]$ in~\eqref{eq-factorreplacedgeneral}. Using~\eqref{eq-Erecursion}, the terms cancel. We conclude that~\eqref{eq-factorreplacedgeneral} evaluates to
\begin{align*}
N^{p+1}\E \Big(X^{(k_1)}_{\alpha_1}\dots X^{(k_{p+1})}_{\alpha_{p+1}}\Big)&=\sum_{r=2}^{p+1}\Big[m(z_1^{(1)})\Big(\sum_{j=1}^{k_r}\m[T_1^{(1)},\dots, T_{k_1}^{(1)},T_j^{(r)},\dots ,T_{j-1}^{(r)},G_j^{(r)}]\\
&\quad+\kappa_4\sum_{r=1}^{k_1}\sum_{s=k+1}^{k+\ell}\Big(\sum_{t=1}^s\langle M_{[r]}^{(1)}\odot M_{(s,\dots,k_i,1,\dots,t)}^{(i)}\rangle\langle (M_{[r,k]}^{(1)}A_k^{(1)})\odot M_{[t,s]}^{(i)}\rangle\NN\\
&\quad+\sum_{t=s}^{k_i}\langle M_{[r]}^{(1)}\odot M_{[s,t]}^{(i)}\rangle\langle (M_{[r,k]}^{(1)}A_k^{(1)})\odot M_{(t,\dots,k_i,1,\dots,s)}^{(i)}\rangle\Big)\NN\\
&\quad +\m[T_2^{(1)},\dots, T_{k_1-1}^{(1)},G_{k_1}^{(1)}A_{k_1}^{(1)}A_1^{(1)}|T_1^{(r)},\dots,T_{k_r}^{(r)}]\\
&\quad+\sum_{j=2}^k\m[T_1^{(1)},\dots,T_{j-1}^{(1)},G_j^{(1)}]\m[T_j^{(1)},\dots,T_{k_1}^{(1)}|T_1^{(r)},\dots,T_{k_r}^{(r)}]\NN\\
&\quad+\sum_{j=2}^k\m[T_1^{(1)},\dots,T_{j-1}^{(1)},G_j^{(1)}|T_1^{(r)},\dots,T_{k_r}^{(r)}]\m[T_j^{(1)},\dots,T_{k_1}^{(1)}]\Big)\Big]\\
&\quad\times\Big(\sum_{Q\in Pair([p+1]\setminus\{1,i\})}\prod_{(i,j)\in Q}\m[\alpha_i|\alpha_j]\Big)+\cO\Big(\frac{N^\eps}{\sqrt{N\eta_*}\ \prod_{l=1}^{p+1}\eta_*^{k_l-a_l/2}}\Big)\\
&=\sum_{Q\in Pair([p+1])}\prod_{(i,j)\in Q}\m[\alpha_i|\alpha_j]+\cO\Big(\frac{N^\eps}{\sqrt{N\eta_*}\ \prod_{l=1}^{p+1}\eta_*^{k_l-a_l/2}}\Big)
\end{align*}
where the last equation follows from~\eqref{eq-Mrecursion}. Hence,~\eqref{eq-indhyp2} also holds for $\alpha_1$ of length $k_1$. Moreover,~\eqref{eq-indhyp1} stays true if the product on the left-hand side contains $p+1$ factors, which concludes the proof of~\eqref{eq-resolventCLTmeso}.\qed

\subsection{Proof of Theorem~\ref{thm-functCLT} (Multi-Point Functional CLT)}\label{sect-functCLTproof}
 
The proof of the multi-point functional CLT in Theorem~\ref{thm-functCLT} consists of two parts. In the first step, we use Helffer-Sjöstrand representation (see~\cite{Davies1995}) to express $f_1(W)\dots f_k(W)$ as an integral of products of resolvents at different spectral parameters. This relates the linear statistics $Y_{\alpha}$ back to the resolvent chains studied in Section~\ref{sect-results1}. The second step is the computation of the leading terms, which establishes the covariance structure in~\eqref{eq-covfunctionsgeneral}.

\medskip
By eigenvalue rigidity (see e.g., \cite[Thm.~7.6]{EKYY2013} or~\cite{EYY2012}), the spectrum of $W$ is contained in $[-2-\eps,2+\eps]$ for any small $\eps>0$ with very high probability. In particular, we have $(f\cdot\chi)(W)=f(W)$ with very high probability for any smooth cutoff function $\chi$ that is, e.g., equal to one on $[-5/2,5/2]$ and equal to zero on $[-3,3]^c$. It is thus sufficient to consider $f_j\in H^p_0([-3,3])=:H^p_0$, i.e., Soboloev functions on $\R$ that are non-zero only on $[-3,3]$. Moreover, recall that every deterministic matrix $A_j$ in the product $F_{[1,k]}$ can be decomposed as $\smash{A_j=\langle A_j\rangle\Id+\mathring{A}_j}$ with $\smash{\langle\mathring{A_j}\rangle=0}$. We thus assume w.l.o.g. that the deterministic matrices $A_j$ are either traceless or equal to the identity matrix. Moreover, we restrict the following argument to the case $a=k$, i.e., all deterministic matrices are traceless, and fix $p=\lceil k/2\rceil+1$ (resp. $q=\lceil \ell/2\rceil+1$) throughout. The proof in the general case is analogous and hence omitted.

\medskip
Let $f\in H^p_0$ and define the \emph{almost analytic extension} of $f$ of order $p$ by
\begin{equation}\label{eq-almostanalytic}
f_{\C}(z)=f_{\C,p}(x+\ri\eta):=\Big[\sum_{j=0}^{p-1}\frac{(\ri\eta)^j}{j!}f^{(j)}(x)\Big]\widetilde{\chi}(N^\gamma\eta)
\end{equation}
where $\widetilde{\chi}$ is a smooth cutoff function that is equal to one on $[-5,5]$ and vanishes on $[-10,10]^c$. Note that~\eqref{eq-almostanalytic} together with~\eqref{eq-testfunct} implies the bound
\begin{equation}\label{eq-aacintbound}
\int_\R|\partial_{\overline{z}}f_{\C,p}(x+\ri\eta)|\dx x\lesssim \eta^{p-1}\|f\|_{H^p}\lesssim \eta^{p-1}N^{\gamma p}.
\end{equation}
By Helffer-Sjöstrand representation, we have 
\begin{equation}\label{eq-HSformula1}
f(\lambda)=\frac{1}{\pi}\int_\C\frac{\partial_{\overline{z}}f_{\C}(z)}{\lambda-z}\dx^2z,
\end{equation}
where $\dx^2z=\dx x\dx \eta$ denotes the Lebesgue measure on $\C\equiv\R^2$ with $z=x+\ri\eta$ and $\partial_{\overline{z}}=(\partial_x+\ri\partial_{\eta})/2$. Applying~\eqref{eq-HSformula1} for $f_1,\dots,f_k$, respectively, we obtain
\begin{align}\label{eq-HSformula2}
Y_{\alpha}^{(k,a)}&=\frac{N}{\pi^k}\int_{\C^k}\Big[\prod_{j=1}^k(\partial_{\overline{z}}(f_j)_{\C})(z_j)\Big]\Big(\langle G(z_1)\dots G(z_k)A_k\rangle-\E\langle G(z_1)\dots G(z_k)A_k\rangle\Big)\dx^2 z_{[k+\ell]}
\end{align}
with $\dx^2 z_{[k+\ell]}=\dx^2 z_1\dots\dx^2 z_{k+\ell}$. Let $c>0$ and define
\begin{displaymath}
\eta_0:=N^{-1+c}.
\end{displaymath}
We start by showing that the contribution from the regime $|\eta_j|\leq\eta_0$ for some $j\in[k]$ to the integral~\eqref{eq-HSformula2} is negligible. W.l.o.g. assume that $|\eta_j|\leq\eta_0$ only for the single index $j=1$. The general case is similar and yields an even smaller bound (cf. proof of~\cite[Thm.~2.6]{CES-thermalization}). Our key tool is the following variant of Stokes' theorem
\begin{equation}\label{eq-Stokes}
\int_{-10}^{10}\int_{\widetilde{\eta}}^{10}\partial_{\overline{z}}\Phi(x+\ri \eta)h(x+\ri \eta)\dx x\dx \eta=\frac{1}{2\ri}\int_{-10}^{10}\Phi(x+\ri\widetilde{\eta})h(x+\ri\widetilde{\eta})\dx x,
\end{equation}
which holds for any $\widetilde{\eta}\in[0,10]$, and for any $\Phi, h\in H^1(\C)\equiv H^1(\R^2)$ such that $\partial_{\overline{z}}h=0$ on the domain of integration and $\Phi$ vanishes on the left, right, and top boundary of the domain of integration. Applying~\eqref{eq-Stokes} repeatedly for the variables $z_2,\dots,z_k$ and introducing the interval notation $\dx x_{[i,j]}=\dx x_i\dx x_{i+1}\dots\dx x_{j}$ as well as $\dx\eta_{[i,j]}=\dx\eta_i\dx\eta_{i+1}\dots\dx\eta_{j}$ for $i<j$, we obtain
\begin{align*}
&\Big|\int\dx x_{[k]}\int_{\substack{|\eta_i|\geq\eta_0,\\ i\in[2,k]}}\dx\eta_{[2,k]}\int_{-\eta_0}^{\eta_0}\dx\eta_1\Big[\prod_{j=1}^k(\partial_{\overline{z}}(f_j)_{\C})(z_j)\Big]\big(\langle G(z_1)A_1\dots G(z_k)A_k\rangle\\
&\quad-\E\langle G(z_1)A_1\dots G(z_k)A_k\rangle\big)\Big|\\
&=\frac{1}{2^{k-1}}\Big|\int\dx x_{[k]}\int_{-\eta_0}^{\eta_0}\dx\eta_1(\partial_{\overline{z}}(f_1)_{\C})(z_1)\Big[\prod_{j=2}^k(f_j)_{\C}(x_j+\ri\eta_0)\Big]\\
&\quad\times\big(\langle G(z_1)A_1G(x_2+\ri\eta_0)\dots G(x_k+\ri\eta_0)A_k\rangle-\E\langle G(z_1)A_1G(x_2+\ri\eta_0)\dots G(x_k+\ri\eta_0)A_k\rangle\big)\Big|\\
&=:I_1+I_2,
\end{align*}
where $I_1$ and $I_2$ contain the $|\eta_1|\leq\eta_r:=N^{-5k}$ and the $\eta_r\leq|\eta_1|\leq\eta_0$ regime of the $\eta_1$ integration, respectively. For the smallest values of $\eta_1$, the trivial estimate
\begin{displaymath}
|\langle G(z_1)A_1\dots G(z_k)A_k\rangle|\leq\prod_j\|G(z_j)A_j\|\leq\prod_j|\eta_j|^{-1}
\end{displaymath}
together with~\eqref{eq-aacintbound} implies that
\begin{displaymath}
I_1\lesssim N^{-k}
\end{displaymath}
as the $x_j$-integral of $(f_j)_{\C}(x_j+\ri\eta_0)$ for $j\in[2,k]$ is of order one due to Assumption~\ref{as-functions}. For $I_2$, we use the bound
\begin{displaymath}
|\langle G(z_1)A_1\dots G(z_k)A_k\rangle|\prec N^{k/2-1}\prod_{j\in [k]}\frac{1}{\rho(x_i+\ri N^{-2/3})}\Big(1+\frac{1}{N|\eta_j|}\Big)
\end{displaymath}
from~\cite[Lem.~6.1]{CES-optimalLL}. Here, $\rho(z):=\pi^{-1}|\Im m(z)|$ for $z\in\C\setminus\R$ denotes the harmonic extension of the semicircle density with $\rho(x+\ri 0)=\rho_{sc}(x)$. This yields
\begin{displaymath}
I_2\prec \eta_0(N\eta_0)^{k/2}\|f_1\|_{H^p}.
\end{displaymath}
Overall, we conclude that
\begin{align}
Y_{\alpha}^{(k,a)}&=\frac{N}{\pi^k}\int_{\R^k}\dx x_{[k]}\int_{[\eta_0,10]^k}\dx\eta_{[k]}\Big[\prod_{j=1}^k(\partial_{\overline{z}}(f_j)_{\C})(z_j)\Big]\NN\\
&\quad\times\big(\langle G(z_1)A_1\dots G(z_k)A_k\rangle-\E\langle G(z_1)A_1\dots G(z_k)A_k\rangle\big)\label{eq-backtoresolv}\\
&\quad+\cO_{\prec}\Big(\eta_0(N\eta_0)^{k/2}\max_j\|f_j\|_{H^p}\Big)\NN
\end{align}
Equation~\eqref{eq-backtoresolv} now reduces the proof of the multi-point functional CLT for general $f_1,\dots,f_k$ to the CLT for resolvents in Theorem~\ref{thm-resolventCLTmeso}.

\medskip
It remains to compute the covariance structure~\eqref{eq-covfunctionsgeneral}. By~\eqref{eq-backtoresolv} and Lemma~\ref{lem-covapproxmeso}, we have
\begin{align}
&\E\Big(Y^{(k,a)}_{\alpha}Y^{(\ell,b)}_{\beta}\Big)\NN\\
&=\frac{1}{\pi^{k+\ell}}\int_{\R^k}\dx x_{[k]}\int_{[\eta_0,10]^k}\dx\eta_{[k]}\Big[\prod_{i=1}^k(\partial_{\overline{z}}(f_i)_{\C,p})(z_i)\Big]\int_{\R^\ell}\dx x_{[k+1,k+\ell]}\int_{[\eta_0,10]^\ell}\dx\eta_{[k+1,k+\ell]}\NN\\
&\quad\times\Big[\prod_{j=k+1}^\ell(\partial_{\overline{z}}(f_j)_{\C,q})(z_j)\Big]\m[G(z_1)A_1,\dots,G(z_k)A_k|G(z_{k+1})A_{k+1},\dots,G(z_{k+\ell})A_{k+\ell}]\NN\\
&\quad+\cO_{\prec}\Big(\frac{N^\eps\max_{i\in[k]}\|f_i\|_{H^p}\max_{j\in[k+1,k+\ell]}\|f_j\|_{H^q}}{\sqrt{N}}\Big)\label{eq-covfunctionsintegral}
\end{align}
where we estimated the error coming from Lemma~\ref{lem-covapproxmeso} as
\begin{align*}
&\frac{1}{\pi^{k+\ell}}\int_{\R^{k+\ell}}\dx x_{[k+\ell]}\int_{[\eta_0,10]^{k+\ell}}\dx\eta_{[k+\ell]}\Big[\prod_{j=1}^{k+\ell}(\partial_{\overline{z}}(f_j)_{\C})(z_j)\Big]\\
&\quad\times \Big(N^2\E\big[(\langle G(z_1)\dots A_k\rangle-\E\langle G(z_1)\dots A_k\rangle)(G(z_{k+1})\dots A_{k+\ell}\rangle-\E\langle G(z_{k+1})\dots A_k\rangle)\big]\\
&\quad\quad-\m[G(z_1)A_1,\dots,G(z_k)A_k|G(z_{k+1})A_{k+1},\dots,G(z_{k+\ell}) A_{k+\ell}]\Big)\\
&=\cO\Big(\frac{N^\eps\eta_0^{3/2}\max_{i\in[k]}\|f_i\|_{H^p}\max_{j\in[k+1,k+\ell]}\|f_j\|_{H^q}}{\sqrt{N}}\Big).
\end{align*}
More precisely, we considered the regime $\eta_1\leq\dots\leq\eta_k$ and $\eta_{k+1}\leq\dots\leq\eta_{k+\ell}$, as all other regimes give the same contribution by symmetry, and applied~\eqref{eq-Stokes} for the variables $i\in[2,k]$ and $j\in[k+2,k+\ell]$. Estimating the remaining $\partial_{\overline{z}}(f_1)_{\C,p}(z_1)$ and $\partial_{\overline{z}}(f_{k+1})_{\C,q}(z_{k+1})$ using~\eqref{eq-aacintbound}, we obtain a bound of order $|\eta_0|^{p+q}$. Recall that applying Lemma~\ref{lem-covapproxmeso} yields an $\smash{(\sqrt{N\eta_*}\eta_*^{k+\ell-(a+b)/2})^{-1}}$ error, where $\eta_*=\min_j\eta_j=\eta_0$ due to the choice of domain of integration.
\qed

\subsection{Proof of Corollary~\ref{cor-covarianceLL} (Limiting Covariance in Theorem~\ref{thm-functCLT})}\label{sect-bigformula}
Equation~\eqref{eq-covfunctionsprecise} is immediate from the explicit formula for $\m_{GUE}[\cdot|\cdot]$ in~\cite[Thm.~2.4]{JRcompanion}. Moreover, the proof of~\eqref{eq-phis1} identical to~\cite[Lem.~4.1]{CES-thermalization}, as the integral defining $\Phi_{\pi}$ only involves first-order free cumulants. We hence only focus on the proof of~\eqref{eq-phis2}. Abbreviate $U=U_1\cup U_2$ and note that
\begin{align*}
\Phi_{\pi_1\times\pi_2,U_1\times U_2}(f_1,\dots,f_{k+\ell})&=\frac{1}{\pi^{k+\ell}}\Big(\int_{\R^{|U|}}\int_{[\eta_0,10]^{|U|}}\Big[\prod_{j\in U}(\partial_{\overline{z}}(f_j)_{\C})(z_j)\Big]m_{\circ\circ}[U_1|U_2]\prod_{j\in U}\dx^2 z_j\Big)\\
&\quad\times\prod_{\substack{B\in\pi_1\cup\pi_2\\ not\ marked}}\Big(\int_{\R^{|B|}}\int_{[\eta_0,10]^{|B|}}\Big[\prod_{j\in B}(\partial_{\overline{z}}(f_j)_{\C})(z_j)\Big]m_{\circ}[B]\prod_{j\in B}\dx^2 z_j\Big),
\end{align*}
where we set again $\dx^2 z_j=\dx x_j\dx\eta_j$ for $z_j=x_j+\ri\eta_j$. In particular, the product in the last line can again be evaluated using~\cite[Lem.~4.1]{CES-thermalization}. It remains to compute the integral involving $m_{\circ\circ}[U_1|U_2]$. We claim that
\begin{align}
&\frac{1}{\pi^{|U|}}\int_{\R^{|U|}}\int_{[\eta_0,10]^{|U|}}\Big[\prod_{j\in U}(\partial_{\overline{z}}(f_j)_{\C})(z_j)\Big]m[U_1|U_2]\prod_{j\in U}\dx^2 z_j\NN\\
&=\mathrm{sc}[i_1,\dots,i_n|i_{n+1},\dots,i_{n+m}]+\cO\Big(\eta_0^2(N\eta_0)^{(|U|)/2}\max_{i\in[k]}\|f_i\|_{H^p}\max_{j\in[k+1,k+\ell]}\|f_j\|_{H^q}\Big)\label{eq-claimscformula}
\end{align}
where $U_1=\{i_1,\dots,i_n\}$ and $U_2=\{i_{n+1},\dots,i_{n+m}\}$. The corresponding result for $\mathrm{sc}_{\circ\circ}[\cdot|\cdot]$ is then immediate from the second-order moment-cumulant relation~\eqref{eq-mcrelation2}.

\medskip
To establish~\eqref{eq-claimscformula}, we use the explicit integral representation for $m[\cdot|\cdot]$ from Corollary~\ref{cor-mintegral} and rewrite the resulting multi-integral involving the kernel~\eqref{eq-kernel}. Let again $U:=U_1\cup U_2$. Noting that both $|x-z_j|$ and $|y-z_j|$ are bounded from below by $\eta_0$ for any $j$ and recalling the bound~\eqref{eq-aacintbound} for the almost analytic extension, we have
\begin{align*}
&\int_{[-2,2]^2}\int_{\R^{|U|}}\int_{[\eta_0,10]^{|U|}}\Big|\Big[\prod_{j\in U}(\partial_{\overline{z}}(f_j)_{\C})(z_j)\Big]\Big(\sum_{i\in U_1}\frac{1}{(x-z_i)^2}\cdot\prod_{j\neq i}\frac{1}{x-z_j}\Big)\\
&\quad\times\Big(\sum_{i\in U_2}\frac{1}{(y-z_i)^2}\cdot\prod_{j\neq i}\frac{1}{y-z_j}\Big)\frac{u(x,y)}{2}\Big|\Big[\prod_{j\in U}\dx^2 z_j\Big]\dx x\dx y<\infty
\end{align*}
for any fixed $N$, as the $z_j$ integrations are bounded by an ($N$-dependent) constant, which is integrable w.r.t. to the measure $\nu(\dx x,\dx y)=u(x,y)\dx x\dx y$ (cf.~Remark to Corollary~\ref{cor-mintegral}). Hence, Fubini's theorem allows interchanging the order of integration and move the $x$ and $y$ integrations inside. A brief calculation using~\eqref{eq-Stokes} yields
\begin{displaymath}
\int_{\R}\int_{[\eta_r,10]}(\partial_{\overline{z}}(f_j)_{\C})(z_j)\frac{1}{x-z_j}\dx^2 z_j=\pi f(x)+\cO(\eta_r)
\end{displaymath}
with $\eta_r=N^{-5k}$, and we further have
\begin{align*}
\frac{1}{\pi}\int_{\R}\int_{[\eta_0,10]}(\partial_{\overline{z}}(f_j)_{\C})(z_j)\Big(\frac{1}{x-z_j}\Big)^2\dx^2 z_j=f'(x)+\cO(\eta_r)
\end{align*}
using integration by parts. Hence,
\begin{align*}
&\frac{1}{\pi^{|U|}}\int_{\R^{|U|}}\int_{[\eta_0,10]^{|U|}}\Big[\prod_{j\in U}(\partial_{\overline{z}}(f_j)_{\C})(z_j)\Big]\\
&\quad\times\Big(\sum_{i\in U_1}\frac{1}{(x-z_i)^2}\cdot\prod_{j\neq i}\frac{1}{x-z_j}\Big)\Big(\sum_{i\in U_2}\frac{1}{(y-z_i)^2}\cdot\prod_{j\neq i}\frac{1}{y-z_j}\Big)\Big[\prod_{j\in U}\dx^2 z_j\Big]\\
&=\Big(\sum_{i\in U_1}f_i'(x)\cdot\prod_{j\neq i}f_j(x)\Big)\Big(\sum_{i\in U_2}f_i'(y)\cdot\prod_{j\neq i}f_j(y)\Big)\\
&\quad+\cO\Big(\eta_0^2(N\eta_0)^{(|U|)/2}\max_{i\in[k]}\|f_i\|_{H^p}\max_{j\in[k+1,k+\ell]}\|f_j\|_{H^q}\Big)
\end{align*}
such that~\eqref{eq-claimscformula} follows from the Leibniz rule. Recall the $[\eta_r,\eta_0]$ regime can be added back in exchange for an $\cO(\eta_0^2(N\eta_0)^{(|U|)/2}\max_{i\in[k]}\|f_i\|_{H^p}\max_{j\in[k+1,k+\ell]}\|f_j\|_{H^q})$ error. This yields~\eqref{eq-claimscformula}, which concludes the proof of~\eqref{eq-phis2}.
\qed

\subsection{Proof of Corollary~\ref{cor-thermalization} (Application to Thermalization)}\label{app-thermalization}
Throughout the proof, we set $\smash{\widetilde{f}_j(x)=\re^{\ri t_jx}\chi(x)}$ with a symmetric smooth cutoff function~$\chi$ that is equal to one on $[-5/2,5/2]$ and equal to zero on $[-3,3]^c$. By eigenvalue rigidity, the functions $\smash{\widetilde{f}_j(W)}$ and $f_j(W)=\re^{\ri t_jW}$ coincide with high probability and we can use them interchangeably. The deterministic approximation as well as the Gaussian fluctuations around it are now immediate from~\cite[Cor.~2.7]{CES-optimalLL} and Theorem~\ref{thm-functCLT}, respectively. Note that we use the multi-point functional CLT for the macroscopic regime due to $t\in\R$ being $N$-independent. The limiting variance can be read off from Corollary~\ref{cor-covarianceLL}. Observing that
\begin{displaymath}
\overline{\langle A_1(t)A_2\rangle}=\langle \re^{\ri tW}A_1^*\re^{-\ri tW}A_2^*\rangle
\end{displaymath}
we set $f_1(x)=\re^{\ri tx}$, $f_2(x)=\re^{-\ri tx}$, $f_3(x)=\re^{\ri tx}$, and $f_4(x)=\re^{-\ri tx}$ as well as $A_3=A_1^*$, and $A_4=A_2^*$ to apply~\eqref{eq-covfunctionsprecise}. As $A_1$ and $A_2$ are assumed to be traceless, only the terms corresponding to the permutations
\begin{equation}\label{eq-NCAcontributions}
\pi\in\{(14)(23),(13)(24),(1)(24)(3),(14)(2)(3),(1)(24)(3),(13)(2)(4)\}\subset\NCA(2,2)
\end{equation}
as well as the terms corresponding to the marked partitions
\begin{align}
\pi\in&\Big\{\big\{\{\underline{1}\},\{2\}\big\}\times\big\{\{\underline{3}\},\{4\}\big\},\big\{\{\underline{1}\},\{2\}\big\}\times\big\{\{3\},\{\underline{4}\}\big\},\big\{\{1\},\{\underline{2}\}\big\}\times\big\{\{\underline{3}\},\{4\}\big\},\NN\\
&\quad\big\{\{1\},\{\underline{2}\}\big\}\times\big\{\{3\},\{\underline{4}\}\big\}\Big\}\label{eq-markedcontributions}
\end{align}
contribute to the limiting variance of $\langle A_1(t)A_2\rangle$. Note that the marked blocks in~\eqref{eq-markedcontributions} are distinguished by underlining.

\medskip
It remains to discuss the $t\rightarrow\infty$ limit. We have
\begin{displaymath}
\int_{-2}^2\re^{\ri t x}\rho_{sc}(x)\dx x=\frac{J_1(2t)}{t}
\end{displaymath}
where $J_1$ is a Bessel function of the first kind obeying the asymptotics
\begin{displaymath}
J_1(x)=-\cos\Big(x+\frac{\pi}{2}\Big)\sqrt{\frac{2}{\pi x}}+\cO\Big(\frac{1}{x^{3/2}}\Big), \quad x\gg1.
\end{displaymath}
In particular,
\begin{displaymath}
\mathrm{sc}_\circ[1]=\mathrm{sc}[1]=\frac{J_1(2t)}{t}=\cO\Big(\frac{1}{t^{3/2}}\Big),\quad t\gg1.
\end{displaymath}
Hence, it readily follows that the term corresponding to $\langle A_1A_3\rangle \langle A_2A_4\rangle=\langle |A_1|^2\rangle\langle |A_2|^2\rangle$ is the largest among the contributions from $\NCA(2,2)$ for large $t$, giving
\begin{displaymath}
\mathrm{sc}_{\circ}[1,4]\mathrm{sc}_{\circ}[2,3]=\Big(1-\frac{J_1(2t)J_1(2t)}{t^2}\Big)\Big(1-\frac{J_1(2t)J_1(2t)}{t^2}\Big)=1+\cO\Big(\frac{1}{t^3}\Big),\quad t\gg1,
\end{displaymath}
where we also used the symmetry $J_1(-x)=-J_1(x)$. Moreover, we obtain that, e.g.,
\begin{displaymath}
\mathrm{sc}_{\circ}[1]\mathrm{sc}_{\circ}[2,3]\mathrm{sc}_{\circ}[4]=\cO\Big(\frac{1}{t^3}\Big),\quad t\gg1,
\end{displaymath}
with the remaining permutations in~\eqref{eq-NCAcontributions} yielding contributions of comparable or lower order in the $t\rightarrow\infty$ limit.

\medskip
Lastly, we consider the marked partitions in~\eqref{eq-markedcontributions}, which correspond to the term of~$\Var[\xi]$ that contains $\langle A_1A_2\rangle\langle A_3A_4\rangle=|\langle A_1A_2\rangle|^2$. As we work in the macroscopic regime of Theorem~\ref{thm-functCLT}, Equation~\eqref{eq-defsc2} coincides with the limiting covariance structure of the CLT in~\cite[Sect.~2]{LytovaPasturCLT}. Hence, we obtain, e.g.,
\begin{equation}\label{eq-calculationcc}
\mathrm{sc}[1|4]=\frac{1}{2\pi^2}\int_{-2}^2\int_{-2}^2\frac{1-\cos(t(x-y))}{(x-y)^2}\frac{4-xy}{\sqrt{4-x^2}\sqrt{4-y^2}}\dx x\dx y.
\end{equation}
In particular, the cutoff $\chi$ does not enter the computation. Note that the expressions for $\mathrm{sc}[1|4]$ and $\mathrm{sc}[2|3]$ resp. $\mathrm{sc}[1|3]$ and $\mathrm{sc}[2|4]$ yield the same contribution by symmetry. The integral on the right-hand side of~\eqref{eq-calculationcc} is finite, however, it will grow with $t$ as $t\rightarrow\infty$. To identify the asymptotics, we distinguish between the contributions of the bulk regime
\begin{displaymath}
\frac{4-xy}{\sqrt{4-x^2}\sqrt{4-y^2}}=\cO(1),
\end{displaymath}
and the edge regime where the denominator $(\sqrt{4-x^2}\sqrt{4-y^2})^{-1}$ becomes singular. As
\begin{displaymath}
\lim_{t\rightarrow\infty}\frac{1}{t}\int_{-2}^2\int_{-2}^2\frac{1-\cos(t(x-y))}{(x-y)^2}=4\pi,
\end{displaymath}
the contribution of the bulk is readily identified to be $\cO(t)$. In the edge regime, we expand the square root in the denominator and further consider the contributions around the diagonal ($|x-y|\lesssim t^{-1}$) and away from it separately whenever $x$ and $y$ are close to the same value. This also yields a bound of order $\cO(t)$, implying that $\mathrm{sc}[1|4]=\cO(t)$. Recalling the identity $\mathrm{sc}_{\circ\circ}[1|4]=\mathrm{sc}[1|4]-\mathrm{sc}_\circ[1,4]$ from~\eqref{eq-mcrelation2}, we obtain
\begin{displaymath}
\mathrm{sc}_{\circ\circ}[1|4]\mathrm{sc}_{\circ}[2]\mathrm{sc}_{\circ}[3]=\cO\Big(\frac{1}{t^2}\Big), \quad t\gg1.
\end{displaymath}
The other marked partitions in~\ref{eq-markedcontributions} give rise to terms of comparable order. Summing up all contributions yields~\eqref{eq-varlimit}. The proof of~\eqref{eq-covlimit} is analogous and hence omitted.
\qed

\appendix
\section{Proof of Corollary~\ref{cor-Eproperties} (Meta Argument)}\label{app-meta}
Recall from Lemma~\ref{lem-mEexchangemeso} that
\begin{displaymath}
\E\langle T_1\dots T_k\rangle=\m[T_1,\dots,T_k]+\frac{\kappa_4}{N}\cE[T_1,\dots,T_k]+\cO\Big(\frac{N^\eps}{N\, \sqrt{N\eta_*}\ \eta_*^{k-a/2}}\Big),
\end{displaymath}
i.e., $\cE[T_1,\dots,T_k]$ constitutes the first subleading term of $\E\langle T_1\dots T_k\rangle$. In particular, we have
\begin{equation}\label{eq-metacandidate}
N(\E\langle T_1\dots T_k\rangle-\m[T_1,\dots,T_k])=\kappa_4\cE[T_1,\dots,T_k]+\cO\Big(\frac{N^\eps}{\sqrt{N\eta_*}\ \eta_*^{k-a/2}}\Big),
\end{equation}
where the quantity on the left-hand side of~\eqref{eq-metacandidate} satisfies the properties stated in Corollary~\ref{cor-Eproperties}. For $\E\langle T_1\dots T_k\rangle$, this is immediate from the cyclicity of the trace and the resolvent identity $G_kG_1=\frac{G_k-G_1}{z_k-z_1}$, while the corresponding properties for $\m[\cdot]$ follow from~\eqref{eq-defM} and~\cite[Lem.~5.4]{CES-thermalization}. Note that~\eqref{eq-Edivdif2} is a special case of~\eqref{eq-Edivdif1} and that~\eqref{eq-Edivdif3} is obtained by iterating~\eqref{eq-Edivdif2}. Once the formula~\eqref{eq-Edivdif3} is established, the permutation symmetry readily follows from the divided difference structure. Hence, Corollary~\ref{cor-Eproperties}(iii) follows from~(i) and~(ii).

\medskip
It remains to show that $\cE[\cdot]$ satisfies the same cyclicity and divided difference properties as the quantity on the left-hand side of~\eqref{eq-metacandidate}. Note that simply taking the $N\rightarrow\infty$ limit and applying Lemma~\ref{lem-mEexchangemeso} is not sufficient if $A_j\neq\Id$ for some $j$, as the deterministic matrices are themselves $N$-dependent quantities. Instead, let $L\in\N$ and consider the $NL\times NL$ Wigner matrix $\mathscr{W}$ as well as the deterministic matrices $\mathscr{A}_1,\dots,\mathscr{A}_k\in\C^{NL\times NL}$. Here, $\mathscr{W}$ is defined using the same random variables $\chi_d,\chi_{od}$ as~$W$ (i.e., $\sqrt{N}W$ and $\sqrt{NL}\mathscr{W}$ have the same entry distribution) and we define
\begin{displaymath}
\mathscr{A}_j:=A_j\otimes \Id_{L\times L},\quad j=1,\dots,k,
\end{displaymath}
where $\otimes$ denotes the tensor product, i.e.,
\begin{displaymath}
\mathscr{A}_j=\begin{pmatrix}(A_j)_{11}\Id_{L\times L}&\cdots &(A_j)_{1N}\Id_{L\times L}\\ \vdots &\ddots&\vdots\\  (A_j)_{N1}\Id_{L\times L}&\cdots &(A_j)_{NN}\Id_{L\times L}\end{pmatrix}.
\end{displaymath}
Next, let $\mathscr{G}_j:=(\mathscr{W}-z_j)^{-1}$ and $\mathscr{T}_j:=\mathscr{G}_j\mathscr{A}_j$ for $j=1,\dots,k$ and denote by $\curlym[\mathscr{T}_1,\dots,\mathscr{T}_k]$ the deterministic approximation of $\langle \mathscr{T}_1\dots \mathscr{T}_k\rangle$. As both $\mathscr{W}$ and $W$ are Wigner matrices and $\langle \mathscr{A}_i\mathscr{A}_j\rangle=\langle (A_iA_j)\otimes\Id_{L\times L}\rangle=\langle A_iA_j\rangle$ by definition of the tensor product, it follows from the closed form of $\m[\cdot]$ in~\cite[Thm.~2.6]{CES-thermalization} that
\begin{equation}\label{eq-curlyaresame}
\curlym[\mathscr{T}_1,\dots,\mathscr{T}_k]=\m[T_1,\dots,T_k].
\end{equation}
Similarly, the $1/N$ error term of $\E\langle \mathscr{T}_1\dots \mathscr{T}_k\rangle$ is given by $\cE[T_1,\dots,T_k]$, since~\eqref{eq-curlyaresame} ensures that we obtain the same recursion from Definition~\ref{def-E}.

\medskip
We thus conclude that
\begin{align*}
&|\cE[T_1,\dots,T_k]-\cE[T_2,\dots,T_k,T_1]|\\
&\leq |\cE[T_1,\dots,T_k]-N(\E\langle \mathscr{T}_1\dots \mathscr{T}_k\rangle-\curlym[\mathscr{T}_1,\dots,\mathscr{T}_k])|\\
&\quad+|\cE[\mathscr{T}_2,\dots,\mathscr{T}_k,\mathscr{T}_1]-N(\E\langle \mathscr{T}_2\dots \mathscr{T}_k,\mathscr{T}_1\rangle-\curlym[\mathscr{T}_2,\dots,\mathscr{T}_k,\mathscr{T}_1])|\\
&=\cO\Big(\frac{(NL)^\eps}{\sqrt{NL\eta_*}\ \eta_*^{k-a/2}}\Big)
\end{align*}
by Lemma~\ref{lem-mEexchangemeso} for any $\eta_*\gg (NL)^{-1}$. Letting $L\rightarrow\infty$ while keeping all other parameters $N,z_1,\dots,z_k,A_1,\dots,A_k$ fixed yields
\begin{displaymath}
\cE[T_1,\dots,T_k]=\cE[T_2,\dots,T_k,T_1],
\end{displaymath}
i.e., $\cE[\cdot]$ is cyclic as claimed in part (a) of Corollary~\ref{cor-Eproperties}. Similarly, we obtain that
\begin{displaymath}
\Big|\cE[T_1,\dots,T_{k-1},G_k]-\frac{\cE[T_2,\dots,T_{k-1},G_kA_1]-\cE[T_1,\dots,T_{k-1}]}{z_k-z_1}\Big|=\cO\Big(\frac{(NL)^\eps}{\sqrt{NL\eta_*}\ \eta_*^{k-a/2}}\Big)
\end{displaymath}
whenever $z_1\neq z_k$ and $A_k=\Id$, which implies~\eqref{eq-Edivdif1}. Recalling that (i) and (ii) of Corollary~\ref{cor-Eproperties} imply (iii), the proof is complete.
\qed

\renewcommand*{\bibname}{References}

\let\oldthebibliography\thebibliography
\let\endoldthebibliography\endthebibliography
\renewenvironment{thebibliography}[1]{
  \begin{oldthebibliography}{#1}
    \setlength{\itemsep}{0.5em}
    \setlength{\parskip}{0em}
}
{
  \end{oldthebibliography}
}

\bibliographystyle{plain}
\bibliography{References}

\end{document}